\newcommand{\mb}{\mathbb}
\newcommand{\mc}{\mathcal}
\newcommand{\mr}{\mathrm}
\newcommand{\cv}{\nabla}
\newcommand{\ve}{\varepsilon}
\newcommand{\vp}{\varphi}
\newcommand{\lp}{\langle}
\newcommand{\rp}{\rangle}
\newcommand{\ten}{\otimes}
\newcommand{\KN}{\mathbin{\bigcirc\mspace{-15mu}\wedge\mspace{3mu}}}
\newcommand{\mat}{\begin{pmatrix}}
\newcommand{\rix}{\end{pmatrix}}
\def \blue {\color{blue}}
\DeclareMathOperator{\Rc}{Rc}
\DeclareMathOperator{\Rm}{Rm}
\newtheorem{theorem}{Theorem}
\newtheorem{lemma}[theorem]{Lemma}
\newtheorem{corollary}[theorem]{Corollary}
\newtheorem{remark}[theorem]{Remark}
\newtheorem{assumpA}{Assumption}
\begin{document}


\title[Compact multiply warped Ricci flow solutions]{Local singularities of compact multiply warped Ricci flow solutions}

\author{James Isenberg}
\address[James Isenberg]{University of Oregon}
\email{isenberg@uoregon.edu}
\urladdr{http://www.uoregon.edu/$\sim$isenberg/}

\author{Dan Knopf}
\address[Dan Knopf]{University of Texas at Austin}
\email{danknopf@math.utexas.edu}
\urladdr{http://www.ma.utexas.edu/users/danknopf}

\author{Zilu Ma}
\address[Zilu Ma]{Rutgers University}
\email{zm227@math.rutgers.edu}

\author{Nata\v sa \v Se\v sum}
\address[Nata\v sa \v Se\v sum]{Rutgers University}
\email{natasas@math.rutgers.edu}
\urladdr{http://www.math.rutgers.edu/$\sim$natasas/}

\begin{abstract}
We demonstrate that any four-dimensional shrinking Ricci soliton $(\mc B \times {\mb S^2}, g)$, where $\mc B$ is any two-dimensional complete noncompact surface 
and $g$ is a warped product metric over the base $\mc B$, has to be isometric to the generalized cylinder $\mb R^2\times\mb S^2$ equipped with the standard 
cylindrical metric. After completing this classification, we study Ricci flow solutions that are multiply warped products --- but not products --- and provide rigorous 
examples of the formation of generalized cylinder singularity models $\mb R^k\times\mb S^\ell$.
\end{abstract}

\thanks{DK thanks the Simons Foundation for support from Award 635293. N\v S thanks the NSF for support in DMS 2105508.}

\maketitle

\tableofcontents

\section{Introduction}
Our goals in this paper are to {\bf (i)} classify four-dimensional shrinking Ricci solitons $(\mc B^2 \times \mb S^2, g)$, where $\mc B^2$ is a general two-dimensional complete noncompact surface, and $g$ is a warped product metric over $\mc B^2$, and then {\bf (ii)} study the formation of Ricci flow singularity models that are generalized cylinders of the form $\mb R^k\times\mb S^\ell$, with $k\geq1$
and $\ell\geq2$. While these models are expected to occur and arise trivially from product solutions, we provide rigorous analyses of singularities that form from
solutions that are not products. Our motivations are as follows.

Ricci solitons, manifolds $(\mc M,g)$ with $\Rc[g]+\lambda g +\frac12 \mc L_X g=0$, are generalized stationary solutions that frequently arise as singularity
models of the flow, that is, as limits of parabolic dilations of finite-time singularities. Indeed, for Type-I singularities, which are conjecturally generic for compact
solutions and satisfy $\sup(T-t)|\Rm|<\infty$, one can by~\cite{EMT11} always extract a nontrivial gradient shrinking soliton limit; \emph{i.e.,} one with $\lambda<0$
and $X=\mr{grad}(f)$ for a nonconstant potential function $f$.

For mean curvature flow (MCF), a series of influential papers by Colding and Minicozzi starting with~\cite{CM12} prove that the only stable shrinking MCF
solitons are generalized cylinders $\mb R^k\times\mb S^\ell$. For Ricci flow, somewhat less is known, but the reader should consult the work in the recent~\cite{CM22}.
In any case, generalized Ricci flow cylinders $\mb R^k\times\mb S^\ell$, with $k\geq1$ and $\ell\geq2$, form an important subclass of shrinking soliton
singularity models.

Of these Ricci flow limits, almost all rigorous non-product examples studied to date are neckpinches for which $k=1$. Our intent in this work is to 
provide complementary analyses of singularity formations that yield limits with $k\geq2$. To do so, we employ a multiply warped product \emph{Ansatz}
used previously in~\cite{CIKS22}.

\subsection{Summary of our main results}

In Section \ref{sec-Uhlenbeck}, we classify warped product four-dimensional Ricci flow shrinkers on $\mc B^2\times \mb S^2$. 

 \begin{theorem}
\label{thm-classification}
Let $(\mathcal{B}^2\times\mathbb{S}^2, g)$ be a noncompact and nonflat shrinking Ricci soliton, where $g = \check g + v^2 g_{\mathbb{S}^2}$ and
$v : \mathcal{B}^2\to (0,\infty)$. Then $(\mathcal{B}^2\times\mathbb{S}^2, g)$ is isometric to the {\bf bubble sheet} (generalized cylinder
$\mathbb{R}^2\times\mathbb{S}^2$ with the standard cylindrical metric).
\end{theorem}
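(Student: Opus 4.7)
The plan is to reduce the soliton equation to a coupled PDE system on the base, show the warping function $v$ must be constant (so $g$ becomes a genuine product), and then invoke the classification of two-dimensional noncompact shrinkers. Averaging $f$ over the $\mathsf{O}(3)$-action on the $\mb S^2$-fibers lets us take $f$ to depend only on base coordinates. Applying the standard warped-product Ricci formulas to $g=\check g+v^2 g_{\mb S^2}$, the shrinker equation $\Rc(g)+\nabla^2 f=\tfrac12 g$ decouples into a base equation
\begin{equation*}
\Rc(\check g)+\check\nabla^2 f=\tfrac12\check g+\tfrac{2}{v}\check\nabla^2 v,
\end{equation*}
together with a fiber equation that simplifies to $\check\Delta_f(v^2)=2-v^2$, where $\check\Delta_f:=\check\Delta-\langle\check\nabla f,\check\nabla\cdot\rangle$ is the drift Laplacian on $\mc B$.

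Setting $\phi:=\log v$ and $F:=f-2\phi$, the base equation rewrites as
\begin{equation*}
\Rc(\check g)+\check\nabla^2 F=\tfrac12\check g+2\,d\phi\otimes d\phi,
\end{equation*}
so the Bakry--\'Emery Ricci tensor is bounded below by $\tfrac12\check g$, while the fiber equation becomes $\check\Delta_F\phi=e^{-2\phi}-\tfrac12$. Since in two dimensions $\Rc(\check g)$ is pure trace, the traceless part of the base equation yields the pointwise identity $(\check\nabla^2 F)_0=2\,(d\phi\otimes d\phi)_0$, aligning the anisotropy of $\check\nabla^2 F$ with $\check\nabla\phi$.

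The heart of the argument is to show $\phi\equiv\phi_0:=\tfrac12\log 2$. Using the Cao--Zhou quadratic growth of $f$ on complete shrinkers to justify weighted integration by parts against $e^{-F}\,dA$, the plan is to pair the fiber equation in the form $\check\Delta_F\phi=e^{-2\phi}-e^{-2\phi_0}$ with the weighted Bochner identity for $\phi$ and use the Bakry--\'Emery bound together with the traceless-Hessian alignment to extract enough positivity to force $\check\nabla\phi\equiv 0$. The fiber equation then pins the constant value to $\phi_0$, so $v\equiv\sqrt 2$ and $g$ is a genuine product. The base equation reduces to the standard shrinker equation $\Rc(\check g)+\check\nabla^2 f=\tfrac12\check g$ on $\mc B^2$, and the Hamilton--Chen classification of complete noncompact two-dimensional gradient shrinkers forces $(\mc B,\check g)$ to be flat $\mb R^2$ with the Gaussian potential; hence $(\mc B\times\mb S^2,g)$ is isometric to the bubble sheet.

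The main obstacle is the rigidity step. A naive maximum principle applied to $v^2-2$ does not suffice: the equation $\check\Delta_f(v^2-2)=-(v^2-2)$ admits nontrivial eigenfunctions of the drift Laplacian (for instance, the quadratic Hermite polynomial $x_1^2/2-1$ on the Gaussian $\mb R^2$), and these are ruled out as admissible values of $v^2$ only after coupling the fiber equation with the base equation and using the global positivity $v>0$. The delicate point is closing the integral argument globally on a complete noncompact two-surface $\mc B$ whose topology and asymptotic geometry are unknown \emph{a priori}; this is where I expect to need the Uhlenbeck-type formalism suggested by the section title.
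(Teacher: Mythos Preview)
Your preliminary reductions are correct and match the paper's setup: the base and fiber equations you derive coincide with the paper's identities~\eqref{BT} and~\eqref{FS}, and your traceless-Hessian observation $(\check\nabla^2 F)_0 = 2(d\phi\otimes d\phi)_0$ is closely related to the identity~\eqref{Nicer Id}. However, as you yourself flag, the proposal stops short of a proof: the rigidity step ``pair the fiber equation with the weighted Bochner identity \dots\ to extract enough positivity to force $\check\nabla\phi\equiv 0$'' is not carried out, and in fact a direct Bochner computation shows
\[
\tfrac12\check\Delta_F|\check\nabla\phi|^2
= |\check\nabla^2\phi|^2 + \big(\tfrac12 - 2e^{-2\phi}\big)|\check\nabla\phi|^2 + 2|\check\nabla\phi|^4,
\]
so the sign is bad wherever $v^2<4$, and without a lower bound on $v$ there is no evident positivity to extract. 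Your remark about eigenfunctions of $\check\Delta_f$ at eigenvalue $-1$ pinpoints exactly why a soft argument cannot work.

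What the paper supplies in place of this is a sequence of sharp pointwise constraints coming from the Uhlenbeck decomposition of the curvature operator. First it shows $\check R\geq 0$ on the associated ancient flow via Chen's argument (Lemma~\ref{lemma-base-R2}), then splits into $\check R\equiv 0$ (easy: $\check\nabla^2 v=0$ forces $v$ affine on flat $\mb R^2$, hence constant) versus $\check R>0$ everywhere. In the latter case the key new input is the pointwise algebraic identity $a_2b_1=0$ of Lemma~\ref{lemma-useful}, equivalently: at every point either $\check\Delta v=0$ or $\check R=(1-|\check\nabla v|^2)/v^2$. This dichotomy, together with~\eqref{Nicer Id}, is what makes the coefficient $P$ in the differential inequality for $G:=\sqrt{h}/\check R$ (with $h$ the squared norm of the traceless part of $v^{-1}\check\nabla^2 v$) strictly negative. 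The cutoff integration you anticipate, with weight $e^{-f}\check R^2$, then forces $G\equiv 0$; Tashiro's theorem and the further identity $\langle\check\nabla f,\check\nabla v\rangle=0$ finish the job. None of these steps --- the $\check R\geq 0$ input, the $a_2b_1=0$ constraint, the choice of $G$ with the $\check R$ denominator, or the appeal to Tashiro --- appears in your outline, and they are precisely what closes the gap you identified.
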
 

Next we focus on constructing nontrivial examples of multiply warped product Ricci flows which develop as singularity models in the form of generalized cylinders.
In Section~\ref{Generalized cylinders-1}, we prove the following result for multiply warped product metrics over the base $\mb S^1$.
Essentially, our Main Theorem~\ref{Main-1} demonstrates that nontrivial multiply warped product metrics do, under very mild hypotheses,
develop neckpinch singularities that are qualitatively similar to those analyzed in~\cite{Simon}, \cite{AK04}, \cite{AK07}, \cite{IKS16}, and elsewhere.

\begin{theorem} 	\label{Main-1} Let
$\big(\mc M = \mb S^1\times \mb S^{n_1}_1\times\mb S^{n_2}_2\times\cdots\times\mb S^{n_A}_A,g=\check g+\sum_a v_a^2\,\hat g_{\mb S^{n_a}_a}\big)$
be a Ricci flow solution originating from initial data satisfying
Assumptions~\ref{single fiber pinching}, \ref{guarantee cylinder}, and~\ref{small gradient}, as stated below, with all $n_a\geq2$

Then there is a single fiber, which we may without loss of generality take to be $(\mb S^{n_1}_1,v_1^2\,\hat g_{\mb S^{n_1}_1})$, that becomes singular first.

The solution $g(t)$ develops a Type-I singularity at $T<\infty$ for which the singularity limit is $\big(\mb R^{n_2+\cdots+n_A+1}\times\mb S^{n_1}, g_\infty)$.

The metric $g_\infty$ is a product of the flat Euclidean factor $\mb R^{n_2+\cdots+n_A}$ with the product metric (Gaussian soliton)
on $\mb R^1\times\mb S^{n_1}$ that models the Ricci flow neckpinch.

There exist constants $0<\delta<C<\infty$ such that the
radius $v_1$ of the smallest sphere at distance $\sigma$ from the neckpinch
is bounded from above by
\[
 v_1\leq \sqrt{2(n-1)(T-t)}+\frac{C\sigma^2}{-\log(T-t)\sqrt{T-t}}
\]
for $|\sigma|\leq 2\sqrt{-(T-t)\log(T-t)}$, and by
\[
  v_1\leq  C\frac\sigma{\sqrt{-\log (T-t)}}
           \sqrt{\log\frac\sigma{\sqrt{-(T-t)\log(T-t)}}}
\]
for $2\sqrt{-(T-t)\log(T-t)}\leq\sigma\leq(T-t)^{\frac12-\delta}$.

\end{theorem}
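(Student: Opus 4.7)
The plan is to adapt the neckpinch analysis of~\cite{AK04, AK07, IKS16} to the multiply warped setting introduced in~\cite{CIKS22}. First I would derive the coupled parabolic system for $(\check g, v_1,\ldots,v_A)$ in arc-length parametrization $\sigma$ on $\mathbb{S}^1$. Each $v_a$ satisfies a scalar quasilinear heat equation whose leading reaction term is $-(n_a-1)v_a^{-1}(1-(\partial_\sigma v_a)^2)$, with cross-coupling through $\sum_b n_b v_b^{-1}(\partial_\sigma v_b)$ factors coming from the Ricci curvature of the total space, while $\check g$ evolves by its own Ricci-type equation sourced by $v_a^{-1}\partial_\sigma^2 v_a$. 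Under Assumption~\ref{single fiber pinching}, $v_1$ is distinguished by having a strictly smallest global minimum. Applying the maximum principle to $v_1$ and comparing the differences $v_a^2-v_1^2$ at their interior extrema, I would show that this ordering is preserved by the flow and $\min v_1\searrow 0$ at some finite $T$, while the remaining $v_a$ with $a\geq 2$ stay uniformly bounded away from $0$ on a neighborhood of the pinching locus; Assumption~\ref{guarantee cylinder} is what secures the latter.

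Next I would establish the Type-I estimate $\sup(T-t)|\Rm|<\infty$. Assumption~\ref{small gradient}, preserved by a maximum-principle argument applied to $(\partial_\sigma v_a)^2$, keeps each $(\partial_\sigma v_a)^2<1$; together with the lower bounds on $v_a$ for $a\geq 2$, this shows that the Ricci curvature is dominated near the neck by $v_1^{-2}$. A Bernstein-type estimate for $(\partial_\sigma v_1)^2$ tightens this to the sharp cylindrical rate. By~\cite{EMT11}, a subsequence of parabolic dilations centered at the minimum of $v_1$ converges to a nontrivial gradient shrinker. In the limit, each $v_a$ with $a\geq 2$ becomes constant in rescaled coordinates, so its sphere factor unfolds isometrically to Euclidean $\mathbb{R}^{n_a}$, and the base $\mathbb{S}^1$ unwraps to $\mathbb{R}$. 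What remains is a one-dimensional base carrying a warped $\mathbb{S}^{n_1}$ whose radius is forced by the cylindrical comparison to be constant, yielding the announced limit $\mathbb{R}^{n_2+\cdots+n_A}\times\bigl(\mathbb{R}\times\mathbb{S}^{n_1}\bigr)$ with the stated Gaussian soliton structure.

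The quantitative bounds on $v_1$ come from a matched asymptotic analysis in three zones, following the barrier constructions of~\cite{AK04, IKS16}. In the inner parabolic region $|\sigma|\ls\sqrt{-(T-t)\log(T-t)}$, I would verify that the candidate profile $\sqrt{2(n_1-1)(T-t)}+\frac{C\sigma^2}{-\log(T-t)\sqrt{T-t}}$ is a supersolution of the PDE linearized around the cylinder, up to coupling errors controlled by $v_a^{-2}$ with $a\geq 2$. In the intermediate zone $\sqrt{-(T-t)\log(T-t)}\ls\sigma\ls(T-t)^{1/2-\delta}$, a barrier of the stated logarithmic form is verified analogously, and the two barriers are matched on the common parabolic boundary.

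The main obstacle is showing that the coupling terms coming from $v_a$ and $\partial_\sigma v_a$ with $a\geq 2$ remain genuinely lower-order perturbations throughout the entire window $0<|\sigma|\ls(T-t)^{1/2-\delta}$, uniformly in $t$. Assumption~\ref{small gradient} is designed for exactly this purpose, and the crux of the argument is to propagate it forward in time with sufficient precision to close the barrier estimates across the interfaces between the three zones. Here the multiply warped setting genuinely extends the single-warping analyses in the literature, and the bookkeeping at the matching interfaces $|\sigma|\asymp\sqrt{-(T-t)\log(T-t)}$ and $|\sigma|\asymp(T-t)^{1/2-\delta}$ is where the delicate estimation lies.
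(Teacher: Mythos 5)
Your high-level plan---adapt the Angenent--Knopf neckpinch machinery of~\cite{AK04} and its descendants to the multiply warped setting, extract a shrinker limit via~\cite{EMT11}, split off Euclidean factors from the large fibers, and use barriers in matched zones to pin down the profile---is in line with the paper's strategy. However, there is a concrete gap at the very first quantitative step, and a secondary confusion about what the assumptions are doing.

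The central missing idea is how the Type-I lower bound $\min v_a(\cdot,t)\geq c\sqrt{T-t}$ is actually proved. You invoke ``a Bernstein-type estimate for $(\partial_\sigma v_1)^2$'' to tighten $|\Rm|\lesssim v_1^{-2}$ to ``the sharp cylindrical rate,'' but this does not hold up: a Bernstein estimate controls derivatives of $v_1$ in terms of $v_1$ itself, not a lower bound on $v_1$ in terms of $T-t$. Without such a lower bound you cannot even establish $(T-t)|\Rm|\leq C$, so~\cite{EMT11} is not available and no shrinker limit can be extracted. The paper achieves this bound by a quite different argument: it considers the single quantity $P=v_1^{n_1}v_2^{n_2}\cdots v_A^{n_A}$, computes $\frac{\mr d}{\mr dt}P_{\min}$, and crucially feeds in the preserved lower bound $R\geq r_0$ on the \emph{total} scalar curvature (this is what Assumption~\ref{guarantee cylinder} is really for, together with $\mu_a\geq n_a-1$ and the smallness of $v_1(\cdot,0)$). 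At a spatial minimum of $\log P$ the cross-couplings $\sum_{b\neq a}n_an_b\frac{(v_a)_s(v_b)_s}{v_av_b}$ can be resolved because $\sum_a n_a(v_a)_s/v_a=0$ there; this collapses equation~\eqref{1-dim scalar} into an ordinary differential inequality $\frac{\mr d}{\mr dt}P_{\min}\leq -c_3 P_{\min}^{1-2/n_1}$, which integrates to $v_1\geq c\sqrt{T-t}$. Nothing in your proposal suggests this aggregation trick, and your stated use of Assumption~\ref{guarantee cylinder} (to keep $v_a$, $a\geq 2$, bounded below) is misattributed: that role belongs to Assumption~\ref{single fiber pinching}, via the comparison $\frac{(v_a^2)_{\min}(0)}{2\mu_a}\geq\frac{(v_1^2)_{\max}(0)}{\mu_1}$ and the simple ODE bounds of Lemma~\ref{lemma-initial}, not to a preserved-ordering maximum-principle argument on $v_a^2-v_1^2$.

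Once the Type-I bound and the factor-splitting of Theorem~\ref{shrinker-limit} are in place, you still need to show the residual $(1+n_1)$-dimensional shrinker is the round cylinder and to obtain the quantitative profile. The paper does both through the auxiliary quantity $L=v_1(v_1)_{ss}\log v_1$, bounded from below in the neck region $\Omega$ (Theorem~\ref{Neck}). This gives $|\kappa_0|=|(v_1)_{ss}/v_1|\leq C/(v_1^2|\log v_1|)$, which vanishes in the rescaled limit and then feeds directly into the inner/intermediate barrier matching of~\cite[Section 9]{AK04}. Your proposal gestures at barriers but never identifies this quantity, and verifying the computation $\partial_t L = \Delta L + \cdots$ in the multiply warped case --- including the cross-fiber terms $\sum_{b>1}(v_b)_s(v_b)_{ss}/v_b^2$, which require the Hessian bounds of Section~\ref{Elementary estimates} --- is where the real work is; listing the zones and citing~\cite{AK04, IKS16} is not a substitute for showing that the coupling from $v_b$, $b\geq 2$, enters $\mc N$ only through terms bounded by $C(T-t)^{-1/2}$, which is integrable.
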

\medskip

In Section~\ref{Generalized cylinders-2}, we prove an analogous result for multiply warped products over any closed two-dimensional base.

\begin{theorem} 	\label{Main-2} Let
$\big(\mc M = \mc B^2\times \mb S^{n_1}_1\times\mb S^{n_2}_2\times\cdots\times\mb S^{n_A}_A,g=\check g+\sum_a v_a^2\,\hat g_{\mb S^{n_a}_a}\big)$
be a Ricci flow solution over a compact surface $(\mc B^2,\check g)$ originating from initial data satisfying
Assumptions~\ref{single fiber pinching}, \ref{eta tame}, \ref{guarantee cylinder}, and~\ref{small gradient}, as stated below,  with all $n_a\geq2$.

Then there is a single fiber, which we may without loss of generality take to be $(\mb S^{n_1}_1,v_1^2\,\hat g_{\mb S^{n_1}_1})$, that becomes singular first.
The solution $g(t)$ develops a Type-I singularity at $T<\infty$ for which the singularity limit  is a direct product
\[
\big(\mb R^{n_2+\cdots+n_A}\times \mc K^{n_1 + 2}, g_\infty = g_{\rm{eucl}} + g_{\mc K}).
\]
Here $g_{\rm{eucl}}$ is a flat Euclidean metric on $\mb R^{n_2+\cdots+n_A}$, and $(\mc K, g_{\mc K})$ is a nonflat gradient shrinking soliton on a complete, noncompact manifold $\mc K = \tilde{\mc B}\times S^{n_1}$, where $\tilde{\mc B}$ is a two-dimensional complete noncompact base, and $g_{\mc K}$ is a warped product metric over the base $\tilde{\mc B}$.
\end{theorem}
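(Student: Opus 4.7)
The plan is to adapt the strategy behind Theorem~\ref{Main-1} to a two-dimensional base. The multiply warped product structure is preserved by the Ricci flow, since the fiber symmetry group $O(n_1+1)\times\cdots\times O(n_A+1)$ acts by isometries on each time slice, so the flow reduces to a coupled parabolic system for the time-dependent base metric $\check g(\cdot,t)$ on $\mc B^2$ and the warping functions $v_a(\cdot,t):\mc B^2\to(0,\infty)$. All subsequent analysis is carried out for this reduced system on the compact base.

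First I would locate the first singular time $T<\infty$ and identify the pinching fiber. Applying the maximum principle to the scalar evolution of $v_a^2$, the strict pinching from Assumption~\ref{single fiber pinching} propagates forward, showing $\min_{\mc B^2}v_1(\cdot,t)\to 0$ while $\min v_a$ stays bounded below for each $a\geq 2$. Assumption~\ref{eta tame} provides the extra control on the base geometry needed to rule out singularities coming from the base itself, and Assumption~\ref{guarantee cylinder} rules out degenerate limits, so only $v_1$ can vanish at $T$.

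Next I would establish the Type-I bound $\sup_{\mc M}(T-t)|\Rm|<\infty$. The dominant sectional curvatures come from the shrinking $\mb S^{n_1}_1$ fiber and scale like $v_1^{-2}$; standard parabolic comparison applied to the $v_1$-equation gives $v_1\gtrsim\sqrt{(n_1-1)(T-t)}$ near the singularity, while Assumption~\ref{small gradient} controls $|\cv v_a|$ so that the mixed sectional curvatures and the base-induced curvature remain subcritical compared to $(T-t)^{-1}$. With Type-I in hand,~\cite{EMT11} supplies a nontrivial gradient shrinking soliton limit $(\mc M_\infty,g_\infty,f_\infty)$ as the pointed smooth Cheeger--Gromov limit of parabolic rescalings centered at a sequence $p_t\in\mc B^2$ realizing $\min v_1(\cdot,t)$. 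Since Cheeger--Gromov convergence preserves isometric group actions, $g_\infty$ retains the action of $O(n_1+1)\times\cdots\times O(n_A+1)$ and is itself a multiply warped product shrinker.

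Identifying $g_\infty$ with the claimed direct product structure is the main obstacle. For each $a\geq 2$ the warping $v_a$ stays uniformly bounded above and below on the relevant region, so after parabolic rescaling by $(T-t)^{-1/2}$ the fiber $\mb S^{n_a}_a$ becomes a round sphere of diverging radius; its induced geometry flattens in the limit, and the de Rham decomposition of $g_\infty$ combined with the splitting results for shrinkers containing flat factors forces these fibers to split off jointly as a flat Euclidean $\mb R^{n_2+\cdots+n_A}$. The surviving factor $\mc K^{n_1+2}$ is automatically a warped product shrinker over a rescaled limit base $\tilde{\mc B}$, obtained as the pointed limit of $\big(\mc B^2,(T-t)^{-1}\check g(t),p_t\big)$, and is complete and noncompact because the compact original is blown up around the pinching point. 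The technical heart of the argument is executing this Cheeger--Gromov compactness simultaneously for the base, the warpings, and the potential $f_\infty$, while tracking the flat/nonflat splitting; this requires uniform geometric control of $(\mc B^2,\check g(t))$ sufficient to prevent degeneration or collapse along the rescaling sequence, which is precisely the role of Assumption~\ref{eta tame}.
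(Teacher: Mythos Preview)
Your overall architecture is right and matches the paper's: single-fiber pinching, Type-I bound, \cite{EMT11} soliton limit, then splitting off the large fibers. But two of your key technical steps are not actually established, and the paper's proof shows they are where the work lies.

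\medskip
\textbf{The lower bound $v_1\gtrsim\sqrt{T-t}$.} You write that ``standard parabolic comparison applied to the $v_1$-equation'' gives this. It does not. The evolution $\partial_t v_1 = \Delta v_1 - (\mu_1+|\nabla v_1|^2)/v_1$ is a subsolution of the heat equation, so comparison gives the \emph{upper} bound $v_1^2\le 2\mu_1(T-t)$ at the minimum; the lower bound is the hard direction. The paper obtains it (Lemma~\ref{lemma-2Dbase-TypeI}) by studying the product $\mc P = v_1^{n_1}\cdots v_A^{n_A}$, whose logarithmic time derivative can be compared to the full scalar curvature $R$; the global lower bound $R\ge r_0$ (from Assumption~\ref{guarantee cylinder}) together with the upper bound on $\check R$ (from Assumption~\ref{eta tame} via Lemma~\ref{lemma-checkR-upper}) then forces $\mc P_{\min}$ to decrease at a rate yielding $v_1^2\ge c(T-t)$. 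You have also misassigned the roles of the assumptions: Assumption~\ref{guarantee cylinder} is exactly what drives this lower bound, not what ``rules out degenerate limits,'' and Assumption~\ref{small gradient} plays no role in the $C^1$ control (that is Lemma~\ref{C1 estimate}, which needs only $n_a\ge 2$).

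\medskip
\textbf{The Type-I curvature bound.} Controlling $|\nabla v_a|$ is not enough. The base--fiber sectional curvatures are $-v_a^{-1}\check\nabla^2 v_a$, so you need a $C^2$ estimate $|\check\nabla^2 v_a|^2\lesssim (T-t)^{-1}$. The paper proves this as Theorem~\ref{Hessian bound} and Corollary~\ref{Hessian Type-I}, and the argument is not short: it requires the two-sided bound $-C_1 v_1^{-2}\le \check R\le \max\{C_0,\tfrac{2\mu_1}{3}v_1^{-2}\}$ on the base scalar curvature (Lemmas~\ref{lemma-checkR-upper}--\ref{lemma-checkR-lower}), which in turn is where Assumption~\ref{eta tame} is genuinely used. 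Your sketch never mentions second derivatives of $v_a$, so as written the Type-I bound is unproved.

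\medskip
On the splitting step, your de~Rham/group-action argument is plausible, but the paper does something more direct and quantitative (Theorem~\ref{shrinker-limit}): using the explicit warped-product curvature formulas it shows that every rescaled curvature component touching a fiber $a\ge 2$ is $O(\sqrt{T-t})$ and hence vanishes in the limit. This simultaneously gives the flat $\mb R^{N_{\mathrm{fl}}}$ factor and the warped-product structure on $\mc K$ without invoking abstract splitting theorems.
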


If $n_1=2$ is the dimension of the crushed fiber, the following result follows immediately from Theorems~\ref{thm-classification} and \ref{Main-2}.

\begin{corollary}
\label{cor-n1-2}
Let $n_1 = 2$ in Theorem \ref{Main-2}, and let
\[\big(\mc M = \mc B^2\times \mb S^2_1\times\mb S^{n_2}_2\times\cdots\times\mb S^{n_A}_A,g=\check g+\sum_a v_a^2\,\hat g_{\mb S^{n_a}_a}\big)
\]
be a Ricci flow solution over a compact surface $(\mc B^2,\check g)$ originating from initial data satisfying
Assumptions~\ref{single fiber pinching}, \ref{eta tame}, \ref{guarantee cylinder}, and~\ref{small gradient}, as stated below,  with all $n_a\geq2$.
Without loss of generality, our assumptions can be made to guarantee the first fiber  $(\mb S^{2}_1,v_1^2\,\hat g_{\mb S^2_1})$ becomes singular first.

The  solution $g(t)$ develops a Type-I singularity at $T<\infty$ for which the singularity limit  is a generalized  cylinder  $\mb S^2 \times \mb R^{2 + n_2 + \dots n_A}$, with a standard cylindrical metric.
\end{corollary}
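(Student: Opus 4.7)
The plan is to combine the two theorems directly: Theorem~\ref{Main-2} produces the singularity limit in a form that splits off a four-dimensional noncompact shrinking soliton factor of the exact type classified by Theorem~\ref{thm-classification}, and the latter then identifies that factor as a bubble sheet.

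More explicitly, I would first invoke Theorem~\ref{Main-2}. Under the stated assumptions, with $n_1=2$ the first fiber $(\mb S^2_1, v_1^2\,\hat g_{\mb S^2_1})$ becomes singular first at some $T<\infty$ with Type-I rate, and the singularity model splits as
\[
\big(\mb R^{n_2+\cdots+n_A}\times \mc K^{n_1+2},\; g_\infty = g_{\mr{eucl}} + g_{\mc K}\big),
\]
where $\mc K = \tilde{\mc B}\times \mb S^{n_1} = \tilde{\mc B}\times\mb S^2$ is a four-dimensional complete noncompact nonflat gradient shrinking Ricci soliton, $\tilde{\mc B}$ is a two-dimensional complete noncompact surface, and $g_{\mc K}$ is a warped product metric of the form $\check g_{\tilde{\mc B}} + v^2 g_{\mb S^2}$ for some positive warping function $v:\tilde{\mc B}\to(0,\infty)$.

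Next I would observe that $(\mc K, g_{\mc K})$ satisfies verbatim the hypotheses of Theorem~\ref{thm-classification}: it is a noncompact, nonflat shrinking Ricci soliton on a four-manifold of the form $\mc B^2\times\mb S^2$ whose metric is a warped product of the stated type. Applying Theorem~\ref{thm-classification} thus forces $(\mc K, g_{\mc K})$ to be isometric to the bubble sheet, namely $\mb R^2\times\mb S^2$ endowed with the standard cylindrical metric.

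Substituting this identification back into the decomposition above yields
\[
(\mb R^{n_2+\cdots+n_A} \times \mc K, g_\infty) \;\cong\; \mb R^{n_2+\cdots+n_A}\times \mb R^2\times \mb S^2 \;=\; \mb S^2 \times \mb R^{2+n_2+\cdots+n_A},
\]
equipped with the corresponding standard cylindrical metric, which is the stated conclusion. There is essentially no obstacle here beyond confirming that the output $\mc K$ of Theorem~\ref{Main-2} indeed lands inside the hypothesis class of Theorem~\ref{thm-classification}; since Theorem~\ref{Main-2} already records that $\mc K$ is noncompact, nonflat, a gradient shrinker, and a warped product of the required form, the corollary is truly immediate.
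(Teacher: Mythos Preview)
Your proposal is correct and follows essentially the same approach as the paper: invoke Theorem~\ref{Main-2} to obtain the Type-I singularity limit as a product of a Euclidean factor with a four-dimensional noncompact nonflat warped-product shrinker $\mc K=\tilde{\mc B}\times\mb S^2$, then apply Theorem~\ref{thm-classification} to identify $\mc K$ with the bubble sheet $\mb R^2\times\mb S^2$. The paper's proof is just a more condensed version of exactly this argument.
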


We expect that similar results hold under suitable hypotheses for solutions over compact bases of higher dimensions, but we do not study those in this work.

\subsection{Outline of the paper}

In Section~\ref{Geometric preliminaries}, we introduce the multiply warped product \emph{Ansatz} that we employ in this paper.
Then making use of details reviewed in Appendix~\ref{Geometric basics}, we derive two equivalent forms of the Ricci flow system for such metrics
and compute the implied evolution of the scalar curvature of the base.

In Section \ref{Classification} we prove Theorem \ref{thm-classification}. This theorem plays an important role in establishing Corollary~\ref{cor-n1-2}.

In Section~\ref{Elementary estimates}, we establish some preliminary  \emph{a priori} $C^k\;(k=0,1,2)$ estimates for the solutions we study along with
sufficient assumptions for each to hold. These estimates are employed elsewhere in the paper to prove our main results.

As noted above, in Section~\ref{Generalized cylinders-1}, we prove our main result for the formation of generalized cylinder limits forming from multiply warped product
solutions over a a one-dimensional base manifold

Then in Section~\ref{Generalized cylinders-2}, as also noted above, we prove our main result for the formation of generalized cylinder limits forming from
multiply warped product solutions over a two-dimensional base manifold. We also prove Corollary~\ref{cor-n1-2} there.

The appendices provide additional technical details. Specifically, in Appendix~\ref{Geometric basics}, we recall a few useful identities for the geometry
of multiply warped products. In Appendix~\ref{Hessian evolution}, we derive the evolution of the covariant Hessian norm $|\cv^2 v_a|^2$.

\section{The metrics we study}		\label{Geometric preliminaries}

Let $(\mc B^n,g_{\mc B})$ be compact.
For each $a\in\{1,\dots,A\}$,  let $(\mc F_a^{n_a},g_{\mc F_a})$ be a  compact Einstein manifold normalized so that $\Rc[g_{\mc F_a}]=\mu_a g_{\mc F_a}$.
Recall that $\mu_a=n_a-1$ for the standard round sphere of radius $1$. To ensure that singularities form in finite time,we assume that all
fibers have nonnegative Einstein constants $\mu_a\geq0$.

Given smooth functions $v_a:\mc B^n\rightarrow\mb R_+$, one can construct a multiply warped product manifold
\[
\Big(\mc M^N:=\mc B^n\times\mc F_1^{n_1}\times\cdots\times\mc F_A^{n_A},\;g:=g_{\mc B} + \sum_{a=1}^A v_a^2\,g_{\mc F_a}\Big).
\]
This is a Riemannian submersion~\cite{ONB66} but is not a product unless all $v_a$ are constant.
Accordingly, we find it convenient to introduce the notations
\[
\check g = g_{\mc B}, \qquad \hat g_a = g_{\mc F_a}, \quad\mbox{ and }\quad g_a = v_a^2\,\hat g_a,
\]
in order to write the metric as
\begin{equation}	\label{MWP metric}
g=\check g+\sum_{a=1}^A g_a = \check g +\sum_{a=1}^A v_a^2\,\hat g_a.
\end{equation}

We review pertinent details of the geometry of the metric~\eqref{MWP metric} and state our index conventions in Appendix~\ref{Geometric basics}.

\subsection{Notational conventions}

Although we study similar metrics in~\cite{CIKS22}, we warn the reader that the notation used here differs slightly from that in the earlier paper.

Our conventions are consistent with~\cite{CIKS22} in that undecorated quantities like $\Delta$ and $|\cdot|$ are computed with respect to the metric $g$ on the whole space.
We use decorations to indicate if a quantity is computed with respect to the metric $\check g$ on the base or with respect to a metric $\hat g_a$ on a fiber.

The main difference is that, in our earlier work, we write $g=g_{\mc B} + \sum_{a=1}^A u_a\,g_{\mc F_a}$ and study the functions $u_a$,
whereas below, we study $v_a=\sqrt u_a$ and $w_a = \frac12\log u_a$.

\subsection{Evolution by Ricci flow}
Under Ricci flow, the multiply warped product structure is preserved, and the base metric $\check g$ and warping functions $v_a$ evolve by a coupled system.
\medskip

In what follows, we find it convenient to study two equivalent forms of this system. The first is independent of a choice of gauge and uses the Laplacian
$\Delta$ of the full metric:
\begin{subequations}															\label{RF full system}
\begin{align}
 \partial_t\,\check g  &= -2 \check{\Rc} + 2\sum_{a=1}^A n_a v_a^{-1}\check\cv^2 v_a,		 \label{RF base}\\
 \partial_t v_a &=\Delta v_a -\frac{\mu_a+|\cv v_a|^2}{v_a},							 \label{RF fibers}
\end{align}
\end{subequations}
where $\check{\Rc}=\Rc[\check g]$.
\medskip

To derive the second form of the system, we fix a gauge. We begin by defining the vector field
\[
X:= \sum_a n_a v_a^{-1} \cv v_a	\qquad\Rightarrow\qquad
(\mc L_X\check g)_{ij}=2\sum_a n_a\Big\{v_a^{-1}(\check\cv^2 v_a)_{ij}-v_a^{-2}\cv_i v_a \cv_j v_a\Big\}.
\]
Then we find that~\eqref{RF base} is equivalent to 
\[
\partial_t \check g = -2\check{\Rc}+2\Big(\sum_a n_a v_a^{-2}\cv v_a \ten \cv v_a\Big) +\mc L_X\check g.
\]
Similarly, defining $w_a:=\log v_a$, one deduces from~\eqref{RF fibers} that
\[
\partial_t w_a = \check\Delta w_a-\mu_ae^{-2w_a}+\mc L_X w_a.
\]
Hence, as long as a solution remains smooth, one can invert the diffeomorphisms generated by $X$ in order to study the simpler system
\begin{subequations}		\label{RF gauged system}
\begin{align}
\partial_t \check g 	&= -2\check{\Rc}+2\sum_a n_a \cv w_a \ten \cv w_a,			\label{RF base alt}\\
\partial_t w_a		&= \check\Delta w_a-\mu_a e^{-2w_a},						\label{RF fiber alt}
\end{align}
\end{subequations}
in which $\check\Delta$ is the Laplacian of the base.
\medskip

We find it convenient below to use both systems~\eqref{RF full system} and~\eqref{RF gauged system}.
Note that in either system, $\check g$ evolves but each $\hat g_a$ is fixed.

\subsection{Evolution of the scalar curvature of the base}

We assume in this subsection that the dimension of the base is strictly greater than one. We then derive the evolution equation
satisfied by $\check R$.

It is well known that if $\check g$ is any evolving Riemannian metric, then one has the variation formula
\[
\partial_t\check g=:\tilde h\qquad\Rightarrow\qquad
\partial_t \check R=-\check\Delta \tilde H+ \check\delta^2 \tilde h - \lp\check{\Rc},\tilde  h\rp_{\check g} =: Y_1+Y_2+Y_3,
\]
where $\tilde  H:=\check g^{ij}\tilde h_{ij}$ and $(\delta \tilde h)_j =-\check g^{ij}\cv_i \tilde h_{ij}$. Because $\check R$ is invariant under diffeomorphism,
we use form~\eqref{RF base alt} to choose
\[
\tilde h=-2\check{\Rc}+2\sum_a n_a v_a^{-2}\cv v_a\ten\cv v_a	\qquad\Rightarrow\qquad
\tilde H=-2\check R +2\sum_a n_a v_a^{-2}|\cv v_a|^2.
\]

We proceed to compute that
\begin{align*}
Y_1	&:=-\check\Delta \tilde H\\
	&= 2\check\Delta\check R\\
	&\quad-4\sum_a n_a \Big\{v_a^{-2}|\check\cv^2 v_a|^2+v_a^{-2}\lp\check\Delta\cv v_a,\cv v_a\rp
	-2v_a^{-3}\lp\cv v_a,\cv|\cv v_a|^2\rp\\
	&\qquad\qquad\qquad-v_a^{-3}|\cv v_a|^2\check\Delta v_a+3v_a^{-4}|\cv v_a|^4\Big\},\\
Y_2	&:= \check\delta^2 \tilde h\\
	&=-\check\Delta\check R\\
	&\quad+2\sum_a n_a\Big\{
	v_a^{-2}\big((\check\Delta v_a)^2+|\check\cv^2 v_a|^2+\lp\cv\check\Delta v_a,\cv v_a\rp + \lp\check\Delta\cv v_a,\cv v_a\rp\big)\\
	&\qquad\qquad\qquad-v_a^{-3}\big(4|\cv v_a|^2\check\Delta v_a+3\lp\cv v_a,\cv|\cv v_a|^2\rp\big) +6v_a^{-4}|\cv v_a|^4\Big\},\\
Y_3	&:=- \lp\check{\Rc},\tilde h\rp_{\check g} = 2|\check{\Rc}|_{\check g}^2-2\sum_a n_a v_a^{-2}\,\check{\Rc}(\cv v_a,\cv v_a).
\end{align*}
Collecting terms and commuting derivatives using the Ricci identities, the evolution equation for $\check R$ becomes
\begin{align*}
\partial_t \check R	&= \check\Delta\check R + 2|\check{\Rc}|^2-4\sum_a n_a v_a^{-2}\,\check{\Rc}(\cv v_a, \cv v_a)\\
	&\quad+2\sum_a n_a\Big\{v_a^{-2}\big[(\check\Delta v_a)^2-|\check\cv^2 v_a|^2\big]
	+v_a^{-3}\big[\lp\cv v_a,\cv|\cv v_a|^2\rp-2|\cv v_a|^2\check\Delta v_a\big]\Big\}.
\end{align*}
To simplify this formula, we write $w_a=\log v_a$ as above and write the preceding equation as:
\begin{equation}		\label{Base R evolution}
(\partial_t-\check\Delta)\check R	=   2|\check{\Rc}|^2-4\sum_a n_a \check{\Rc}(\cv w_a, \cv w_a)
			+2\sum_a n_a\Big\{(\check\Delta w_a)^2-|\check\cv^2w_a|^2\Big\}.
\end{equation}

\section{Classifying warped product shrinking solitons on $\mathbb{R}^2\times\mathbb{S}^2$}	\label{Classification}

The goal of this section is to prove Theorem \ref{thm-classification}. Its motivation is as follows. Elsewhere in this paper, we show (under mild hypotheses)
that a multiply warped product
$\big(\mc M = \mc B^d\times \mb S^{n_1}_1\times\mb S^{n_2}_2\times\cdots\times\mb S^{n_A}_A,g=\check g+\sum_a v_a^2\,\hat g_{\mb S^{n_a}_a}\big)$
for which one fiber (without loss of generality, $\mb S^{n_1}$) is sufficiently small, develops a finite-time singularity modeled by $\mc K\times\mb R^K$,
where $\mc K$ is a shrinking soliton of dimension $d+n_1$ and $K=n_2+\cdots +n_a$. In general, one does not know much about the structure of $\mc K$.
But if the base and the crushed fiber are both two-dimensional, one can say considerably more, which we now prove. The manifold considered in this section
is a shrinking (ancient) Ricci soliton with topology $\mc B^2\times\mb S^2$.

\subsection{Uhlenbeck frame}\label{sec-Uhlenbeck}
First, we obtain information about a singularity limit of a solution over $\mc B^2$ if the fiber that crushes is $\mb S^2$,
using Hamilton's interpretation of $\Rm$ as a symmetric bilinear operator on $\mathfrak{so}(4)=\mathfrak{so}(3)\oplus\mathfrak{so}(3)$ and
Uhlenbeck's trick~\cite{Ham4}.
Because the limit soliton $\mc K^4$ is a warped product, we are able to exploit a time-independent change of basis, an approach which is generally
infeasible in dimension four.

We assume here that $(e_1,e_2)$ is a (local) orthonormal frame field for  $T\mc B^2$ and $(e_3,e_4)$ is a (local) orthonormal frame field for $T\mb S^2$,
both of which are evolving by Uhlenbeck's trick to remain orthonormal.

We define
\[
	\lambda_1 := \check R,\quad 
	\lambda_2 := \frac{1-|\cv v|^2}{v^2},\quad 
	\lambda_3 := -\frac{\check\cv^2_{11}v}{v},\quad 
	\lambda_4 := -\frac{\check\cv^2_{22}v}{v},\quad 
	\lambda_5 := -2\frac{\check\cv^2_{12}v}{v}.
\]
We note the factor of $2$ in $\lambda_5$, which simplifies some formulas below.

We begin with a natural orthogonal basis $\beta$ of $\wedge^2 T\mc M$ given by
\begin{align*}
\beta_{1}  &  =e_{1}\wedge e_{2},\quad\beta_{2}=e_{1}\wedge e_{3},\quad\beta_{3}=e_{1}\wedge e_{4},\\
\beta_{4}  &  =e_{2}\wedge e_{3},\quad\beta_{5}=e_{2}\wedge e_{4},\quad\beta_{6}=e_{3}\wedge e_{4}.
\end{align*}
Using the formulas in Appendix~\ref{Curvature formulas} above and bearing in mind that $(e_1,e_2;\, e_3,e_4)$ is an orthonormal basis for $g$,
one computes the matrix of the curvature operator with respect to  the basis $\beta$. We obtain $M_{11}=R_{1221}+R_{2112}=2R_{1221}$, $M_{12}=2R_{1231}$,
$M_{13}=2R_{1241}$, and so forth, yielding the matrix
\[
M_\beta = \mat
	2\lambda_1 & 0 & 0 & 0 & 0 & 0 \\
	0 & 2\lambda_3 & 0 & \lambda_5 & 0 & 0\\
	0 & 0 & 2\lambda_3 & 0 & \lambda_5 & 0\\
	0 & \lambda_5 & 0 & 2\lambda_4 & 0 & 0\\
	0 & 0 & \lambda_5 & 0 & 2\lambda_4 & 0\\
	0 & 0 & 0 & 0 & 0  & 2\lambda_2\rix.
\]

We now use the Hodge-star decomposition $\Lambda^2 TM = \Lambda_+^2 \oplus \Lambda_-^2$, where 
\[
\Lambda_+^2=\{ \phi\in \Lambda^2 TM\,| \ast\phi=\phi  \},\quad
\Lambda_-^2 = \{ \psi\in \Lambda^2 TM\,| \ast\psi = -\psi  \}.
\]
We obtain orthonormal bases $(\phi_i)$ and $(\psi_i)$ for $\Lambda_+^2$ and $\Lambda_-^2$, respectively, as follows:
\[
	\sqrt{2}\phi_1 = e_1\wedge e_2 + e_3\wedge e_4,\quad
	\sqrt{2}\phi_2 = e_1\wedge e_3 + e_4\wedge e_2,\quad
	\sqrt{2}\phi_3 = e_1\wedge e_4 + e_2\wedge e_3,
\]
\[
	\sqrt{2}\psi_1 = e_1\wedge e_2 - e_3\wedge e_4,\quad
	\sqrt{2}\psi_2 = e_1\wedge e_3 - e_4\wedge e_2,\quad
	\sqrt{2}\psi_3 = e_1\wedge e_4 - e_2\wedge e_3.
\]

The basis $\alpha=(\phi_1,\phi_2,\phi_3,\psi_1,\psi_2,\psi_3)$ is given by the right action $\alpha=\beta \mc A$, where $\mc A$ is the orthogonal matrix
\[
\mc A=\frac{1}{\sqrt 2}\mat
1 & 0 & 0 & 1 & 0 & 0\\
0 & 1 & 0 & 0 & 1 & 0\\
0 & 0 & 1 & 0 & 0 & 1\\
0 & 0 & 1 & 0 & 0 & -1\\
0 & -1 & 0 & 0 & 1 & 0\\
1 & 0 & 0 & -1 & 0 & 0\\
\rix.
\]
The matrix of the curvature operator with respect to $\alpha$ is $M_\alpha = \mc A^{-1}M_\beta \mc A$.
One finds easily that $M_\alpha$ has the block structure
\[
M_\alpha = \mat A & B\\ B^T & C\rix,
	\quad\mbox{ where }\quad
	A = C = \mat  a_1 & 0 & 0\\
		0 & a_2 & 0\\ 
		0 & 0 & a_2\rix
\quad\mbox{ and }\quad
B = \mat b_1 & 0 & 0\\
		0 & b_2 & -\lambda_5\\ 
		0 & \lambda_5 & b_2\rix,
\]
which can be expressed in terms of
\[
	a_1 :=\lambda_1+\lambda_2,\quad
	a_2 := \lambda_3+\lambda_4,\qquad
	b_1 := \lambda_1-\lambda_2,\quad
	b_2 :=\lambda_3-\lambda_4.
\]

As explained in~\cite{Ham4}, Uhlenbeck's trick leads to the following system:
\begin{align*}
	(\partial_t-\Delta) A &= A^2+2A^\# +BB^T,\\ 
	(\partial_t-\Delta) B &= AB+BC +2B^\#,\\ 
	(\partial_t-\Delta) C &= C^2+2C^\# +B^TB.
\end{align*}
Then using standard matrix operations, we compute that the components of $M_\alpha$ evolve by:
\begin{align*}
(\partial_t-\Delta) a_1
&= a_1^2+2a_2^2 + b_1^2,\\ 
(\partial_t-\Delta) a_2
&= a_2^2+2a_1a_2 + b_2^2+\lambda_5^2,\\ 
(\partial_t-\Delta) b_1
&= 2a_1b_1 + 2b_2^2+2\lambda_5^2,\\ 
(\partial_t-\Delta) b_2
&= 2a_2b_2 +2b_1b_2=2(a_2+b_1)b_2,\\
(\partial_t-\Delta) \lambda_5
&= 2a_2\lambda_5 + 2b_1\lambda_5
=2(a_2+b_1)\lambda_5.\\
\end{align*}
It is important to note that the warped product structure means that these are truly partial differential equations,
rather than the ordinary differential inequalities one is usually forced to work with in dimension four. This is because
we are able to make a single time-independent choice of bases.
\medskip

The considerations above yield the following:

\begin{lemma}
\label{lemma-base-R2}

If $(M^4=\mathcal{B}^2\times \mathbb{S}^2,g_t)_{t\le 0}$ is an ancient Ricci flow that is a warped product with complete time slices,
then $\check R\geq0$. 

\end{lemma}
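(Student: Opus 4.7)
The plan is to derive the reaction-diffusion inequality
\[
(\partial_t - \Delta)\check R \;\geq\; 2\check R^2
\]
on $\mc M^4$ and then invoke a Chen-type minimum principle for complete ancient Ricci flows. Here $\check R$ is regarded as a function on $\mc M$ that is constant along each $\mb S^2$ fiber, and $\Delta$ is the Laplacian of the full metric $g$; this lift is legitimate because the time-independent Uhlenbeck frame $(e_1,e_2;e_3,e_4)$ introduced above is adapted to the base-fiber splitting, so each $\lambda_i$, $a_i$, $b_i$ is a genuine smooth scalar on $\mc M$.

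The inequality is a short algebraic consequence of two of the Uhlenbeck equations already derived. Since $a_1 + b_1 = (\lambda_1+\lambda_2) + (\lambda_1-\lambda_2) = 2\lambda_1 = 2\check R$, I would sum
\[
(\partial_t - \Delta)a_1 = a_1^2 + 2a_2^2 + b_1^2, \qquad (\partial_t - \Delta)b_1 = 2a_1 b_1 + 2b_2^2 + 2\lambda_5^2,
\]
and use $a_1^2 + 2a_1 b_1 + b_1^2 = (a_1+b_1)^2$ to obtain
\[
(\partial_t - \Delta)(2\check R) = (2\check R)^2 + 2(a_2^2 + b_2^2 + \lambda_5^2),
\]
from which the desired inequality follows upon discarding the nonnegative remainder. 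With this in hand, I would argue by contradiction: if $\check R(x_0,t_0) = -a < 0$ at some $(x_0,t_0)$, the minimum principle yields $\check R_{\min}(t) \leq y(t)$ for $t \leq t_0$, where $y$ solves the ODE $\dot y = 2y^2$ with $y(t_0) = -a$. But $y(t) = -[\,2(t-t_0) + a^{-1}\,]^{-1} \to -\infty$ at the finite past time $t_\ast = t_0 - 1/(2a)$, contradicting that $(g_t)_{t\leq 0}$ is smooth and complete on all of $(-\infty,0]$.

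The main technical obstacle is the noncompactness of $\mc B^2$, which may prevent the spatial infimum of $\check R$ from being attained, so the naive minimum principle is unavailable. I would handle this exactly as in Bing-Long Chen's proof that the scalar curvature of any complete ancient Ricci flow is nonnegative: add to $\check R$ a small localizing barrier $\varepsilon\,\phi(d(x,x_0),t)$ that grows mildly at spatial infinity, apply the minimum principle to $\check R + \varepsilon\phi$ on exhausting compact sets, and exploit the fact that the quadratic reaction $2\check R^2$ dominates the linear error produced by $\phi$, so that sending $\varepsilon \to 0$ recovers the contradiction above. No further estimates on $v$ or $\check g$ are needed beyond smoothness and completeness of the ancient warped-product solution.
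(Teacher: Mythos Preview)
Your proposal is correct and follows essentially the same route as the paper: both derive the identity $(\partial_t-\Delta)(a_1+b_1)=(a_1+b_1)^2+2(a_2^2+b_2^2+\lambda_5^2)$, note $a_1+b_1=2\check R$, and conclude via Bing-Long Chen's argument for complete ancient flows. The only cosmetic difference is that the paper phrases the last step as preservation of a closed convex parallel-invariant set in the space of symmetric bilinear forms on $\mathfrak{so}(4)$ (Hamilton's tensor maximum principle) before invoking Chen, whereas you work with $\check R$ directly as a scalar supersolution; the analytic content is identical.
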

\begin{proof}
We observe that $a_1+b_1=2\check R$ and that
\begin{equation}
\label{eq-R-uhlen}
	(\partial_t-\Delta)(a_1+b_1) = (a_1+b_1)^2+2(a_2^2+b_2^2+\lambda_5^2) \ge (a_1+b_1)^2.
\end{equation}
The set $a_1+b_1=2\check R\geq0$ is closed and convex in the space of symmetric bilinear forms on $\mathfrak{so}(4)$ and is invariant under parallel translation.
So the tensor maximum principle applies and lets us conclude that $\check R\geq0$ on the ancient limiting soliton by using Bing-Long Chen's argument~\cite{BLC09}.

\end{proof}

Before we prove Theorem \ref{thm-classification}, we  recall some identities that hold on gradient shrinking Ricci solitons. Using those we derive
new identities involving the warping function, that play an important role in proving the classification result above.

\subsection{Standard formulas}

The gradient Ricci soliton equation is
\[
\Rc[g] + \lambda g + \cv^2 f = 0,
\]
where $\lambda<0$ on a shrinker. By scaling we may assume  $\lambda=-\frac12$, yielding
\begin{align}
\Rc+\cv^2f-\frac{1}{2}g	&=0,\\
\Delta(\mr df) + \Rc(\mr df)	&=0,\\
R+|\cv f|^2-f	&=c_n,				\label{normalization}
\end{align}
where $c_n$ is a constant that can be determined with extra hypotheses.

\subsection{Warped product geometry}		\label{Dimension-2}
For $i,j,k,\ell\in\{1,2\}$ and $\alpha,\beta,\gamma,\delta\in\{1,\dots,n\}$, we write $g_{\alpha\beta}=v^2\,\hat g_{\alpha\beta}$ and observe that
the geometric curvature data of $g$ are determined by
\begin{align}
R_{ijk\ell}	&=\check R_{ijk\ell},\\
R_{\alpha\beta\gamma\delta}	&= v^2\hat R_{\alpha\beta\gamma\delta}-\frac{|\cv v|^2}{v^2}\big(g_{\alpha\delta}g_{\beta\gamma}-g_{\alpha\gamma}g_{\beta\delta}\big),\\
R_{i\alpha\beta j}	&=-v^{-1}(\check\cv^2 v)_{ij}\,g_{\alpha\beta},\\
R_{ij}	&=\frac12 \check R\,\check g_{ij}-nv^{-1}(\check\cv^2 v)_{ij},\\
R_{\alpha\beta}	&=\Big\{(n-1)\frac{1-|\cv v|^2}{v^2}- v^{-1}\check\Delta v\Big\}g_{\alpha\beta},\\
R&=\check R-2n\frac{\check\Delta v}{v}+n(n-1)\frac{1-|\cv v|^2}{v^2}.			\label{R}
\end{align}

These formulas may be be compared to Proposition~9.106 of~\cite{Besse}. 

\subsection{Warped product shrinkers}
Consider a shrinking soliton limit $\tilde{\mc B}^2\times_v \mc F^n$. Then recalling that $N=n+2$, one defines
\begin{align*}
\Rc_f	&:=\Rc+\cv^2f,\\
\check\Delta_f\zeta	&:=\check\Delta\zeta-\lp\cv f,\cv\zeta\rp,	&(\forall\zeta:\mc B^2\to\mb R),\\
\mr d\nu	&:= (4\pi)^{-N/2}e^{-f}\,\mr dV,\\
\mr d\check\nu	&:= (4\pi)^{-N/2}e^{-f}|\mc F^n|\,\mr dA,
\end{align*}
noting that $\check\Delta_f$ is self-adjoint with respect to either measure above. Then one has
\begin{align}
\Rc_f	&=\frac12 g,	&(\mbox{\blue full:soliton}),	\label{full:soliton} \\
\frac12(\check R-1)\check g_{ij}	&=nv^{-1}\check\cv^2_{ij} v - \check\cv^2_{ij}f,	&(\mbox{\blue base:tensor}),	\label{BT}\\
\check R&= 1+nv^{-1}\check\Delta v-\check\Delta f,		&(\mbox{\blue base:scalar}),			\label{BS}\\
v^{-1}\check\Delta_fv &=(n-1)v^{-2}(1-|\cv v|^2)-\frac12,		&(\mbox{\blue fiber:scalar}),		\label{FS} \\
\int_{\mc M}\zeta\,\mr d\nu		&= \int_{\mc B}\zeta\,v^n\,\mr d\check\nu,	&(\forall\zeta:\mc B^2\to\mb R).
\end{align}
We are most interested in the formulas above in the case $n = 2$.

\subsection{A new identity}
We trace~\eqref{BT} with $\check g^{-1}$ to get~\eqref{BS}, which we write here as
\begin{equation}	\label{R-1 Id}
(\check R-1) = \frac{2}{v}\check\Delta v-\check\Delta f.
\end{equation}
Next, we contract~\eqref{BT} with $\check\cv^2 v$ to get
\begin{equation}	\label{Id1}
\frac12(\check R-1)\check\Delta v = \frac{2}{v}|\check\cv^2 v|^2 - \lp \check\cv^2 v,\check\cv^2 f\rp,
\end{equation}
and then contract~\eqref{BT} with $\check\cv^2 f$ to get
\begin{equation}	\label{Id2}
\frac12(\check R-1)\check\Delta f = \frac{2}{v} \lp \check\cv^2 v,\check\cv^2 f\rp - |\check\cv^2 f|^2.
\end{equation}

Now, we add~\eqref{Id1} and $\frac{v}{n}$ times~\eqref{Id2} and substitute~\eqref{R-1 Id} to obtain, after a wee bit of algebra,
\begin{equation}	\label{Nice Id}
\frac{2}{v}\Big\{|\check\cv^2 v|^2-\frac12(\check\Delta v)^2\Big\} = \frac{v}{2}\Big\{|\check\cv^2 f|^2-\frac12(\check\Delta f)^2\Big\}.
\end{equation}

This can be rewritten in a slicker way. Let $A=\check\cv^2 v$ and $B = \check\cv^2 f$, and denote their trace-free parts
by $\stackrel{\circ} A$ and $\stackrel{\circ}B$, respectively. Then, whatever the dimension $n$ of the fibers may be, equation~\eqref{Nice Id} is equivalent to
\begin{equation}	\label{Nicer Id}
\big|\stackrel{\circ}A\big|^2 = \big(\frac{v}{2}\big)^2 \big|\stackrel{\circ}B\big|^2.
\end{equation}
\medskip

For use in the proofs of the next Lemma and Theorem~\ref{thm-classification}, we define
\begin{equation}	\label{define_h}
h:=b_2^2+\lambda_5^2.
\end{equation}

\begin{lemma}
\label{lemma-useful}
Let $(\mathbb{R}^2\times\mathbb{S}^2, g)$ be a nonflat shrinking Ricci soliton as above. Then the identities
\[a_2 + \lambda_2  = \frac12, \qquad a_2 b_1 = 0, \qquad \mbox{and} \qquad \langle\nabla f, \nabla v\rangle = 0,\]
hold everwhere on $\mathbb{R}^2\times\mathbb{S}^2$, where $a_1, a_2, b_1, \lambda_5$ are defined in Section~\ref{sec-Uhlenbeck}.
\end{lemma}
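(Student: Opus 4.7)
The plan is to reduce all three identities to the single analytic fact $\langle\cv f,\cv v\rangle\equiv 0$ and then establish that fact using the soliton identities collected in Section~\ref{Dimension-2}.

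\emph{Reduction.} In the notation of Section~\ref{sec-Uhlenbeck}, $a_2=-\check\Delta v/v$ and $\lambda_2=(1-|\cv v|^2)/v^2$, so the fiber scalar equation~\eqref{FS} with $n=2$ rearranges to the master identity
\[
a_2+\lambda_2-\tfrac{1}{2}+v^{-1}\langle\cv f,\cv v\rangle=0.
\]
Hence (i) is equivalent to (iii), and it suffices to prove (iii) and (ii).

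\emph{Proof of (iii).} My approach would combine the base tensor equation~\eqref{BT} with the soliton normalization~\eqref{normalization}. Contracting~\eqref{BT} with $\cv^iv\,\cv^jv$ and using the identity $(\check\cv^2v)(\cv v,\cv v)=\tfrac12\langle\cv v,\cv|\cv v|^2\rangle$ expresses $(\check\cv^2f)(\cv v,\cv v)$ in terms of $v$ and $\check R$ alone. Independently, differentiating~\eqref{normalization} along $\cv v$ and applying the contracted second Bianchi identity $\cv_iR=2R_{ij}\cv^jf$ (together with the expression for $R$ in~\eqref{R}) yields a second expression for the same quantity. Equating the two produces a drift-elliptic equation of the form
\[
\check\Delta_f u = \Phi\, u
\]
on $\mc B^2$ for $u:=\langle\cv f,\cv v\rangle$, with coefficient $\Phi$ built from $v$ and $\check R$. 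The nonnegativity $\check R\geq 0$ from Lemma~\ref{lemma-base-R2}, the self-adjointness of $\check\Delta_f$ with respect to the weighted measure $\mr d\check\nu$, and the completeness of $(\mc B^2,\check g)$ then force $u\equiv 0$ via a weighted energy estimate, with the boundary terms controlled by the standard quadratic growth of $f$ on shrinkers.

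\emph{Proof of (ii).} Given (i), $\lambda_2=\tfrac{1}{2}-a_2$, so $b_1=\check R-\lambda_2=\check R+a_2-\tfrac{1}{2}$ and
\[
a_2b_1=a_2\bigl(\check R+a_2-\tfrac{1}{2}\bigr).
\]
At any point where $a_2\neq 0$ one must therefore have $\check R+a_2=\tfrac{1}{2}$; substituting into~\eqref{BS} gives $\check\Delta f=\lambda_2$. Combining this supplementary constraint with the Nicer Identity~\eqref{Nicer Id} and the strong maximum principle applied to $\check R$ along the ancient flow (as in Lemma~\ref{lemma-base-R2}) forces $a_2\equiv 0$ on each connected component of $\{a_2\neq 0\}$, which is a contradiction unless that set is empty. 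We conclude $a_2b_1=0$ pointwise.

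\emph{Main obstacle.} The crux is establishing $\langle\cv f,\cv v\rangle\equiv 0$ on the complete noncompact base $\mc B^2$. The lack of compactness prevents a direct maximum-principle argument, so one must combine multiple soliton identities to produce a tractable drift-elliptic PDE for $u$ and then exploit the weighted measure $\mr d\check\nu$ together with the ancient curvature bound $\check R\geq 0$ from Lemma~\ref{lemma-base-R2} to conclude via weighted integration by parts. Choosing the correct combination of identities and verifying the requisite integrability at infinity is the technical heart of the argument.
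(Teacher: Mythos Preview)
Your reduction of (i) to (iii) via the master identity from~\eqref{FS} is correct, but your plan to prove (iii) first by deriving a drift-elliptic equation for $u=\langle\cv f,\cv v\rangle$ and running a weighted energy argument is both unnecessary and incomplete: you never identify the coefficient $\Phi$, check its sign, or verify the integrability needed for the cutoff argument. The paper instead proves (i) \emph{directly} by a short algebraic manipulation: combine~\eqref{BS} and~\eqref{FS} to express $\check R-2\lambda_2$ in terms of the \emph{full} Laplacian $\Delta f$, then use the traced soliton identity $R+\Delta f=2$ and the scalar curvature formula~\eqref{R} (which in this setting reads $R=\check R+4a_2+2\lambda_2$) to obtain $a_2+\lambda_2=\tfrac12$ with no analysis whatsoever. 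Identity (iii) then drops out of your own master identity.

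Your argument for (ii) has a genuine gap. From (i) you correctly deduce that $a_2\neq0$ forces $\check R+a_2=\tfrac12$ pointwise, hence $\check\Delta f=\lambda_2$ there; but the next sentence---that ``combining this supplementary constraint with the Nicer Identity~\eqref{Nicer Id} and the strong maximum principle applied to $\check R$ \dots forces $a_2\equiv 0$ on each connected component of $\{a_2\neq0\}$''---is not a proof. The Nicer Identity only equates the trace-free parts of $\check\cv^2 v$ and $\check\cv^2 f$; it says nothing that would pin down $a_2$ from $\check\Delta f=\lambda_2$, and the strong maximum principle for $\check R$ gives no leverage on the open set $\{a_2\neq0\}$. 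The paper's route is entirely different and uses the Uhlenbeck evolution equations from Section~\ref{sec-Uhlenbeck}: on the self-similar shrinker one has $\Box Q=Q-\Delta_f Q$ for any quantity scaling like curvature, so since $a_2+\lambda_2\equiv\tfrac12$ one gets $\Box(a_2+\lambda_2)=\tfrac12$; computing the same quantity from the evolution equations for $a_2$ and $\lambda_2=\tfrac12(a_1-b_1)$ yields $\tfrac12+2a_2b_1$, whence $a_2b_1=0$.
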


\begin{proof}
On a shrinker in our setting, by~\eqref{BS} and~\eqref{FS}, we have
\[
    \check R -1 = 2v^{-1}\check\Delta v - \check\Delta f,\qquad 
    v^{-1}\check \Delta_f v = \lambda_2 - \tfrac{1}{2}.
\]
Plugging the formula for $2v^{-1}\check\Delta v$ into the second equation, one gets
\[
    \check R - 1 + \check \Delta f - 2 v^{-1}\nabla v\cdot \nabla f
    = 2\lambda_2 - 1,
\]
which is the same as
\[
    \check R - 2\lambda_2 = -\Delta f.
\]
By taking the trace of \eqref{full:soliton} we obtain $R + \Delta f = 2$, and hence by \eqref{R}, we have
\[
    \check R - 2\lambda_2 = R-2 = \check R + 4a_2 +2\lambda_2 - 2,
\]
and thus on the static shrinker, or at time $-1$ of the flow,  we have
\[  a_2 + \lambda_2 = \tfrac{1}{2},
\]
as claimed. The third identity, $\langle\nabla f, \nabla v\rangle = 0$ follows immediately by adding \eqref{FS}  to the previous identity.

To prove the final identity, recall that if $Q$ scales like $R$ on a shrinker, then
\[
    \Box Q = Q - \Delta_f Q,
\]
where, here and below, $\Box=\frac{\partial}{\partial t}-\Delta$.

Also recall that $\Box \lambda_2 = \frac{1}{2}\Box(a_1-b_1)=2\lambda_2^2+a_2^2-h$, where $h$ is defined in~\eqref{define_h}.
Thus we have
\[
\begin{split}
    \tfrac{1}{2} &= a_2+\lambda_2 - \Delta_f(a_2+\lambda_2) = \Box (a_2+\lambda_2)\\ 
    &= a_2^2+2a_1a_2+h + 2\lambda_2^2+a_2^2-h \\ 
    &= 2(a_2+\lambda_2)^2 + 2a_2(a_1-2\lambda_2) \\ 
    &= \tfrac{1}{2} + 2a_2 b_1.
\end{split}
\]
This implies that
\[ a_2 b_1 \equiv 0,
\]
concluding the proof of the Lemma.
\end{proof}

We are now ready to achieve the goal of this section.

\begin{proof}[Proof of Theorem~\ref{thm-classification}]
By the strong maximum principle, either  $\check R>0$ everywhere or $\check{R} \equiv 0$ on {$\mc B^2\times\mb S^2$}.
We show below that the former case cannot happen.
\medskip
 
 Assume first that $\check{R} \equiv 0$ on  {$\mc B^2\times\mb S^2$}. 
 Recall that by \eqref{eq-R-uhlen}, we have
\begin{equation}\label{eq-R-2}
    \Box \check R = 2\check R^2 + a_2^2+h,
\end{equation}
where $h$ is defined in~\eqref{define_h}.
Since $\check{R} \equiv 0$, \eqref{eq-R-2} implies that $a_2 \equiv 0$, $b_2\equiv 0$, and $\lambda_5 \equiv 0$.
This implies that {$\check\nabla^2 v = 0$ on $\mc B^2$.}
 By \eqref{BT}, $\check\nabla^2 f = \frac{1}{2}\check g,$ and thus $(\mc B^2,\check g)$ must be isomorphic to $\mb R^2$ with the Euclidean metric
(see, \emph{e.g.,} page 218 of \cite{DZ}). 
Then $v = ax_1 + bx_2 + c$, for some constants $a, b, c \in \mathbb{R}$, and all $x_1, x_2\in \mathbb{R^2}$.
Since $v > 0$, this requires that $a = b = 0$, and hence $v \equiv c$ on $\mathbb{R}^2$.
This yields the conclusion of Theorem~\ref{thm-classification}.
\medskip

Assume now that $\check{R} > 0$ everywhere on $\mc B^2\times\mb{S}^2$. Then $\mc B^2$ is diffeomorphic to $\mb R^2.$ Our goal is to show that this is not possible. In order to achieve this,
we define $G := \frac{\sqrt{h}}{{\check{R}}}$. We claim  that $h \equiv 0$ on $\mathbb{R}^2\times\mathbb{S}^2$, and to show this we first compute its evolution equation.

By computations in Section~\ref{sec-Uhlenbeck}, we have
\[
\Box h = 4(a_2+b_1)h - 2|\nabla b_2|^2 - 2|\cv\lambda_5|^2.
\]
It follows that
\[
    \Box \sqrt{h} = \tfrac{1}{2}h^{-\frac{1}{2}} \Box h + \tfrac{1}{4}h^{-\frac{3}{2}}|\nabla h|^2
    = 2(a_2+b_1)\sqrt{h} -(|\nabla b_2|^2+|\nabla \lambda_5|^2)h^{-\frac{1}{2}} + \tfrac{1}{4}h^{-\frac{3}{2}}|\nabla h|^2.
\]
Because
{
\begin{align*}
  \tfrac{1}{4}|\nabla h|^2 &= |b_2\nabla b_2 + \lambda_5\nabla \lambda_5|^2
    = b_2^2|\nabla b_2|^2+\lambda_5^2|\nabla \lambda_5|^2 + 2b_2\lambda_5\lp\nabla b_2,\nabla \lambda_5\rp\\ 
    &\le   b_2^2|\nabla b_2|^2+\lambda_5^2|\nabla \lambda_5|^2 +\lambda_5^2|\nabla b_2|^2 + b_2^2|\nabla\lambda_5|^2\\ 
    &= (b_2^2+\lambda_5^2)(|\nabla b_2|^2+|\nabla \lambda_5|^2),
\end{align*}}
we have
\[
    \Box \sqrt{h}  \le 2(a_2+b_1)\sqrt{h}.
\]
It follows that
\begin{align*}
\Box G&= \frac{\Box\sqrt{h}}{\check R} - \frac{G}{\check R}\Box\check R+2\lp\nabla G, \nabla\log\check R\rp\\ 
&\le 2(b_1+a_2)G - \frac{G}{\check R}(2\check R^2+a_2^2+h)  + 2\lp\nabla G,\nabla\log\check R\rp \\
&=  (2b_1+2a_2 - 2\check R - a_2^2/\check R)\,G - \check RG^3 +  2\lp\nabla G,\nabla\log\check R\rp. 
\end{align*}

We now define $P:=  2b_1+2a_2 - 2\check R - a_2^2/\check R.$ We claim that $P < 0$ on $\mathbb{R}^2\times\mathbb{S}^2$.
Indeed, at any point of our manifold, by Lemma \ref{lemma-useful}, there are two cases:

\underline{Case 1}: $a_2=0.$ Then $\lambda_2 = \frac{1}{2}$ and
\[
P=2b_1+2a_2 - 2\check R - a_2^2/\check R
= 2b_1-2\check R = -2\lambda_2 = -1< 0.
\]

\underline{Case 2}: $b_1=0.$ Then
\[
P=2a_2 - 2\check R - a_2^2/\check R \le -\check R < 0
\]
by the Cauchy--Schwarz inequality.
\medskip

Recall that $f$ is the shrinker potential. Since $G$ is a scaling invariant quantity, at time $t=-1,$ by the computations above, on the soliton
$(\mb R^2\times \mb S^2, g)$ with a warped product structure, we have
\begin{align*}
 -\Delta_f G &=\Box G\le PG-\check R G^3+ 2\lp\nabla G, \nabla\log\check R\rp.
\end{align*}
 Let $\tilde f = f -2\log\check R.$ Then we have
\[
    -\Delta_{\tilde f}G \le PG-\check R G^3.
\]
Let $\eta=\eta_r$ be a standard cutoff function with $\eta_r|_{B_r(o)}=1,|\nabla \eta_r|\le 10/r.$ Multiplying the inequality above by $\eta^2 G e^{-\tilde f}$
and integrating it over the entire manifold $M := \mb R^2\times \mb S^2$ yields
\[
    -\int_{M} \eta^2 G \Delta_{\tilde f}Ge^{-\tilde f} \,\mr dV_g
    \le \int_{M} \eta^2(PG^2-\check R G^4) e^{-\tilde f}\, \mr dV_g,
\]
where the left-hand side after integration by parts can be rewritten as
\begin{align*}
    \int_{M} \eta^2 |\nabla G|^2 e^{-\tilde f}\, \mr dV_g &+ 2\int_{M} \eta G\nabla \eta \cdot \nabla G e^{-\tilde f}\,\mr dV_g  \\ 
    \ge &\ \frac{1}{2}\int_{M} \eta^2 |\nabla G|^2 e^{-\tilde f}\,\mr dV_g  - 2\int_{M}  |\nabla \eta|^2 G^2  e^{-\tilde f}\,\mr dV_g \\ 
    \ge & \ \frac{1}{2}\int_{M} \eta^2 |\nabla G|^2 e^{-\tilde f}\,\mr dV_g  - \frac{C}{r^2}\int_{M} G^2  e^{-\tilde f}\,\mr dV_g. 
\end{align*}
By \eqref{Nice Id}, we have
\[
    h = 2|-v^{-1}\check \nabla v + \tfrac{1}{2} v^{-1}\check\Delta v\check g|^2 = \tfrac{1}{2}|\check\nabla^2 f - \tfrac{1}{2}\check\Delta f\check g|^2
    \le 2|\nabla^2 f|^2.
\]
It follows that
\begin{align}\label{eq-int-bound}
 \int_{M} G^2  e^{-\tilde f}\,\mr dV_g 
 &=     \int_{M} \check R^2 G^2  e^{-f}\,\mr dV_g  = \int_{M} h e^{-f}\,\mr dV_g \\
 &\le 2\int_{M} |\nabla^2 f|^2e^{-f}\,\mr dV_g 
= 2\int_{M} |\tfrac{1}{2}g - \Rc|^2 e^{-f}\,\mr dV_g   \le C, \nonumber
\end{align}
where in the last inequality,  we use~\cite{MS} to see that $\int_{\mb R^2\times\mb S^2} |\Rc|^2 e^{-f}\, \mr dV_g \le C$, and the result from \cite{CZ} that the soliton potential has quadratic growth. Combining the estimates above, we obtain
\[
    \int_{\mb R^2} \eta^2|\nabla G|^2 e^{-\tilde f}\,\mr dV_g 
    \le \int_{\mb R^2} \eta^2(PG^2-\check RG^4) e^{-\tilde f}\,\mr dV_g  + C/r^2. 
\]
Taking $r\to \infty$  in the inequality above, using the fact that $P \le 0$, and bearing in mind that~\eqref{eq-int-bound} holds, we obtain
\[
    \int_{\mb R^2} |\nabla G|^2e^{-\tilde f}\,\mr dV_g   = 0.
\]
Hence, $G\equiv 0$ and $b_2=\lambda_5=0.$ By \eqref{Nice Id}, this implies $\check\nabla^2 f=\mu \check g$ for some function $\mu$, where $f$ is the soliton potential function.  By Tashiro's theorem~\cite{Tashiro}, the base itself is a warped product $\check g = \mr dr^2+\rho^2(r)\,\mr d\theta^2$, 
where $\rho(r)=|\nabla f|/a=f'(r)/a>0$ for some constant $a>0$ such that $\rho'(0)=1$. Furthermore, since $h\equiv 0$, we have
\[
    v''\mr dr^2+\rho\rho'v'\mr d\theta^2=\check\nabla^2 v = \tfrac{1}{2}\check\Delta v\check g
    = \tfrac{1}{2}(v''+\rho^{-1}\rho'v')\check g.
\]
This implies that
\[
    v'' = \rho^{-1}\rho'v',\qquad \iff\qquad 
    \rho v'' = \rho'v'.
\]
Then $(v'/\rho)'=0$ and $v'=b f'(r)$ for some constant $b$. By Lemma \ref{lemma-useful},
we have $f'(r) v'(r) \equiv 0$. Together, these imply that $b = 0$, and hence that $v'(r) \equiv 0$, which implies $v \equiv c$ on $\mb R^2$ for some constant $c$.
This together with
$a_2 + \lambda_2 = \frac12$ implies further that that $v^2 \equiv 2$ on $\mb R^2$. \
By examining the curvature representation discussed in Section~\ref{sec-Uhlenbeck}, we see that   our soliton $(\mb R^2\times \mb S^2, g)$
has nonnegative isotropic curvature and thus, {by~\cite[Corollary 3.1]{LNW}}, is isometric to the bubble sheet soliton metric on $\mb R^2\times \mb S^2$.
This in particular implies that $\check{R} \equiv 0$, showing that the case $\check{R} > 0$ everywhere on $\mb R^2$ cannot occur, as claimed above.

This concludes the proof of Theorem~\ref{thm-classification}.

\end{proof}

\section{General elementary estimates}	\label{Elementary estimates}

In this section, we establish a few preliminary $C^k\;(k=0,1,2)$ estimates that are useful for our applications below. 

\subsection{$C^0$ estimates}

\begin{lemma}		\label{C0 estimates}
For as long as a solution exists, the size of each fiber with $\mu_a\geq0$ satisfies
\[
(v_a)_{\max}(t)\leq (v_a)_{\max}(0)\qquad\mbox{ and }\qquad (v_a)_{\min}(t)\leq\sqrt{2\mu_a(T_a-t)}
\]
if there exists a first time $T_a>0$ such that $(v_a)_{\min}=0$.
\end{lemma}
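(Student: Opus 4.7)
The plan is to apply the maximum principle directly to the scalar reaction-diffusion equation \eqref{RF fibers} satisfied by each warping function $v_a$, using Hamilton's trick to handle the fact that the spatial extrema are only Lipschitz in time. Throughout, I treat $v_a$ as a function on the base $\mc B$, noting that at a base-critical point of $v_a$ the correction terms separating $\Delta v_a$ from $\check\Delta v_a$ all vanish, so the full-metric operators in \eqref{RF fibers} reduce to base operators at such points.

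For the first bound, let $M_a(t) := \max_{\mc B} v_a(\cdot,t)$ and pick any spatial maximizer $x_0$ at time $t_0$. The first- and second-derivative tests give $\cv v_a(x_0,t_0)=0$ and $\Delta v_a(x_0,t_0)\leq 0$. Plugging into \eqref{RF fibers},
\[
\partial_t v_a(x_0,t_0) \;\leq\; -\frac{\mu_a}{v_a(x_0,t_0)} \;\leq\; 0,
\]
since $\mu_a\geq 0$ by hypothesis and $v_a>0$ as long as the solution remains smooth. Hamilton's trick then yields $\tfrac{d^+}{dt} M_a(t)\leq 0$, so $M_a$ is nonincreasing, which is the first assertion.

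For the second bound, I work with the squared quantity and set $m_a(t) := \min_{\mc B} v_a(\cdot,t)^2$. At a spatial minimizer $x_1$ of $v_a$ (equivalently of $v_a^2$, since $v_a>0$), one has $\cv v_a=0$ and $\Delta v_a\geq 0$, so \eqref{RF fibers} gives
\[
\partial_t (v_a^2)\big|_{x_1} \;=\; 2v_a\Delta v_a - 2\bigl(\mu_a+|\cv v_a|^2\bigr) \;\geq\; -2\mu_a.
\]
Hamilton's trick therefore yields $m_a'(t)\geq -2\mu_a$ in the lower-Dini sense. Integrating from $t$ up to $T_a$, using the hypothesis that $T_a$ is the first time with $(v_a)_{\min}=0$ (so $m_a(T_a)=0$ by continuity), gives $-m_a(t)\geq -2\mu_a(T_a-t)$, i.e.\ $m_a(t)\leq 2\mu_a(T_a-t)$. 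Taking square roots recovers the stated estimate.

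The only delicate issue is the justification of Hamilton's trick for the spatial extrema of $v_a$, which are Lipschitz but not smooth in $t$; but this is standard and follows from an envelope argument applied to the smooth field $v_a(x,t)$ on the compact base $\mc B$, requiring no additional hypotheses beyond those already in force while the solution exists.
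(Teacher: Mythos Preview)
Your proof is correct and follows essentially the same approach as the paper's own proof: both use the maximum principle on~\eqref{RF fibers} to see that $(v_a)_{\max}$ is nonincreasing, and both derive $\frac{d}{dt}(v_a^2)_{\min}\geq -2\mu_a$ at a spatial minimum and integrate from $t$ to $T_a$. The paper simply phrases the first part as ``$v_a$ is a subsolution of the heat equation'' and omits the details you spell out (Hamilton's trick, the vanishing of the $\Delta$ versus $\check\Delta$ correction at critical points), but the underlying argument is identical.
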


\begin{proof}
Equation~\eqref{RF fibers} implies that $v$ is a subsolution of the heat equation, which proves the first claim.

If $T_a$ exists, the second claim follows from $\frac{\mr d}{\mr dt}(v_a^2)_{\min}(t)\geq-2\mu_a$ by integration from $t$ to $T_a$.
\end{proof}

Without loss of generality, we relabel if necessary so that $\mc F_1$ is crushed first.

\begin{assumpA}		\label{single fiber pinching}
The  initial data satisfies the assumption of \emph{single fiber pinching} if
\[
\frac{(v_{a}^2)_{\min}(0)}{2\mu_{a}} \ge \frac{(v_1)_{\max}^2(0)}{\mu_1}
\]
for all $a\in \{2,3, \dots, A\}$. 
\end{assumpA}

The reason for the name above is the following:

\begin{lemma}		\label{lemma-initial}
Consider a multiply warped product over any compact base $\mc B^n$ that satisfies Assumption~\ref{single fiber pinching}.
Then there exist a time $T < \infty$ and a constant $\delta > 0$ so that
$\liminf_{t\to T}\big\{\min_{x\in \mc B^n} v_1(x,t)\big\} = 0$, but $\min_{x\in \mc B^n} \big\{v_a(s,t)\big\} \ge \delta > 0$ for all $t\in [0,T)$ and all $a\in \{2,3,\dots, A\}$.
\end{lemma}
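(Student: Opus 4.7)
The strategy is to upgrade the computation in the proof of Lemma~\ref{C0 estimates} to a two-sided ODE comparison, and then to use the numerical gap in Assumption~\ref{single fiber pinching} to separate the behavior of $v_1$ from that of the remaining fibers.

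The first step is to revisit the evolution of $v_a^2$. Squaring~\eqref{RF fibers} yields
\[
 \partial_t (v_a^2) \;=\; \Delta(v_a^2) \;-\; 4|\cv v_a|^2 \;-\; 2\mu_a,
\]
so that at any spatial \emph{maximum} of $v_a^2$, where $|\cv v_a|^2=0$ and $\Delta(v_a^2)\le0$, we get $\frac{\mr d}{\mr dt}(v_a^2)_{\max}\le -2\mu_a$. This is the mirror image of the inequality $\frac{\mr d}{\mr dt}(v_a^2)_{\min}\ge -2\mu_a$ already used to prove Lemma~\ref{C0 estimates}. Integrating both inequalities gives
\[
 (v_a^2)_{\max}(t)\le (v_a^2)_{\max}(0)-2\mu_a t,\qquad
 (v_a^2)_{\min}(t)\ge (v_a^2)_{\min}(0)-2\mu_a t,
\]
valid on any interval of smooth existence.

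The second step applies the upper bound with $a=1$. Since all $\mu_a>0$ (otherwise Assumption~\ref{single fiber pinching} is vacuous in the denominator), $(v_1^2)_{\max}(t)$ must reach $0$ no later than $T_1^{\mathrm{up}}:=(v_1)_{\max}^2(0)/(2\mu_1)$. Because $0\le (v_1)_{\min}(t)\le (v_1)_{\max}(t)$, this forces a first time $T\le T_1^{\mathrm{up}}$ at which $\liminf_{t\to T}\min_{x\in\mc B^n} v_1(x,t)=0$, proving the first assertion. The third step applies the lower bound with $a\ge2$. Assumption~\ref{single fiber pinching} gives $(v_a^2)_{\min}(0)/(2\mu_a)\ge 2T_1^{\mathrm{up}}$, hence $2\mu_a T_1^{\mathrm{up}}\le \tfrac12(v_a^2)_{\min}(0)$, so for every $t\in[0,T)\subseteq[0,T_1^{\mathrm{up}}]$,
\[
 (v_a^2)_{\min}(t)\;\ge\;(v_a^2)_{\min}(0)-2\mu_a t\;\ge\;\tfrac12(v_a^2)_{\min}(0).
\]
Setting $\delta:=\min_{a\ge2}\sqrt{(v_a^2)_{\min}(0)/2}>0$ yields the second assertion.

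The only obstacle worth flagging is the usual one in Ricci flow arguments: one must know that a smooth solution exists on an interval long enough for the above pointwise comparisons to apply, i.e.\ up to and including the first crush time of $v_1$. This is not really an obstacle in our setting because the a priori bounds above keep all $v_a$ uniformly bounded above, and all $v_a$ with $a\ge 2$ uniformly bounded below, so the multiply warped product structure degenerates only through $v_1$; any hypothetical earlier breakdown would still occur after $v_1$ has started to pinch, and we would simply redefine $T$ as the maximal existence time, with the lower bound on the $v_a$ for $a\ge2$ remaining intact by the same calculation.
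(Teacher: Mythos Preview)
Your proof is correct and follows essentially the same approach as the paper: both derive the two-sided ODE comparison $\frac{\mr d}{\mr dt}(v_a^2)_{\max}\le-2\mu_a$ and $\frac{\mr d}{\mr dt}(v_a^2)_{\min}\ge-2\mu_a$ from~\eqref{RF fibers}, use the upper bound with $a=1$ to obtain $T\le (v_1^2)_{\max}(0)/(2\mu_1)$, and then invoke Assumption~\ref{single fiber pinching} together with the lower bound for $a\ge2$ to get the uniform positive bound. The only cosmetic differences are that you write out the evolution of $v_a^2$ explicitly and arrive at $\delta=\min_{a\ge2}\sqrt{(v_a^2)_{\min}(0)/2}$, whereas the paper expresses the same bound as $\delta=\min_{a\ge2}\frac{\mu_a}{\mu_1}(v_1^2)_{\max}(0)$; your closing remark about short-time existence is a helpful observation that the paper defers to Theorem~\ref{Hessian bound}.
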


\begin{proof}
By the maximum principle applied to the evolution equation~\eqref{RF fibers} satisfied by $v_a$, we have
\[
\frac{d}{dt} (v_{a}^2)_{\min} \ge -2\mu_{a},
\]
for $a\in \{2,3,\dots,A\}$, yielding
\begin{equation}			\label{eq-lower-v}
(v_a^2)_{\min}(t) \ge (v_a)_{\min}^2(0) -2 \mu_a t = 2\mu_a\, \Big(\frac{(v_a^2)_{\min}(0)}{2\mu_a} - t \Big).
\end{equation}

On the other hand, we also have
\[
\frac{d}{dt}(v_1^2)_{\max} \le- 2\,\mu_1,
\]
implying that
\[
(v_1)_{\max}^2(t) \le 2\mu_1\,\Big(\frac{(v_1^2)_{\max}(0)}{2\mu_1} - t\Big).
\]
This implies that there exists a finite time
\[
T \le \frac{(v_1^2)_{\max}(0)}{2\mu_1} < \infty\qquad\mbox{ at which }\qquad
\liminf_{t\to T} \big\{ \min_{s\in\mb S^1} v_1(s,t)\big\} = 0.
\]

Finally, Assumption~\ref {single fiber pinching} and inequality~\eqref{eq-lower-v}  together imply that for all $t \in [0,T)$ and all $a\in \{2,3, \dots, A\}$, we have
\[
(v_{a}^2)_{\min}(t) \ge 2\mu_{a}\, \Big(\frac{(v_{a}^2)_{\min}(0)}{2\mu_{a}} - T\Big) \ge 2\mu_{a}\, \Big(2\frac{(v_1^2)_{\max}(0)}{2\mu_1} - T\Big)
	\ge \frac{\mu_{a}}{\mu_1}\,(v_1^2)_{\max}(0) \ge \delta,
\]
where $\delta := \min_{a\in \{2,\dots, A\}} \frac{\mu_a}{\mu_1} (v_1^2)_{\max}(0)>0$. 
\end{proof}

\subsection{A scale-invariant $C^1$ estimate}

\begin{lemma}		\label{C1 estimate}
If $n_a\geq2$, then for as long as a solution exists,
\[
|\cv v_a|^2_{\max}(t)\leq\max\Big\{ |\cv v_a|^2_{\max}(0), \;\frac{\mu_a}{n_a-1}\Big\}.
\]
\end{lemma}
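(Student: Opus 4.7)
The plan is to apply the parabolic maximum principle to the scalar $\Phi_a := |\cv v_a|^2$. Since $v_a$ depends only on the base, this quantity agrees with $|\check\cv v_a|^2_{\check g}$, but it is convenient to set up a Bochner-type identity using the full metric $g(t)$, because~\eqref{RF fibers} is written with the full Laplacian $\Delta$.

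First I would record the identity
\[
(\partial_t-\Delta)|\cv v_a|^2 = (\partial_t g^{ij})\partial_i v_a\,\partial_j v_a +2\lp\cv v_a,\cv(\partial_t v_a-\Delta v_a)\rp -2|\cv^2 v_a|^2-2\Rc(\cv v_a,\cv v_a),
\]
obtained by combining Bochner's formula with the product rule in time. Because $\cv v_a$ has only base components, the base block of $\Rc$ on the multiply warped product, namely $\check\Rc_{ij}-\sum_b n_b v_b^{-1}(\check\cv^2 v_b)_{ij}$, is exactly $-\tfrac12\partial_t\check g_{ij}$ by~\eqref{RF base}. A short calculation then shows that the $(\partial_t g^{ij})\partial_i v_a\,\partial_j v_a$ term cancels the $-2\Rc(\cv v_a,\cv v_a)$ term exactly.

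Next, from~\eqref{RF fibers} we have $\partial_t v_a-\Delta v_a=-(\mu_a+\Phi_a)/v_a$; differentiating and pairing with $\cv v_a$ produces a transport term $-\tfrac{2}{v_a}\lp\cv v_a,\cv\Phi_a\rp$ and a reaction term $\tfrac{2(\mu_a+\Phi_a)\Phi_a}{v_a^2}$. For the Hessian, the warped-product connection formula $\cv_V W=\hat\cv_V W-(g(V,W)/v_a)\check\cv v_a$ for $V,W\in T\mc F_a$ gives $(\cv^2 v_a)(V,W)=v_a^{-1}|\check\cv v_a|^2\,g(V,W)$ on fiber-$a$ vectors. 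Tracing this block alone against $g^{-1}$ produces
\[
|\cv^2 v_a|^2 \ \ge\ n_a v_a^{-2}|\cv v_a|^4 \ =\ n_a v_a^{-2}\Phi_a^2,
\]
where the remaining components of $\cv^2 v_a$ only add nonnegative contributions.

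Putting everything together, at any spatial maximum of $\Phi_a$, where $\cv\Phi_a=0$ and $\Delta\Phi_a\le 0$, the evolution inequality collapses to
\[
\frac{d}{dt}(\Phi_a)_{\max} \ \le\ \frac{2\Phi_a}{v_a^2}\bigl[\mu_a-(n_a-1)\Phi_a\bigr],
\]
understood in the sense of upper Dini derivatives. This right-hand side is strictly negative whenever $\Phi_a>\mu_a/(n_a-1)$, so by Hamilton's trick the constant $K:=\max\{|\cv v_a|^2_{\max}(0),\,\mu_a/(n_a-1)\}$ is a barrier that $(\Phi_a)_{\max}(t)$ can never cross. The main technical step will be confirming the Hessian lower bound $|\cv^2 v_a|^2\ge n_a v_a^{-2}\Phi_a^2$ via the fiber-$a$ block of $\cv^2 v_a$, since this is where the assumption $n_a\ge 2$ enters; once that inequality is in hand, the remainder is the standard parabolic maximum-principle comparison.
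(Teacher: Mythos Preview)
Your argument is correct and follows essentially the same route as the paper. Both proofs compute $(\partial_t-\Delta)|\cv v_a|^2$, invoke the lower bound $|\cv^2 v_a|^2\ge n_a v_a^{-2}|\cv v_a|^4$ coming from the fiber-$a$ block of the Hessian (this is exactly the consequence of~\eqref{Tensor norm} that the paper uses), and then read off $\frac{d}{dt}(\Phi_a)_{\max}\le 2v_a^{-2}\Phi_a\big(\mu_a-(n_a-1)\Phi_a\big)$ at a spatial maximum; the only cosmetic difference is that the paper imports the evolution equation from Appendix~C of~\cite{CIKS22}, whereas you derive it directly from Bochner plus the Ricci-flow cancellation $\partial_t g^{ij}=2R^{ij}$.
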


\begin{proof}
Consulting Appendix C of~\cite{CIKS22} and using the consequence of~\eqref{Tensor norm} that
\[
|\cv^2v_a|^2\geq|\check\cv^2v_a|_{\check g}^2+n_a\frac{|\cv v_a|^4}{v_a^2},
\]
we find that
\begin{align*}
\partial_t (|\cv v_a|^2)	&=\Delta(|\cv v_a|^2)-\frac12v_a^{-2}|\cv^2(v_a^2)|^2+v_a^{-2}|\cv v_a|^2(2\mu_a+4|\cv v_a|^2)\\
	&=\Delta(|\cv v_a|^2)-2|\cv^2v_a|^2-4\frac{\cv^2v_a(\cv v_a,\cv v_a)}{v_a}+2\frac{|\cv v_a|^2}{v_a^2}(\mu_a+|\cv v_a|^2)\\
			&\leq\Delta(|\cv v_a|^2)-2|\check\cv^2v_a|_{\check g}^2-2\frac{\lp\cv|\cv v_a|^2,\cv v_a\rp}{v_a}
			+2\frac{|\cv v_a|^2}{v_a^2}\big(\mu_a-(n_a-1)|\cv v_a|^2\big),
\end{align*}
which shows that $|\cv v_a|^2$ cannot increase at a maximum if $|\cv v_a|^2\geq\mu_a/(n_a-1)$.
\end{proof}

\subsection{The curvature of $(\mc B^2,\check g)$}
In order to derive $C^2$ estimates for the warping functions, we need an estimate for the curvature of the base.
Here, we derive such an estimate for a base surface. The proof simplifies if one uses the gauged system~\eqref{RF gauged system}.

If the base is two-dimensional, equation~\eqref{Base R evolution} becomes
\begin{equation}	\label{R_t on surface}
\big(\partial_t-\check\Delta\big)\check R = \check R^2-2\check R\sum_a n_a|\cv w_a|^2
	+2\sum_an_a\big\{(\check\Delta w_a)^2-|\check\cv^2 w_a|^2\big\},
\end{equation}
where we recall that $w_a=\log v_a$.
We define
\[
p:=\sum_a n_a|\cv w_a|^2 \qquad\mbox{ and }\qquad  f:=\check R+2 p,
\]
so that we may use  $f$ as an upper bound for $\check R$. 
\medskip

We define an initial metric on a warped product with a two-dimensional base to be \emph{$\eta$-tame} if $f_{\max}(0)\leq\eta$.

\begin{assumpA}		\label{eta tame}
Our initial data is $\eta$-tame for $0<\eta\ll1$, to be determined.
\end{assumpA}

Note that we can always satisfy Assumption~\ref{eta tame} simply by taking a sufficiently large homothetic dilation of $\big(\mc B^2,\check g(0)\big)$.
\medskip

We begin by deriving an upper bound.

\begin{lemma} \label{lemma-checkR-upper}
If the base is two-dimensional and Assumptions~\ref{single fiber pinching} and~\ref{eta tame} hold for some $\eta$ sufficiently small,
then there exists $C_0$ depending only on the initial data such that for as long as a smooth solution exists,
\[
\check R\leq\max\Big\{C_0, \;\frac{2\mu_1}{3v_1^2}\Big\}.
\]
\end{lemma}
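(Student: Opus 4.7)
The plan is to apply the parabolic maximum principle to $f=\check R+2p$, using Assumption~\ref{eta tame} and Lemmas~\ref{C0 estimates}, \ref{lemma-initial}, and~\ref{C1 estimate} to convert the bad forcing coming from the shrinking fiber $v_1$ into a quantity comparable to the barrier $\max\{C_0,\,2\mu_1/(3v_1^2)\}$. The choice of coefficient $2$ in $f$ is tuned so that, on a two-dimensional base, the $\check R|\cv w_a|^2$ term produced by the evolution of $\check g^{ij}$ cancels against the Bochner contribution.

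First, I derive the evolution of $f$. Equation~\eqref{R_t on surface} supplies $(\partial_t-\check\Delta)\check R$. For $|\cv w_a|^2$, using the gauged system~\eqref{RF gauged system}, the identity $\partial_t\check g^{ij}=\check R\check g^{ij}-2\sum_b n_b\cv^iw_b\cv^jw_b$ (valid on a two-dimensional base since $\check{\Rc}=\tfrac12\check R\check g$), and the Bochner formula on a surface, a clean cancellation yields
\[
(\partial_t-\check\Delta)|\cv w_a|^2 = -2|\check\cv^2 w_a|^2 + 4\mu_a e^{-2w_a}|\cv w_a|^2 - 2\sum_b n_b\langle\cv w_a,\cv w_b\rangle^2.
\]
Multiplying by $2n_a$, summing, and combining with~\eqref{R_t on surface} gives
\begin{align*}
(\partial_t-\check\Delta)f = \check R^2-2\check R p &+ 2\sum_a n_a\bigl((\check\Delta w_a)^2-3|\check\cv^2 w_a|^2\bigr) \\
&- 4\sum_{a,b} n_a n_b\langle\cv w_a,\cv w_b\rangle^2 + 8\sum_a n_a\mu_a e^{-2w_a}|\cv w_a|^2.
\end{align*}

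Second, I exploit that the base is two-dimensional. The pointwise bound $(\check\Delta w_a)^2\leq 2|\check\cv^2 w_a|^2$ makes the Hessian line non-positive. Since the Gram matrix $\bigl[\sqrt{n_an_b}\langle\cv w_a,\cv w_b\rangle\bigr]_{a,b}$ is positive semidefinite of rank at most $\dim\mc B^2=2$, we have $\sum_{a,b}n_an_b\langle\cv w_a,\cv w_b\rangle^2\geq\tfrac12 p^2$, hence
\[
(\partial_t-\check\Delta)f \leq \check R^2-2\check R p-2p^2+8\sum_a n_a\mu_a e^{-2w_a}|\cv w_a|^2.
\]
To handle the last term, Lemma~\ref{lemma-initial} gives $v_a\geq\delta>0$ on $[0,T)$ for $a\geq 2$, so together with Lemma~\ref{C1 estimate} the $a\geq 2$ summands contribute a constant $K$ depending only on initial data. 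Shrinking $\eta$ in Assumption~\ref{eta tame} further ensures $|\cv v_1|^2_{\max}(0)\leq \mu_1/(n_1-1)$, so Lemma~\ref{C1 estimate} upgrades to $|\cv w_1|^2\leq \mu_1/((n_1-1)v_1^2)$ globally, and the $a=1$ summand is at most $8n_1\mu_1^2/\bigl((n_1-1)v_1^4\bigr)$.

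Finally, I close via a barrier argument. The barrier varies in space, so I work with $F:=\check R-(2\mu_1/3)\,v_1^{-2}$ and compute its evolution using the display above and $(\partial_t-\check\Delta)(1/v_1^2)=2\mu_1/v_1^4-4|\cv w_1|^2/v_1^2-2\sum_a n_a\langle\cv w_a,\cv w_1\rangle/v_1^2$. Choosing $C_0$ so large that $F(0)\leq C_0$ and that $K$ together with cross terms are absorbed, I apply the maximum principle: at any first time and spatial point where $F=C_0$, the dissipative $-2\check R p-2p^2$ (bounded below in magnitude using $p\geq n_1|\cv w_1|^2$ and $\check R\geq 2\mu_1/(3v_1^2)$), combined with the $-\bigl(4\mu_1^2/(3v_1^4)\bigr)$ contribution arising from $-(2\mu_1/3)(\partial_t-\check\Delta)(1/v_1^2)$, is designed to exceed the bad forcing $8n_1\mu_1^2/((n_1-1)v_1^4)$, yielding a contradiction.

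The main obstacle is the sharp numerical balancing that produces the coefficient $2/3$: the dissipative and forcing terms both scale as $\mu_1^2/v_1^4$, so propagating the bound requires careful bookkeeping in which $\eta$-tameness supplies an arbitrarily small perturbation parameter used to absorb lower-order cross terms. This calibration fixes both the choice of $2/3$ and the dependence of $C_0$ on the initial data.
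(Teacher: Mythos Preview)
Your derivation of the evolution of $f=\check R+2p$ is correct and matches the paper's computation. The gap is in the closing barrier argument.

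Once you replace $|\cv w_1|^2$ by its worst case $\mu_1/((n_1-1)v_1^2)$, the balance you describe cannot be achieved. After your bound the forcing is $8n_1\mu_1^2/((n_1-1)v_1^4)\ge 16\mu_1^2 v_1^{-4}$ (since $n_1\ge 2$), while the good piece you extract from $-(2\mu_1/3)(\partial_t-\check\Delta)(v_1^{-2})$ is only $-4\mu_1^2/(3v_1^4)$; moreover that same subtraction reintroduces a \emph{positive} $(8\mu_1/3)v_1^{-2}|\cv w_1|^2$ from the $-4v_1^{-2}|\cv w_1|^2$ term in the evolution of $v_1^{-2}$, and the quadratic $\check R^2$ term contributes yet another $\ge (2\mu_1/3)^2 v_1^{-4}$ at the contact point. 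The dissipation $-2\check R p$ with $\check R\ge 2\mu_1/(3v_1^2)$ and $p\ge n_1|\cv w_1|^2$ yields only $-(4n_1\mu_1/3)v_1^{-2}|\cv w_1|^2$, which falls well short of the $8n_1\mu_1 v_1^{-2}|\cv w_1|^2$ forcing. So the inequality $(\partial_t-\check\Delta)F\le 0$ at a first touching point is simply false with these coefficients, and no choice of $C_0$ repairs it. (Incidentally, shrinking $\eta$ does not force $|\cv v_1|^2_{\max}(0)\le\mu_1/(n_1-1)$: $\eta$-tameness bounds $p=\sum n_a v_a^{-2}|\cv v_a|^2$, not $|\cv v_1|^2$, and it is achieved by dilation which leaves $|\cv v_1|$ invariant.)

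The paper's argument avoids this by \emph{not} estimating $|\cv w_1|^2$ away. It keeps the forcing in the form $Cn_1\mu_1 v_1^{-2}|\cv w_1|^2$ and pairs it directly against the $-6f\cdot n_1|\cv w_1|^2$ piece of the cross term $-6fp$, obtaining a factor $(2\mu_1-3fv_1^2)v_1^{-2}|\cv w_1|^2$ whose sign is governed by whether $f\ge 2\mu_1/(3v_1^2)$. In that regime the dangerous term is nonpositive and one is left with $(\partial_t-\check\Delta)f\le C_2 f^2+C_\delta$; here $\eta$-tameness enters, via ODE comparison on $[0,T]$, to keep $f_{\max}$ bounded by a constant $C_0$. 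In the complementary regime one already has $f\le 2\mu_1/(3v_1^2)$. This dichotomy, rather than a subtracted barrier, is what produces the constant $2/3$ and makes the argument close.
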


\begin{proof}
By~\eqref{RF fiber alt}, one has $\big(\partial_t-\check\Delta\big)w_a	= -\mu_a v_a^{-2}$,
and by the Bochner formula, one has
\begin{align*}
\big(\partial_t-\check\Delta\big)p	&=\sum_a n_a
\Big\{-2|\check\cv^2 w_a|^2-2\check\Rc(\cv w_a,\cv w_a)+4\mu_a v_a^{-2}|\cv w_a|^2\\
	&\qquad\qquad + 2\check\Rc(\cv w_a,\cv w_a) - 2\sum_b n_b
	\lp \cv w_a,\cv w_b \rp^2 \Big\}\\ 
&= -2\sum_a n_a |\check\cv^2 w_a|^2
+ 4\sum_a n_a\mu_av_a^{-2}|\cv w_a|^2
- 2\sum_{a,b}n_an_b \lp \cv w_a,\cv w_b \rp^2.
\end{align*}

By Lemma~\ref{lemma-initial}, there exist $T$ and $\delta$ such that $(v_1)_{\min}\searrow0$ as $t\nearrow T$,
but $(v_a)_{\min}\geq\delta$ as $t\nearrow T$ for all $a\in\{2,\dots,A\}$. Thus by the Cauchy--Schwarz inequality, we have
\begin{align*}
\big(\partial_t-\check\Delta\big)f	&= (\check R^2-2\check Rp)
+2\sum_an_a\big\{(\check\Delta w_a)^2-|\check\cv^2 w_a|^2\big\}\\ 
&\quad -2\sum_a n_a |\check\cv^2 w_a|^2
+ 4\sum_a n_a\mu_av_a^{-2}|\cv w_a|^2
- 2\sum_{a,b}n_an_b \lp \cv w_a,\cv w_b \rp^2\\
&\leq \check R^2 -2\check R p   + 4n_1 \mu_1 v_1^{-2}|\cv w_1|^2 + C_\delta\\
&= (f-2p)^2-2(f-2p)p + 4n_1 \mu_1 v_1^{-2}|\cv w_1|^2 + C_\delta\\
 &= f^2+8p^2-6fp + 4n_1 \mu_1 v_1^{-2}|\cv w_1|^2 + C_\delta\\
 &\le 2 f^2+20p^2 +C_\delta+2n_1(2\mu_1-3fv_1^2)\,v_1^{-2}|\cv w_1|^2,
 \end{align*}
where $C_\delta$ is independent of $T$.

Here, we make the following observation: if $f\ge \frac{2\mu_1}{3v_1^2}$ at any point, then by Assumption~\ref{single fiber pinching} and Lemma~\ref{C1 estimate},
the inequality
\begin{equation}		\label{f majorizes h}
	0<p \le C_0 v_1^{-2} \le \frac{3C_1}{2\mu_1}f
\end{equation}
holds at that point.

We now let $\phi$ solve the \textsc{ode} $\phi'(t)=C_2\phi^2(t)+C_\delta$ with the initial value $\phi(0)=f_{\max}(0)$, where $C_2=2+45C_1^2/\mu_1^2$.
By Assumption~\ref{eta tame}, we may assume that $\phi(T)\leq C_0$. Then there are two cases:
\begin{enumerate}

\item If $2\mu_1-3fv_1^2\leq0$ wherever $f_{\max}(t)$ is attained, then by estimate~\eqref{f majorizes h} above, we have
\[
\big(\partial_t-\check\Delta\big)f
\le C_2f^2 +C_\delta.
\]
Then the maximum principle implies that $f_{\max}(t)\leq\phi(t)\leq\phi(T)$ stays bounded.

\item  If $2\mu_1-3fv_1^2>0$ at any point that $f_{\max}(t)$ is attained, then we have $f_{\max} \le \frac{2}{3} \mu_1v_1^{-2}$ at that point.

\end{enumerate}
Because $\check R\leq f$, this concludes the proof of the lemma.
\end{proof}

An analogous lower bound is even easier to obtain, albeit at the cost of a larger constant.
To serve as a lower bound for $\check R$, we  define $\tilde f:=\check R-p$.

\begin{lemma} 	\label{lemma-checkR-lower}
If the base is two-dimensional and Assumption~\ref{single fiber pinching} holds,
there exists $C_1$ depending only on the initial data such that for as long as a smooth solution exists,
\[
\check R\geq-\frac{C_1}{v_1^2}.
\]
\end{lemma}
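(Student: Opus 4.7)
The plan is to introduce the auxiliary quantity $\tilde f := \check R - p$, where $p = \sum_a n_a |\cv w_a|^2$. Because $p\geq 0$, we have $\check R \geq \tilde f$, so a pointwise bound $\tilde f \geq -C_1/v_1^2$ is sufficient. The advantage of working with $\tilde f$ rather than $\check R$ directly is that when one subtracts the Bochner-based evolution of $p$ computed in the proof of Lemma~\ref{lemma-checkR-upper} from the evolution~\eqref{R_t on surface} of $\check R$, the sign-indefinite Hessian-squared terms $|\check\cv^2 w_a|^2$ cancel exactly. Using $\check R = \tilde f + p$ to rewrite $\check R^2 - 2\check R p = \tilde f^2 - p^2$, I obtain the clean evolution
\[
(\partial_t - \check\Delta)\tilde f = \tilde f^2 - p^2 + 2\sum_a n_a (\check\Delta w_a)^2 - 4\sum_a n_a \mu_a v_a^{-2}|\cv w_a|^2 + 2\sum_{a,b}n_a n_b \lp\cv w_a,\cv w_b\rp^2.
\]

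Next I invoke Lemmas~\ref{C0 estimates},~\ref{lemma-initial}, and~\ref{C1 estimate}, together with Assumption~\ref{single fiber pinching}: they give $|\cv v_a|^2\le K$ for every $a$ and a uniform positive lower bound $v_a\ge \delta$ for $a\in\{2,\dots,A\}$. Hence only the $v_1$--terms can blow up, and a direct computation yields
\[
p^2 + 4\sum_a n_a \mu_a v_a^{-2}|\cv w_a|^2 \leq C_\ast\, v_1^{-4} + C_\ast',
\]
with $C_\ast, C_\ast'$ depending only on the initial data. Discarding the nonnegative cross terms but keeping the crucial $\tilde f^2$ summand gives
\[
(\partial_t - \check\Delta)\tilde f \geq \tilde f^2 - C_\ast v_1^{-4} - C_\ast'.
\]

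The core step is to apply the minimum principle to the scale-invariant quantity $H := v_1^2 \tilde f$. From~\eqref{RF fiber alt} one finds $(\partial_t - \check\Delta) v_1^2 = -2\mu_1 - 4|\cv v_1|^2$. At a spatial minimum of $H$, the first-order relation $v_1\, \cv \tilde f = -2\tilde f\, \cv v_1$ lets the cross-gradient terms collapse, producing
\[
(\partial_t - \check\Delta)H\big|_{\mathrm{sp.\,min}} = (-2\mu_1 + 4|\cv v_1|^2)\,\tilde f + v_1^2\,(\partial_t - \check\Delta)\tilde f.
\]
Substituting the inequality for $(\partial_t - \check\Delta)\tilde f$, using the key identity $v_1^2 \tilde f^2 = H^2/v_1^2$, and bounding $|\tilde f| = |H|/v_1^2$ at points where $H\le 0$, I arrive at
\[
(\partial_t - \check\Delta)H\big|_{\mathrm{sp.\,min}} \geq \frac{H^2 + \tilde K H - C}{v_1^2} - C'',
\]
with $\tilde K, C, C''$ depending only on the initial data. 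Once $H_{\min}$ drops below the negative root of $x^2 + \tilde K x - C$, the numerator is nonnegative and Hamilton's trick yields $\partial_t H_{\min} \geq -C''$, so $H_{\min}$ can only decrease linearly in time. Integrating over the finite existence interval $[0,T)$ provided by Lemma~\ref{lemma-initial} gives $H\ge -C_1$, so $\tilde f \ge -C_1/v_1^2$ and the conclusion $\check R \geq -C_1/v_1^2$ follows.

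The main obstacle is the interplay at a spatial minimum between the linear cross term $(-2\mu_1 + 4|\cv v_1|^2)\tilde f$, which scales like $|H|/v_1^2$ and is very negative when $\tilde f$ is very negative, and the reaction $v_1^2\tilde f^2 = H^2/v_1^2$. Retaining the quadratic $\tilde f^2$ in the evolution of $\tilde f$---rather than discarding it as one is tempted to do when seeking a lower bound---is what allows the $H^2/v_1^2$ term to dominate the linear coupling and close the minimum principle argument.
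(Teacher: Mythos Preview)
Your proposal is correct and follows essentially the same approach as the paper: both introduce $\tilde f=\check R-p$, exploit the cancellation of the Hessian-squared terms to obtain $(\partial_t-\check\Delta)\tilde f\ge \tilde f^{2}-C'_\delta v_1^{-4}$, and then conclude via a minimum-principle comparison. The paper's proof simply notes that the reaction term is positive once $\tilde f<-\sqrt{C'_\delta}\,v_1^{-2}$ and leaves the barrier step implicit, whereas you make that step explicit by passing to the scale-invariant quantity $H=v_1^{2}\tilde f$; this is a harmless (and arguably cleaner) elaboration of the same argument.
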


\begin{proof}
As in the proof of Lemma~\ref{lemma-checkR-upper}, one finds by using Lemma~\ref{lemma-initial} and Lemma~\ref{C1 estimate} that
\[
\big(\partial_t-\check\Delta\big)\tilde f	 \geq \tilde f^2-p^2 -  4n_1\mu_1v_1^{-2}|\cv w_1|^2-C_\delta
	\geq\tilde f^2 - C'_\delta v_1^{-4},
\]
where $C_\delta,\,C'_\delta$ depend only on the initial data. The \textsc{rhs} is positive if $f<-\sqrt{C'_\delta}v_1^{-2}$.
The result follows because $\check R\geq\tilde f$.
\end{proof}

\subsection{$C^2$ estimates}

To state our $C^2$ bound for $v_a$, for each $a\in\{1,\dots,A\}$, we define
\[
\chi_a:=|\cv^2 v_a|^2\quad\mbox{ which is bounded above by }\quad F:=\sum_b (B+|\cv v_b|^2)\chi_b,
\]
where $B>1$ is a large positive constant to be determined below.

In Appendix~\ref{Hessian evolution}, we derive that the norm of each covariant Hessian evolves as follows.
(Here and below, we use subscripted variables to denote partial derivatives.)

\begin{lemma}		\label{alt chi evolution}
Each quantity $\chi_a=|\cv^2 v_a|^2$ evolves by
\begin{align*}
(\chi_a)_t	&= \Delta\chi_a - 2|\cv^3 v_a|^2 - 2\,\boxed{\left\lp\cv^2 v_a,  \cv^2\Big(\frac{\mu_a+|\cv v_a|^2}{v_a}\Big)\right\rp}\\
	&\quad +4\check{\Rm}(\check\cv^2v_a,\check\cv^2v_a) -4\sum_b n_b v_b^{-2}\lp\cv v_a,\cv v_b\rp\lp\check\cv^2 v_a,\check\cv^2 v_b\rp_{\check g}\\
	&\quad +4\sum_b n_b\mu_b v_b^{-4}\lp\cv v_a,\cv v_b\rp^2
		-4\sum_b n_b(n_b-1)v_b^{-4}|\cv v_b|^2\lp\cv v_a,\cv v_b\rp^2\\
	&\quad -4\sum_b \sum_{c\neq b}
		n_b  n_c\,v_b^{-2} v_c^{-2} \lp\cv v_a,\cv v_b\rp \lp\cv v_b,\cv v_c\rp \lp\cv v_c,\cv v_a\rp,
\end{align*}
where the reaction term is boxed above.
\end{lemma}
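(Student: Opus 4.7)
The plan is to combine a Bochner-type computation on a Ricci-flowing background with the explicit curvature formulas for a multiply warped product. For any scalar function $u$ on a metric evolving by $\partial_t g = -2\Rc$, one commutes $\partial_t$ and $\Delta$ past covariant derivatives using Hamilton's identity $\partial_t\Gamma^k_{ij} = -g^{k\ell}(\cv_i R_{j\ell}+\cv_j R_{i\ell}-\cv_\ell R_{ij})$, the second Bianchi identity, and $\partial_t g^{ij} = 2R^{ij}$, to obtain
\[
(\partial_t-\Delta)|\cv^2 u|^2 = -2|\cv^3 u|^2 + 2\lp\cv^2 u,\,\cv^2(u_t-\Delta u)\rp + \mc R(u,\Rm,\Rc),
\]
where $\mc R$ is quadratic in $\cv^2 u$ and in $\cv u$ and involves full-metric contractions against $\Rm$ and $\Rc$.

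Applied to $u=v_a$, the first two terms are immediate: equation~\eqref{RF fibers} gives $u_t-\Delta u = -(\mu_a+|\cv v_a|^2)/v_a$, which reproduces the boxed reaction term. The remaining task is to reduce $\mc R(v_a,\Rm,\Rc)$ to the explicit expressions displayed in the statement. Here I would invoke the multiply warped product curvature identities from Appendix~\ref{Geometric basics}. Because $v_a$ depends only on the base, $\cv v_a$ is horizontal, so each contraction $\Rm(\cv^2 v_a,\cv^2 v_a)$ and $\Rc(\cv v_a,\cv v_a)$ decomposes into a pure-base piece (yielding the $4\check\Rm(\check\cv^2 v_a,\check\cv^2 v_a)$ term) plus sums over fibers, each weighted by a factor of $n_b$ coming from the trace of $\hat g_b$. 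The mixed curvature components of the form $R_{i\alpha\beta j}=-v_b^{-1}(\check\cv^2 v_b)_{ij}(g_b)_{\alpha\beta}$ generate the Hessian-Hessian cross terms $n_b v_b^{-2}\lp\cv v_a,\cv v_b\rp\lp\check\cv^2 v_a,\check\cv^2 v_b\rp_{\check g}$, while the fiber-only Ricci components generate the quartic gradient terms with coefficients $n_b\mu_b v_b^{-4}$ and $n_b(n_b-1)v_b^{-4}|\cv v_b|^2$. The cubic triple-fiber term $n_bn_c v_b^{-2}v_c^{-2}\lp\cv v_a,\cv v_b\rp\lp\cv v_b,\cv v_c\rp\lp\cv v_c,\cv v_a\rp$ arises from $[\partial_t,\cv^2]v_a$ once one rewrites $\partial_t\check g$ via the gauged form~\eqref{RF base alt}, whose correction $2\sum_b n_b\cv w_b\ten\cv w_b$ produces precisely these triple products after substituting $\cv w_b = v_b^{-1}\cv v_b$.

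The main obstacle is the combinatorial bookkeeping in the last step: keeping track of which power of $v_b$ comes from which source (the inverse fiber metric, the warping identity $\cv w_b = v_b^{-1}\cv v_b$, or the Einstein normalization $\mu_b$ of $\hat g_b$) and distinguishing the diagonal $b=c$ contributions from the off-diagonal $b\neq c$ cubic contributions. Modulo that algebra, the formula follows directly from the generic Bochner identity above and the curvature decomposition recalled in Appendix~\ref{Geometric basics}.
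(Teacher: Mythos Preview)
Your overall strategy matches the paper's: both compute $(\partial_t-\Delta)|\cv^2 v_a|^2$ by commuting the heat operator past $\cv^2$, isolating the reaction term $-2\lp\cv^2 v_a,\cv^2 Z_a\rp$ from~\eqref{RF fibers}, and then expanding a Riemann-curvature contraction using the warped-product formulas in Appendix~\ref{Curvature formulas}. The paper phrases the commutator via the Lichnerowicz Laplacian, $(\partial_t-\Delta_\ell)\cv^2 v_a=\cv^2(\partial_t-\Delta)v_a$, and then converts $\Delta_\ell$ to the rough Laplacian; the Ricci terms in that conversion cancel exactly against the $\partial_t g^{-1}$ contributions when you pass to $|\cv^2 v_a|^2$, leaving only $4R_{IPQJ}(\cv^2 v_a)^{PQ}(\cv^2 v_a)^{IJ}$. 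So your $\mc R$ should contain \emph{only} a full-metric $\Rm$ contraction, not a separate $\Rc$ piece, and it is quadratic in $\cv^2 v_a$ alone; the first-order quantities $\cv v_b$ enter afterward, through the fiber components of $\cv^2 v_a$ given by~\eqref{Hessian}.

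There is one concrete misstep. You attribute the triple-product term $n_b n_c v_b^{-2}v_c^{-2}\lp\cv v_a,\cv v_b\rp\lp\cv v_b,\cv v_c\rp\lp\cv v_c,\cv v_a\rp$ to $[\partial_t,\cv^2]v_a$ together with the gauged base evolution~\eqref{RF base alt}. That cannot be right: equation~\eqref{RF fibers}, which you use for the reaction term, belongs to the ungauged system~\eqref{RF full system}, while~\eqref{RF base alt} belongs to the gauged system~\eqref{RF gauged system}; mixing them is inconsistent. In the paper's derivation this cubic term comes instead from the same $\Rm$ contraction as everything else, specifically from the off-diagonal fiber--fiber curvature components~\eqref{fiber-other} paired (via the Kulkarni--Nomizu identity~\eqref{KN mess}) with the fiber part of $\cv^2 v_a$ from~\eqref{Hessian}. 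Once you keep the whole computation in the ungauged system and recognize that all lower-order terms flow from the single contraction $4\Rm(\cv^2 v_a,\cv^2 v_a)$, the bookkeeping you flag as the main obstacle becomes the four-piece decomposition $A+B+C+D$ carried out in Appendix~\ref{Hessian evolution}.
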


\begin{theorem}		\label{Hessian bound}
Suppose that the base is one-dimensional, or else that the base is two-dimensional and Assumptions~\ref{single fiber pinching} and~\ref{eta tame} hold for some 
$\eta$ small enough so that Lemma~\ref{lemma-checkR-upper} applies.

Then there exist positive constants $C_*,C^*$ depending only on the initial data such that:
\[
F_t\leq\Delta F+C_*\sum_a v_a^{-4}\qquad\Rightarrow\qquad
\frac{\mr d}{\mr dt}F_{\max}\leq C^*\big(\min\{v_a\}\big)^{-4}.
\]
In particular, the flow exists until the first time $T>0$ that some $v_a=0$.
\end{theorem}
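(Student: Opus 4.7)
The plan is to apply the product rule to $F=\sum_b (B+|\cv v_b|^2)\chi_b$, combining the evolution of $\chi_b$ from Lemma~\ref{alt chi evolution} with the evolution of $|\cv v_b|^2$ obtained in the proof of Lemma~\ref{C1 estimate}. First I would compute
\[
F_t-\Delta F
=\sum_b\chi_b\bigl[(|\cv v_b|^2)_t-\Delta|\cv v_b|^2\bigr]+\sum_b(B+|\cv v_b|^2)\bigl[(\chi_b)_t-\Delta\chi_b\bigr]-2\sum_b\lp\cv|\cv v_b|^2,\cv\chi_b\rp.
\]
The first bracket contributes $-2\chi_b$ plus lower-order terms (controllable by Lemmas~\ref{C0 estimates} and~\ref{C1 estimate}), yielding the crucial good term $-2\sum_b\chi_b^2$; the second bracket contributes $-2|\cv^3 v_b|^2$ plus reaction terms, yielding the good derivative term $-2\sum_b(B+|\cv v_b|^2)|\cv^3 v_b|^2$.

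Next I would estimate the three types of error terms. The cross term is controlled by Young's inequality: using $|\cv|\cv v_b|^2|\leq 2|\cv v_b|\chi_b^{1/2}$ and $|\cv\chi_b|\leq 2\chi_b^{1/2}|\cv^3 v_b|$, one obtains a bound of the form $\alpha(B+|\cv v_b|^2)|\cv^3 v_b|^2+(C|\cv v_b|^2/(\alpha B))\chi_b^2$, which, upon choosing $\alpha$ small and $B$ large (depending only on the $C_1$ from Lemma~\ref{C1 estimate}), is absorbed by the good derivative terms. The boxed reaction, after expanding $\cv^2\bigl((\mu_a+|\cv v_a|^2)/v_a\bigr)$, produces pieces of schematic types $\cv^3 v_a\cv v_a/v_a$, $(\cv^2 v_a)^2/v_a$, $\cv^2 v_a|\cv v_a|^2/v_a^2$, $\cv^2 v_a/v_a^2$, and $|\cv v_a|^2/v_a^3$; pairing each against $\cv^2 v_a$ and applying Young's inequality converts them into small multiples of $|\cv^3 v_a|^2$ and $\chi_a^2$ plus remainders of the correct scale $v_a^{-4}$, using the $C^0$ and $C^1$ bounds. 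Finally, the curvature term $4\check{\Rm}(\check\cv^2 v_a,\check\cv^2 v_a)$ is trivial in the one-dimensional case and is bounded in the two-dimensional case by $C|\check R|\chi_a\leq Cv_1^{-2}\chi_a\leq\epsilon\chi_a^2+Cv_1^{-4}$ via Lemma~\ref{lemma-checkR-upper}, while the cross-fiber terms involve only $v_c$ for $c\geq 2$, which remain bounded away from zero by Assumption~\ref{single fiber pinching} and Lemma~\ref{lemma-initial}.

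The main obstacle will be the careful bookkeeping needed to check that, after choosing $\alpha$ small and $B$ large, the net coefficients of $|\cv^3 v_b|^2$ and $\chi_b^2$ remain strictly negative, leaving only a remainder bounded by $C_*\sum_a v_a^{-4}$; scale-invariance of the inequality (under $g\mapsto\lambda^2 g$, both sides scale as $\lambda^{-4}$) ensures that no powers $v_a^{-k}$ with $k>4$ can arise. The pointwise conclusion $F_t\le\Delta F+C_*\sum_a v_a^{-4}$ then yields the second inequality by the maximum principle: at a point where $F_{\max}$ is attained, $\Delta F\le 0$, so $\frac{\mr d}{\mr dt}F_{\max}\le C_*\sum_a v_a^{-4}(x_{\max},t)\le C^*(\min_a v_a)^{-4}$. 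For the final existence claim, so long as $\min_a v_a$ remains bounded away from zero, $F_{\max}$ grows at most polynomially in $t$, yielding uniform $C^2$ bounds on each $v_a$; combined with the $C^0$ and $C^1$ bounds and the curvature bounds of Lemmas~\ref{lemma-checkR-upper}--\ref{lemma-checkR-lower}, standard Shi-type bootstrapping produces all higher-order bounds, and the solution extends past any time at which $\min_a v_a>0$.
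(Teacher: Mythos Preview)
Your strategy matches the paper's almost exactly: derive the evolution of $(B+|\cv v_a|^2)\chi_a$ by combining Lemma~\ref{alt chi evolution} with the gradient evolution from Lemma~\ref{C1 estimate}, use the good terms $-2\chi_a^2$ and $-|\cv^3 v_a|^2$ to absorb all reaction pieces via weighted Cauchy--Schwarz and Young, then choose $B$ large and the small parameter appropriately. The paper in fact fixes the concrete values $\beta=32$ and $B\ge 32\max_b|\cv v_b|^2$, but your abstract $\alpha,B$ choice is equivalent.

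There is one slip, however. You write that ``the cross-fiber terms involve only $v_c$ for $c\ge 2$, which remain bounded away from zero.'' This is false: the terms in Lemma~\ref{alt chi evolution} such as $4\sum_b n_b\mu_b v_b^{-4}\lp\cv v_a,\cv v_b\rp^2$ and $-4\sum_b n_b v_b^{-2}\lp\cv v_a,\cv v_b\rp\lp\check\cv^2 v_a,\check\cv^2 v_b\rp_{\check g}$ run over \emph{all} fibers $b$, including $b=1$ where $v_1\to 0$. They are not bounded absolutely; rather, using the $C^1$ bound from Lemma~\ref{C1 estimate}, they are controlled by $\ve\sum_b\chi_b^2 + C\sum_b v_b^{-4}$, exactly the form of remainder the theorem allows. (The paper handles, \emph{e.g.}, the second of these via $4\sum_b n_b v_b^{-2}|\cv v_b|\,|\cv v_a|\sqrt{\chi_b}\sqrt{\chi_a}\le \ve\sum_b\chi_a\chi_b + C\sum_b v_b^{-4}$.) Once you replace your ``bounded'' claim with this estimate, the argument closes as you describe. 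Note also that the bound $|\check R|\le C v_1^{-2}$ you invoke for the $\check{\Rm}$ term requires Lemma~\ref{lemma-checkR-lower} as well as Lemma~\ref{lemma-checkR-upper}; the former holds under Assumption~\ref{single fiber pinching} alone, so this is available.
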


\begin{proof}
In the estimates below, $C$ and $\ve$ denote uniform large and small constants, respectively.
We allow $C,\ve$ to change from line to line without introducing circular dependencies.

The first part of the reaction term (boxed above) is easily estimated,
\[
\mu_a\Big|\big\lp\cv^2v_a,\cv^2(v_a^{-1})\big\rp\Big|\leq \mu_a\frac{\chi_a}{v_a^2}+2\mu_a\frac{|\cv v_a|^2}{v_a^3}\sqrt\chi_a.
\]

To estimate the second part, we first expand it, obtaining
\begin{align*}
\left\lp\cv^2 v_a,  \cv^2\Big(\frac{|\cv v_a|^2}{v_a}\Big)\right\rp
	&=\frac{2}{v_a}\cv^3v_a(\cv^2 v_a, \cv v_a) + \frac{2}{v_a}\mr{tr}(\cv^2 v_a)^3-\frac{4}{v_a^2}(\cv^2 v_a)^2(\cv v_a,\cv v_a)\\
	&\quad+\frac{2}{v_a^3}\cv^2v_a(\cv v_a, \cv v_a)|\cv v_a|^2-\frac{1}{v_a^2}\chi_a|\cv v_a|^2.
\end{align*}
Applying the weighted Cauchy--Schwarz inequality ($|\alpha\beta|\leq\epsilon \alpha^2 +\frac{1}{4\epsilon} \beta^2$), we initially estimate
\[
2\left| \left\lp\cv^2 v_a,  \cv^2\Big(\frac{|\cv v_a|^2}{v_a}\Big)\right\rp \right|
	\leq |\cv^3 v_a|^2 + 2(2+4+1)\frac{|\cv v_a|^2}{v_a^2}\chi_a +4\frac{\chi_a^{3/2}}{v_a}+4\frac{|\cv v_a|^4}{v_a^3}\sqrt\chi_a.
\]
We then use Young's inequality,
\[
4\frac{\chi_a^{3/2}}{v_a}\leq \ve\chi_a^2+\frac{C}{v_a^4},
\]
and the weighted Cauchy--Schwarz inequality again,
\[
4\frac{|\cv v_a|^4}{v_a^3}\sqrt\chi_a\leq\frac{|\cv v_a|^2}{v_a^2}\chi_a+C\frac{|\cv v_a|^6}{v_a^4},
\]
which, together with the bound $|\cv v_a|^2\leq C$ from Lemma~\ref{C1 estimate}, yields the refinement
\[
2\left| \left\lp\cv^2 v_a,  \cv^2\Big(\frac{|\cv v_a|^2}{v_a}\Big)\right\rp \right|
	\leq |\cv^3 v_a|^2+15\frac{|\cv v_a|^2}{v_a^2}\chi_a+\ve\chi_a^2+\frac{C}{v_a^4}.
\]

If the base is two-dimensional, then by Lemma~\ref{C0 estimates} and Lemma~\ref{lemma-checkR-upper}, we have
\[
4\big|\check{\Rm}(\check\cv^2v_a,\check\cv^2v_a)\big|\leq\frac{C}{v_1^2}\chi_a \leq \ve\sum_b\chi_b^2+C'\sum_b v_b^{-4}.
\]
So, collecting terms and again using $|\cv v_a|^2\leq C$ and the weighted Cauchy--Schwarz inequality, we find that
\begin{align}
(\chi_a)_t	&\leq\Delta\chi_a-|\cv^3v_a|^2+\mu_a\Big(\frac{\chi_a}{2v_a^2}+\frac{|\cv v_a|^2}{v_a^3}\sqrt\chi_a\Big)+15\frac{|\cv v_a|^2}{v_a^2}\chi_a		\notag\\
		&\quad +4\sum_b n_b v_b^{-2}|\cv v_b|\,|\cv v_a|\,\sqrt\chi_b\,\sqrt\chi_a +\ve\chi_a^2+ Cv_a^{-4}
			+ C\sum_b v_b^{-4}|\cv v_b|^4		\notag\\
		&\qquad+\ve\sum_b\chi_b^2+C\sum_b v_b^{-4}	\notag\\
		&\leq\Delta\chi_a-|\cv^3 v_a|^2+\ve\chi_a^2+\ve\sum_b\chi_b^2+C\sum_b v_b^{-4}.		\label{chi_t intermediate}
\end{align}
To obtain the final inequality above, estimate~\eqref{chi_t intermediate}, we use the estimate
\[
4\sum_b n_b v_b^{-2}|\cv v_b|\,|\cv v_a|\,\sqrt\chi_b\,\sqrt\chi_a\leq \ve\sum_b\chi_b\chi_a+C\sum_b v_b^{-4}.
\]
\medskip

Next we recall from Lemma~\ref{C1 estimate} that
\[
(|\cv v_a|^2)_t=\Delta(|\cv v_a|^2)-2\chi_a-4\frac{\cv^2v_a(\cv v_a,\cv v_a)}{v_a}+2\frac{|\cv v_a|^2}{v^2_a}(\mu_a+|\cv v_a|^2).
\]
It follows from this and~\eqref{chi_t intermediate} by straightforward computation that 
\begin{align*}
(|\cv v_a|^2\chi_a)_t	&\leq \Delta(|\cv v_a|^2\chi_a)-2\lp\cv |\cv v_a|^2,\cv \chi_a\rp-|\cv v_a|^2\,|\cv^3 v_a|^2\\
	&\qquad+|\cv v_a|^2\Big(\ve\chi_a^2+\ve\sum_b\chi_b^2+C\sum_b v_b^{-4}\Big)\\
	&\quad-2\chi_a^2 + 4\frac{|\cv v_a|^2\chi_a^{3/2}}{v_a}+2\frac{|\cv v_a|^2\,\chi_a}{v_a^2}(\mu_a+|\cv v_a|^2).
\end{align*}
To improve this estimate, for $\beta>0$ to be chosen below, we use estimate
\[
2\big|\lp\cv |\cv v_a|^2,\cv \chi_a\rp\big|\leq8|\cv^3v_a|\,|\cv^2v_a|^2\,|\cv v_a|\leq\beta|\cv v_a|^2|\cv^3v_a|^2+\frac{16}{\beta}\chi_a^2,
\]
the consequence of Young's inequality, which says that
\[
4\frac{|\cv v_a|^2\chi_a^{3/2}}{v_a}\leq\ve\chi_a^2+\frac{C}{v_a^4},
\]
and the consequence of Lemma~\ref{C1 estimate} and the weighted Cauchy--Schwarz inequality that
\[
2\frac{|\cv v_a|^2\chi_a}{v_a^2}\big(\mu_a+|\cv v_a|^2\big)\leq\ve\chi_a^2+\frac{C}{v_a^4},
\]
to obtain the further refinement
\begin{align}
(|\cv v_a|^2\chi_a)_t	&\leq \Delta(|\cv v_a|^2\chi_a)+(\beta-1)|\cv v_a|^2\,|\cv^3v_a|^2+\Big(\ve+\frac{16}{\beta}-2\Big)\chi_a^2	\label{second step}\\
	&\qquad+|\cv v_a|^2\big(\ve\chi_a^2+\ve\sum_b\chi_b^2+C\sum_b v_v^{-4}\big).	\notag
\end{align}
\medskip

Finally, we combine estimates~\eqref{chi_t intermediate} and~\eqref{second step} to see that
\begin{align*}
\big[(B+|\cv v_a|^2)\chi_a\big]_t	&\leq\Delta\big[(B+|\cv v_a|^2)\chi_a\big]+\big\{(\beta-1)|\cv v_a|^2-B\big\}|\cv^3v_a|^3\\
	&\qquad+\big(\ve+\frac{16}{\beta}-2\big)\chi^2_a+\big(B+|\cv v_a|^2\big)\Big\{\ve\sum_b\chi_b^2+C\sum_b v_v^{-4}\Big\}.	
\end{align*}
We choose $\beta=32$, $B\geq\beta\max\{|\cv v_b|^2\}$, and $\ve\in(0,\frac14)$ small enough that $2B\ve\leq\frac12$.
We note that we can make $\ve$ as small as we wish by increasing $C$ if necessary. Thus it follows from these choices that the
quantity $F=\sum_a (B+|\cv v_a|^2)\chi_a$ defined above satisfies
\[
F_t\leq\Delta F-\sum_a\chi_a^2+C\sum_a v_a^{-4}
	\leq\Delta F+C\sum_a v_a^{-4}.
\]
The sum $\sum_a v_a^{-4}$ is finite as long as each $v_a>0$. 
Because there exists a first time $T>0$ such that any $v_a=0$, the conclusion follows readily.
\end{proof}

By integrating the estimate in Theorem~\ref{Hessian bound}, one immediately obtains:

\begin{corollary}		\label{Hessian Type-I}
If there exists $c>0$ such that $\min_{x\in\mc B}v_a(x,t)\geq c\sqrt{T-t}$ for all $1\leq a\leq A$, then
\[F_{\max}\leq\frac{C}{T-t}.\]
\end{corollary}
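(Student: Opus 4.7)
The plan is to integrate the ordinary differential inequality for $F_{\max}$ provided by Theorem~\ref{Hessian bound} and then combine the resulting estimate with the hypothesized Type-I lower bound on the warping functions.

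More precisely, Theorem~\ref{Hessian bound} already gives
\[
\frac{\mr d}{\mr dt} F_{\max}(t)\;\leq\; C^*\big(\min_a\min_{x\in\mc B}v_a(x,t)\big)^{-4}
\]
for as long as the solution is smooth. Substituting the assumption $\min_{x\in\mc B}v_a(x,t)\geq c\sqrt{T-t}$ (valid for each $a$) into the right-hand side immediately yields the scalar differential inequality
\[
\frac{\mr d}{\mr dt} F_{\max}(t)\;\leq\;\frac{C^*}{c^4(T-t)^2}.
\]

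Integrating this inequality from $0$ to any $t\in[0,T)$ produces
\[
F_{\max}(t)\;\leq\;F_{\max}(0)+\frac{C^*}{c^4}\Big(\frac{1}{T-t}-\frac{1}{T}\Big)
\;\leq\;F_{\max}(0)+\frac{C^*}{c^4(T-t)}.
\]
Because $T-t\leq T$ on $[0,T)$, we may write $F_{\max}(0)\leq \frac{T\,F_{\max}(0)}{T-t}$, and then absorb everything into a single constant depending only on the initial data, $c$, and $T$, giving the claimed bound $F_{\max}(t)\leq C/(T-t)$.

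There is no real obstacle here: the work has already been done in Theorem~\ref{Hessian bound}, which upgraded the PDE for $F$ into a pointwise ODE for $F_{\max}$ by means of the maximum principle. The corollary is purely a matter of integrating the resulting ODE once a Type-I lower bound on the warping functions is in place, and of observing that the additive initial contribution $F_{\max}(0)$ is majorized by a multiple of $1/(T-t)$ on the bounded interval $[0,T)$.
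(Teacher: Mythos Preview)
Your proof is correct and follows exactly the approach indicated in the paper, which simply states that the corollary follows ``by integrating the estimate in Theorem~\ref{Hessian bound}.'' The only content beyond that one-line remark is the routine absorption of $F_{\max}(0)$ into the constant, which you have carried out correctly.
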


Combining this with Lemma~\ref{C1 estimate} and the curvature formulas in Appendix~\ref{Curvature formulas}, one then obtains:

\begin{corollary}		\label{Type-I}
If there exists $c>0$ such that $\min_{x\in\mc B}v_a(x,t)\geq c\sqrt{T-t}$ for all $1\leq a\leq A$ and at least one $v_b\to0$ as $t\nearrow T<\infty$,
then the solution encounters a Type-I singularity at $T$.
\end{corollary}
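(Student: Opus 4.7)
The plan is to verify the Type-I bound $\sup_{t<T}(T-t)|\Rm(\cdot,t)| < \infty$ by invoking the curvature formulas in Appendix~\ref{Curvature formulas} and bounding each schematic component of $\Rm$ using the estimates already at hand. Since $|\Rm|^2$ decomposes into pure-base, mixed base-fiber, pure-fiber (for each $a$), and cross-fiber (for $a \ne b$) pieces, it suffices to bound each of these individually by $C(T-t)^{-1}$.

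First, I would assemble the needed inputs. The hypothesis $v_a \ge c\sqrt{T-t}$ gives $v_a^{-2} \le C(T-t)^{-1}$ for every $a$. Lemma~\ref{C1 estimate} provides a uniform bound $|\cv v_a|^2 \le C$. Corollary~\ref{Hessian Type-I}, combined with $F = \sum_b (B + |\cv v_b|^2)\chi_b \ge B\chi_a$, yields $\chi_a = |\cv^2 v_a|^2 \le C(T-t)^{-1}$. For the base curvature, the one-dimensional case is trivial since $\check{\Rm} \equiv 0$, while in the two-dimensional case Lemmas~\ref{lemma-checkR-upper} and~\ref{lemma-checkR-lower}, combined with $v_1 \ge c\sqrt{T-t}$, give $|\check R| \le C(T-t)^{-1}$, and hence $|\check{\Rm}| \le C(T-t)^{-1}$ on the surface.

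Next, I would verify the bound on each component. The pure-base component $R_{ijk\ell} = \check R_{ijk\ell}$ satisfies $|R_{ijk\ell}|^2 \le C(T-t)^{-2}$ by the preceding paragraph. The mixed base-fiber component $R_{i\alpha\beta j}$, which is schematically $v_a^{-1}\check\cv^2 v_a$ paired with a fiber metric factor, has squared norm controlled by $v_a^{-2}\chi_a \le C(T-t)^{-2}$. The pure-fiber components $R_{\alpha\beta\gamma\delta}$ for a single fiber $a$ contain $v_a^{-2}\hat R_a$ and $v_a^{-2}(1-|\cv v_a|^2)$ contracted with metric factors of size $O(v_a^2)$, and hence have squared norm at most $C(T-t)^{-2}$. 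The cross-fiber terms (for $a \ne b$) involve factors of the form $(v_a v_b)^{-1}\lp\cv v_a, \cv v_b\rp$ times metric factors on the respective fibers, and the $C^1$ estimate together with the lower bounds on $v_a$, $v_b$ again yield a squared-norm bound of $C(T-t)^{-2}$. Summing over all component types gives $(T-t)^2|\Rm|^2 \le C$, which is precisely the Type-I condition.

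The main obstacle here is essentially bookkeeping rather than analysis: one must confirm that every curvature term that appears in the multiply warped product setting --- including the cross-fiber contributions that are absent from the formulas displayed in Section~\ref{Geometric preliminaries} --- scales no worse than $(T-t)^{-1}$ in pointwise norm. Since the base-curvature lemmas dispatch the only genuinely nontrivial two-dimensional contribution and the remaining components are algebraic expressions in $v_a$, $\cv v_a$, and $\check\cv^2 v_a$ that are already controlled, no further \emph{a priori} estimates beyond those collected above are required.
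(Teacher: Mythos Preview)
Your proposal is correct and follows the same approach as the paper: decompose $|\Rm|^2$ via the formulas in Appendix~\ref{Curvature formulas} and bound each piece using the hypothesis $v_a\ge c\sqrt{T-t}$, Lemma~\ref{C1 estimate}, and Corollary~\ref{Hessian Type-I}. Your version is in fact more explicit than the paper's one-line justification, since you separately invoke Lemmas~\ref{lemma-checkR-upper} and~\ref{lemma-checkR-lower} to control the pure-base component $\check R_{ijk\ell}$ in the two-dimensional case --- an ingredient the paper leaves implicit but which is indeed needed.
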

In Sections~\ref{Generalized cylinders-1} and~\ref{Generalized cylinders-2}, we establish sufficient conditions for the Corollary to hold for
one-dimensional and two-dimensional bases, respectively.
\bigskip

We recall that Lemma~\ref{lemma-initial} applies to any solution satisfying Assumption~\ref{single fiber pinching}.

\begin{theorem}			\label{shrinker-limit}
Suppose a solution flowing from initial data satisfying Assumption~\ref{single fiber pinching} satisfies the Type-I hypotheses of
Corollary~\ref{Type-I} as $t\nearrow T<\infty$. Suppose also that the base is $\mb S^1$ or a surface $\mc B^2$ that also satisfies Assumption~\ref{eta tame}.
And suppose that $\inf_{t\nearrow T}v_a(\cdot,t)=0$ for all $1\leq a\leq A'<A$ but not for any $A'+1\leq a\leq A$.

We define rescaled metrics $\tilde{g}(\cdot,\tau) := (T-t)^{-1}\, g(\cdot,t)$, where $\tau := -\log(T-t)$.
And we define $N_{\mr{gs}}:=n+\sum_{a=1}^{A'} n_a$ and $N_{\mr{fl}}:=\sum_{a=A'+1}^A n_a$.

Then as $\tau_i\to\infty$, the solutions $\big(\mc M, \tilde{g}(\cdot,\tau_i)\big)$ converge subsequentially  smoothly and locally uniformly to
\[\
\big(\mc K^{N_{\mr{gs}}} \times \mb R^{N_{\mr{fl}}}, g_\infty\big),
\]
where $\mc K^{N_{\mr{gs}}}$ is a nonflat gradient shrinking Ricci soliton and $\mb R^{N_{\mr{fl}}}$ is flat.
\end{theorem}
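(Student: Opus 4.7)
The plan is to follow the standard strategy for extracting Type-I singularity models via parabolic rescaling, combined with the warped-product-specific analysis of the non-crushing fibers.

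\textbf{Step 1: Uniform Type-I bounds on the rescaled flow.} Consider the rescaled flow $\tilde g(\tau) = e^\tau g(T - e^{-\tau})$, which itself is a Ricci flow in $\tau$. By Corollary~\ref{Type-I} and its input (Corollary~\ref{Hessian Type-I} and Lemma~\ref{C1 estimate}), we have $|\Rm[g]| \leq C/(T-t)$ on $[0,T)$, so $|\Rm[\tilde g]|_{\tilde g} \leq C$ uniformly in $\tau$. The warping functions rescale as $\tilde v_a = e^{\tau/2} v_a$, with $|\tilde{\cv}\tilde v_a|_{\tilde g} = |\cv v_a|_g \leq C$ by Lemma~\ref{C1 estimate}, and $|\tilde{\cv}^2 \tilde v_a|_{\tilde g} \leq C$ by Corollary~\ref{Hessian Type-I}.

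\textbf{Step 2: Basepoint selection and Cheeger--Gromov extraction.} Choose basepoints $x_\tau \in \mc M$ where $\tilde v_1(\cdot,\tau)$ attains its minimum, so that $c \leq \tilde v_1(x_\tau,\tau) \leq \sqrt{2\mu_1}$ by the Type-I lower bound and Lemma~\ref{C0 estimates}. The Type-I lower bound $v_a \geq c\sqrt{T-t}$ holds for every $1 \leq a \leq A'$, giving $\tilde v_a(x_\tau,\tau) \geq c$, and the $C$-Lipschitz bound on $\tilde v_a$ in $\tilde g$ shows $\tilde v_a$ stays in a bounded interval on any fixed $\tilde g$-ball. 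Hamilton's compactness theorem then yields a subsequential smooth pointed Cheeger--Gromov--Hamilton limit $(\mc M_\infty, g_\infty(\tau), x_\infty)$.

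\textbf{Step 3: Soliton identification.} By the result of Enders--M\"uller--Topping~\cite{EMT11} (cited in the introduction), any such Type-I rescaling limit based at a singular point is a nontrivial gradient shrinking Ricci soliton; the non-triviality follows because by construction the basepoint has bounded $v_1$ with $\mu_1 / (\tilde v_1)^2$ giving non-zero fiber curvature in the limit.

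\textbf{Step 4: Euclidean splitting from the non-crushing fibers.} For $a > A'$, we have $v_a \geq \delta > 0$ on $[0,T)$, so $\tilde v_a(x_\tau, \tau) \geq \delta e^{\tau/2} \to \infty$. Combining the $C$-Lipschitz bound on $\tilde v_a$ with $\tilde v_a(x_\tau,\tau)\to\infty$, we see that on any fixed $\tilde g$-ball of radius $R$ around $x_\tau$,
\[
\tilde v_a \to \infty, \qquad \frac{|\tilde\cv \tilde v_a|_{\tilde g}}{\tilde v_a} = |\tilde\cv \tilde w_a|_{\tilde g} \to 0, \qquad |\tilde\cv^2 \tilde w_a|_{\tilde g} \to 0,
\]
uniformly. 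Thus each $\tilde w_a = \log \tilde v_a$ becomes affine with vanishing gradient in the limit, hence constant; equivalently, the warped factor degenerates to a flat metric product in the pointed limit. Concretely, the fiber $(\mb S^{n_a}, \tilde v_a^2 \hat g_a)$ has sectional curvature $\mu_a \tilde v_a^{-2} \to 0$, so after the standard small-ball identification with a Euclidean chart, it contributes a flat $\mb R^{n_a}$ factor in $g_\infty$.

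\textbf{Step 5: Direct product structure and identification of $\mc K$.} Because the warping functions $\tilde w_a$ for $a > A'$ are constant in the limit, the gauged system~\eqref{RF gauged system} decouples: the cross terms $\cv w_a \otimes \cv w_a$ drop out of \eqref{RF base alt} for $a > A'$, and the rescaled base evolution together with the surviving warping functions $\tilde v_a$ ($a \leq A'$) form a closed system of total dimension $N_{\mr{gs}} = n + \sum_{a \leq A'} n_a$. By the de Rham decomposition theorem for shrinking Ricci solitons (Euclidean factors split off isometrically), $g_\infty$ splits as a Riemannian product $g_{\mc K} + g_{\mr{eucl}}$ with $g_{\mr{eucl}}$ flat on $\mb R^{N_{\mr{fl}}}$ and $g_{\mc K}$ a nontrivial gradient shrinking soliton on an $N_{\mr{gs}}$-dimensional complete manifold of the stated warped product form.

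\textbf{Expected difficulty.} The routine steps are the compactness extraction and the invocation of~\cite{EMT11}; the technically delicate step is Step 4, where one must argue that a sequence of warped product metrics with blowing-up warping factors really converges in pointed Cheeger--Gromov sense to a metric product with a flat Euclidean factor, and that this splitting commutes with taking the soliton limit. The Lipschitz and Hessian estimates above give this, but care is needed to simultaneously handle all non-crushing fibers and to verify that the limiting soliton has the claimed warped product structure $\tilde{\mc B} \times \mb S^{n_1} \times \cdots \times \mb S^{n_{A'}}$ rather than degenerating further.
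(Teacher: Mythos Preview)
Your proposal is correct and follows essentially the same approach as the paper: invoke the Type-I curvature bound and \cite{EMT11} to extract a nonflat gradient shrinking soliton limit, then argue that the non-crushing fibers ($a>A'$) contribute flat Euclidean factors. The only organizational difference is in your Step~4: where you track $\tilde v_a\to\infty$, $|\tilde\cv\tilde w_a|\to 0$, and $|\tilde\cv^2\tilde w_a|\to 0$ and then appeal to a de Rham-type splitting, the paper instead decomposes $|\Rm|^2=\Sigma_{\mr{fl}}+\Sigma_{\mr{gs}}$ and, using the explicit curvature formulas in Appendix~\ref{Curvature formulas}, bounds each rescaled term in $\Sigma_{\mr{fl}}$ by a positive power of $T-t$ via exactly the same ingredients you cite (Lemma~\ref{lemma-initial}, Lemma~\ref{C1 estimate}, Corollary~\ref{Hessian Type-I}).
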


\begin{proof}
By the Type-I hypothesis, the rescaled metrics satisfy $|\Rm[\tilde{g}](\cdot,\tau)| \le C$ for all $\tau\in [-\log T, \infty)$.
By~\cite{EMT11}, this implies that for any sequence $\tau_i \to \infty$ and Type-I singular points $o_i$, there is smooth subsequential convergence
of the pointed solutions $(\mc M, \tilde{g}(\cdot,\tau + \tau_i),o_i)$
to a Ricci flow solution $(\mc M_{\infty}, g_{\infty}(\cdot,\tau), o_{\infty})$ that is a nonflat gradient shrinking Ricci soliton.

We claim that  the limit splits $N_{\mr{fl}}$ flat directions and hence has an $\mb R^{N_{\mr{fl}}}$ factor.
We recall that, without rescaling, the norm of the curvature is given by
\[
|\Rm|^2 = g^{I\tilde{I}} g^{J\tilde{J}} g^{K\tilde{K}} g^{L\tilde{L}} R_{IJKL} R_{\tilde{I}\tilde{J}\tilde{K}\tilde{L}},\]
where we may assume that both vectors in each index pair $(I,\tilde{I})$, $(J,\tilde{J})$, $(K,\tilde{K})$ and $(L,\tilde{L})$ are tangent to the same factor.
We write this norm as
\[
|\Rm|^2 = \Sigma_{{\mr{fl}}} + \Sigma_{{\mr{gs}}},
\]
where $\Sigma_{{\mr{fl}}}$ is the sum of all the terms with at least one index pair corresponding to a fiber $a\in\{A',\dots,A\}$, and
$\Sigma_{{\mr{gs}}}$ is the sum of all the other terms.

In order to see that the limit splits off a Euclidean factor of dimension $N_{\mr{fl}}$, it suffices to show that the rescaled sum
$\tilde\Sigma_{{\mr{fl}}} = (T-t)^2 \Sigma_{{\mr{fl}}}$ tends to zero as $\tau\to \infty$. To see this, we recall the formulas in
Appendix~\ref{Curvature formulas} and consider all the terms that contribute to $\tilde\Sigma_{{\mr{fl}}}$, that is, all terms corresponding to the
fibers $a\in\{A',\dots,A\}$. The four types of curvature considered in Appendix~\ref{Curvature formulas} are as follows:

\begin{enumerate}
\item[(a)]
We do not need to consider $\check R$, because this corresponds to a term in $\Sigma_{{\mr{gs}}}$.

\item[(b)]
Curvature terms that pair the $a^{\mr{th}}$ fiber with itself: by formula~\eqref{fiber-self}, these terms in $\tilde\Sigma_{{\mr{fl}}}^{1/2}=(T-t)\Sigma_{{\mr{fl}}}^{1/2}$
are bounded by
\[
C\,(T-t)\frac{1+|\nabla v_a|^2}{v_a^2},
\]
where $C$ is a uniform constant. Since $v_{\alpha} \ge \delta > 0$ by Lemma~\ref{lemma-initial}, and Lemma~\ref{C1 estimate} applies, these terms
converge to zero as $t\to T$, equivalently as $\tau\to\infty$.

\item[(c)]
Curvature components that pair the $a^{\rm{th}}$ fiber with any different fiber: by formula~\eqref{fiber-other}, these terms in
$\tilde\Sigma_{{\mr{fl}}}^{1/2}=(T-t)\Sigma_{{\mr{fl}}}^{1/2}$  are bounded by
\[
C\, (T-t) \frac{|\nabla v_a||\nabla v_b|}{v_{a}v_{b}}.
\]
By the Type-I assumption that $\min_{x\in\mc B} v_b(x,t) \ge c\sqrt{T-t}$ for some $c > 0$ and all $b\in\{1,\dots,A\}$, the consequence of
Lemma~\ref{lemma-initial} that $v_a \ge \delta > 0$ for all $a\in\{A'+1,A\}$, and Lemma~\ref{C1 estimate},
these terms are bounded by $C^*\sqrt{T-t}$, which tends to zero as $t\to T$.

\item[(d)]
Curvature components that pair the base with the $a^{\mr{th}}$ fiber: by~\eqref{base-fiber}, Corollary~\ref{Hessian Type-I}, and the lower bound
$v_a \ge \delta > 0$ for all $a\in\{A'+1,A\}$ again, these terms in $\tilde\Sigma_{{\mr{fl}}}^{1/2}=(T-t)\Sigma_{{\mr{fl}}}^{1/2}$  are bounded by
\[
C\, (T-t)\frac{|\nabla^2 v_{a}|}{v_{a}} \le C^*\sqrt{T-t} \to 0\quad\mbox{ as }\quad t\to T.
\]
\end{enumerate}

This completes the proof.
\end{proof}

\begin{corollary}		\label{warped-product-limit}
The soliton $\mc K^{N_{\mr{gs}}}$ constructed in Theorem~\ref{shrinker-limit} is a warped product. 
\end{corollary}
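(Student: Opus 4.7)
The plan is to pass the multiply warped product structure through the parabolic rescaling and smooth Cheeger--Hamilton limit already established in Theorem~\ref{shrinker-limit}. The rescaled metrics take the form
\[
\tilde g(\tau) \;=\; \tilde{\check g}(\tau) \,+\, \sum_{a=1}^{A} \tilde v_a^{\,2}(\tau)\,\hat g_a,
\]
with $\tilde{\check g} = (T-t)^{-1}\check g$, $\tilde v_a = (T-t)^{-1/2}v_a$, and the fiber metrics $\hat g_a$ unchanged. Hence each $\tilde g(\tau)$ remains a multiply warped product with the same fixed fibers, and the job of the corollary is simply to identify the warped structure within the $\mc K$-factor of the splitting.

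First, I would collect uniform $C^2$ bounds on the collapsing warping functions. For $a \in \{1,\dots,A'\}$, the Type-I hypothesis gives $\tilde v_a \geq c > 0$, the rescaled version of Lemma~\ref{C0 estimates} gives $\tilde v_a \leq C$ on bounded parabolic regions, Lemma~\ref{C1 estimate} is scale invariant, and Corollary~\ref{Hessian Type-I} supplies $|\cv^2 \tilde v_a|_{\tilde g} \leq C$ on any such region. These bounds, together with the smooth ambient convergence $\tilde g(\tau_i) \to g_\infty$ from Theorem~\ref{shrinker-limit}, allow me to extract (after a further subsequence) smooth limits $\tilde v_{a,\infty} : \tilde{\mc B} \to (0,\infty)$ on compact subsets of the limit base $\tilde{\mc B}$, where $\tilde{\mc B}$ is recovered as the image of the base factor of $(\mc M,\tilde g(\tau_i))$ under the convergence. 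Likewise, $\tilde{\check g}(\tau_i)$ converges smoothly to a limit $\check g_\infty$ on $\tilde{\mc B}$. Preservation of the horizontal/vertical distributions along the rescaling (the fiber directions are unaffected and the submersion structure is rigid) forces the limit metric on $\mc K$ to split in the same form,
\[
g_{\mc K} \;=\; \check g_\infty + \sum_{a=1}^{A'} \tilde v_{a,\infty}^{\,2}\,\hat g_a,
\]
which exhibits $\mc K^{N_{\rm gs}}$ as a multiply warped product of $\tilde{\mc B}$ with $\mc F_1 \times \cdots \times \mc F_{A'}$.

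Second, I would verify consistency on the non-collapsing side $a \in \{A'+1,\dots,A\}$. For these indices $\tilde v_a \to \infty$ uniformly on compacts, while the scale-invariant bound from Lemma~\ref{C1 estimate} forces the relative variation $\tilde v_a / \tilde v_a(o_i) \to 1$ on any fixed compact region of $\tilde{\mc B}$. Combined with the vanishing of the curvature contributions in items (b)--(d) of the proof of Theorem~\ref{shrinker-limit}, this shows that each such fiber decouples asymptotically and contributes a flat Euclidean factor, reproducing the $\mb R^{N_{\rm fl}}$ split and confirming that no hidden warping survives between $\mc K$ and the Euclidean factor.

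The main obstacle, and in fact the only substantive point, is to argue that smooth convergence of the full tensors $\tilde g(\tau_i)$ implies separate smooth convergence of the metric components $\tilde{\check g}$ and each $\tilde v_a$, rather than merely of the sum. This reduces to the observation that the horizontal distribution $T\mc B$ and the vertical distributions $T\mc F_a$ are canonically determined by the Riemannian submersion structure of the warped product at every time and are unchanged by the rescaling, so the decomposition can be read off the limit tensor. Once that identification is made, the warped-product form is automatic.
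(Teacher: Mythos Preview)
Your proposal is correct and follows essentially the same strategy as the paper: both arguments observe that each rescaled metric $\tilde g(\tau)$ retains the multiply warped product (Riemannian submersion) structure with the \emph{same} time-independent horizontal/vertical splitting, and that this structure survives the smooth Cheeger--Gromov limit established in Theorem~\ref{shrinker-limit}. The paper phrases the key point in O'Neill's language---the horizontal distribution is integrable (O'Neill's $A$-tensor vanishes) and the vertical metric depends only on the base point, and these intrinsic properties pass to the limit---whereas you work more concretely, extracting separate limits of $\tilde{\check g}$ and the $\tilde v_a$ via the $C^2$ bounds from Lemma~\ref{C1 estimate} and Corollary~\ref{Hessian Type-I}; your final paragraph identifying the ``main obstacle'' is exactly the content of the paper's last two sentences.
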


\begin{proof}
For each $\tau$, the rescaled manifold $\big(\mc M^N,\tilde g(\tau)\big)$ is a Riemannian submersion.
Following~\cite{ONB66}, let $\pi_x:T_x\mc M^N \to T_b \mc B^n$ denote the
tangent projection with kernel $\mc V_x$ and orthogonal complenent $\mc H_x$. 

We observe that the size  $\big|\mc F_a\big|_{\tilde g(\tau)}$ of each fiber is uniformly bounded from below by a positive quantity,
the horizontal distribution $\mc H$ is integrable (equivalently, O'Neill's tensor $A$, defined in \S2 of~\cite{ONB66}, vanishes), and the metrics are constant
on the vertical distribution $\mc V$ in the precise sense that $\tilde g(\tau)\big|_{\mc V_x}$ depends only on $b=\pi(x)$.

The projection $\pi_x:T_x\mc M^N \to T_b \mc B^n$ is independent of $\tau$. Thus, these properties persist as $\tau_i\to\infty$ in
any smooth subsequential Cheeger--Gromov limit, which must therefore be a warped product globally.
\end{proof}

\begin{corollary}
Suppose the base surface $\mc B^2$ has genus $\geq1$ or is a large 2-sphere satisfying Assumption~\ref{eta tame} for a sufficiently small $\eta$
depending only on the diameter of the smallest fiber.\ Then the soliton $\mc K^{N_{\mr{gs}}}$ constructed in Theorem~\ref{shrinker-limit} is noncompact.
\end{corollary}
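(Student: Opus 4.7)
The approach is to show that the area of the base, $A(t) := \operatorname{Area}(\mc B^2,\check g(t))$, stays bounded below by a positive constant for all $t \in [0,T)$. Once this is established, the rescaled area $(T-t)^{-1}A(t)$ tends to infinity, which forces the Cheeger--Gromov limit base $\tilde{\mc B}^2$---and hence $\mc K^{N_{\mr{gs}}}$, which is a warped product over $\tilde{\mc B}^2$---to be noncompact.

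First I would derive the evolution of $A$. Tracing equation \eqref{RF base} with $\check g^{-1}$ and integrating by parts over the closed surface $\mc B^2$, one finds
\[
 \frac{\mr d A}{\mr d t} \;=\; -\int_{\mc B^2}\check R\,\mr d\check A + \sum_a n_a\int_{\mc B^2}\frac{|\cv v_a|^2}{v_a^2}\,\mr d\check A \;=\; -4\pi\chi(\mc B^2) + (\mr{nonneg.}),
\]
where the last equality uses Gauss--Bonnet. If $\mc B^2$ has genus $\geq 1$, then $\chi\leq 0$, so $A(t)$ is nondecreasing and $A(t)\geq A(0)>0$ for all $t\in[0,T)$. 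If $\mc B^2=\mb S^2$, then $\chi=2$ yields only the crude bound $\mr d A/\mr d t\geq -8\pi$, so $A(t)\geq A(0)-8\pi T$; but Assumption~\ref{eta tame} gives $\check R(0)\leq f(0)\leq \eta$ pointwise, whence Gauss--Bonnet on $\mb S^2$ forces $A(0)\geq 8\pi/\eta$, while Lemma~\ref{C0 estimates} caps $T\leq (v_1)_{\max}^2(0)/(2\mu_1)$. Choosing $\eta$ small enough that $\eta<\mu_1/(v_1)_{\max}^2(0)$ gives $A(t)\geq 4\pi/\eta > 0$ uniformly---this is the quantitative sense of ``$\eta$ depending only on the diameter of the smallest fiber.''

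Finally I would upgrade this to the desired geometric statement. By Theorem~\ref{shrinker-limit} and Corollary~\ref{warped-product-limit}, the rescaled solutions converge in the pointed Cheeger--Gromov sense to $\mc K^{N_{\mr{gs}}}\times\mb R^{N_{\mr{fl}}}$, with $\mc K^{N_{\mr{gs}}}$ a warped product over some surface $\tilde{\mc B}^2$. The main obstacle lies here: one must argue that the rescaled bases $\bigl(\mc B^2,(T-t_i)^{-1}\check g(t_i)\bigr)$ themselves converge in a pointed sense to $\tilde{\mc B}^2$, so that the area of $\tilde{\mc B}^2$ would---if $\tilde{\mc B}^2$ were compact---be the limit of the rescaled base areas. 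This is exactly what the proof of Corollary~\ref{warped-product-limit} provides: since O'Neill's $A$-tensor vanishes and the horizontal distribution of the Riemannian submersion is integrable and independent of $\tau$, the warped-product structure (and hence the base factor) descends to the limit under smooth Cheeger--Gromov convergence. Comparing with the divergence $(T-t_i)^{-1}A(t_i)\to\infty$ from the previous paragraph gives the required contradiction when $\tilde{\mc B}^2$ is compact, completing the proof.
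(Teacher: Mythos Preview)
Your argument is correct and follows essentially the same route as the paper: compute the evolution of $\operatorname{Area}(\mc B^2,\check g)$ via Gauss--Bonnet to get $\frac{\mr d}{\mr dt}\check A = -4\pi\chi(\mc B^2)+\text{(nonneg.)}$, conclude a uniform positive lower bound for $\check A(t)$ in both the genus $\geq 1$ and large-sphere cases, and observe that parabolic rescaling then forces the limit base to have infinite area. Your treatment is in fact more explicit than the paper's---you derive the formula from the ungauged system~\eqref{RF base} via integration by parts (the paper uses the gauged form~\eqref{RF base alt} directly), you quantify the $\eta$-threshold in the sphere case, and you spell out why the base area passes to the Cheeger--Gromov limit through the warped-product structure of Corollary~\ref{warped-product-limit}---but the underlying idea is the same.
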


\begin{proof}
Let $\check A$ and $\mr d\check A$ denote the area and measure, respectively of $\mc B^2$ with respect to $\check g$. Then, up to diffeomorphism,
it follows from~\eqref{RF base alt} by a standard variational formula that
\[
\partial_t(\mr d\check A) =\big(-\check R+n|\cv w|^2\big)\,\mr d\check A\quad\Rightarrow\quad
\frac{\mr d}{\mr dt}\check A = -4\pi\chi(\mc B^2)+n||\cv w||^2.
\]
Hence, $\check A$ monotonically increases if the genus of $\mc B^2$ is at least 1, and decreases at most linearly if $\mc B^2$ is diffeomorphic to $\mb S^2$.
In the latter case, we may assume that the area is sufficiently large initially, which is consistent with Assumption~\ref{eta tame}.
Thus in either case, after parabolic dilation, the limit base surface has infinite area.
\end{proof}

\section{Multiply warped products over $\mb S^1$}		\label{Generalized cylinders-1}

Throughout this section, we assume a one-dimensional closed base $\mc B^1=\mb S^1$ and we prove Main Theorem~\ref{Main-1}.
Specifically, we consider multiply warped product metrics
on $\mb S^1\times\mb S^{n_1}\times\cdots\times\mb S^{n_A}$ of the form
\[
g = (\mr ds)^2 + \textstyle\sum_{a=1}^A g_a  =  (\mr ds)^2 + \textstyle\sum_{a=1}^A v_a^2\,\hat g_a,
\]
where all $n_a\geq2$ and $s$ denotes the arclength from a fixed but arbitrary point $\theta_0$ on $\mb S^1$.

Using Appendix~\ref{Curvature formulas}, we observe that
\begin{align*}
R_{00}	&= -\left(\sum_a  n_a\frac{(v_a)_{ss}}{v_a}\right)\mr ds^2,\\
(\Rc_a)_{\alpha\beta}	&=
	\left(-\frac{(v_a)_{ss}}{v_a}+(n_a-1)\frac{1-(v_a)_s^2}{v_a^2}-\sum_{b\neq a}n_b\frac{(v_a)_s(v_b)_s}{v_a v_b}\right\}(g_a)_{\alpha\beta}.
\end{align*}
In particular, the scalar curvature of the whole manifold is
\begin{equation}		\label{1-dim scalar}
R=-2\sum_a n_a\frac{(v_a)_{ss}}{v_a}
	+\sum_a n_a(n_a-1)\frac{1-(v_a)_s^2}{v_a^2}
	-\sum_a\sum_{b\neq a} n_a n_b\frac{(v_a)_s(v_b)_s}{v_a v_b}.
\end{equation}

\subsection{The arclength commutator} Using $s$ as a gauge induces a commutator $[\partial_s,\partial_t]$.
Specifically, we may suppose there is locally a smooth function $\phi$ such that
\begin{equation}	\label{density}
s(\theta,t)=\int_{\theta_0}^\theta\phi(\tilde\theta,t)\,\mr d\tilde\theta,
\end{equation}
which implies that
\[
\partial_s=\phi^{-1}\partial_\theta \qquad\mbox{ and }\qquad \mr ds=\phi\,\mr d\theta.
\]
Then $2\phi\phi_t (\mr d\theta)^2= -2 R_{00}\,(\phi\,\mr d\theta)^2$, which is equivalent to
\[
\frac{\phi_t}{\phi}=-R_{00}.
\]
This yields the commutator
\begin{equation}
\big[\partial_t,\,\partial_s\big]
	=\big[\partial_t,\,\phi^{-1}\partial\theta\big]
	=-\frac{\phi_t}{\phi^2}\,\partial_\theta=-\frac{\phi_t}{\phi}\,\partial_s
	=R_{00}\,\partial_s
	= - \sum_a  n_a\frac{(v_a)_{ss}}{v_a}\,\partial_s.	\label{Commutator}
\end{equation}

\subsection{Type-I singularity formation}

We begin by constructing an open set of initial data that can form singularities modeled on generalized cylinders.

\begin{assumpA}		\label{guarantee cylinder}
Each Einstein constant is at least as large as that of the standard sphere, $\mu_a\geq n_a-1$;
 and there are constants $r_0\in\mb R$ and $c_0>0$ sufficiently small such that
the scalar curvature of the whole manifold satisfies $\min R(\cdot,0) \ge r_0$ and the size of the first
fiber satisfies $v_1(\cdot,0)\leq c_1$.
\end{assumpA}

We assume in the remainder of this section that the initial data satisfy both Assumptions~\ref{single fiber pinching} and~\ref{guarantee cylinder}.

\begin{lemma}		\label{lemma-TypeI}
We consider a Ricci flow solution over $\mb S^1$ that evolves from initial data satisfying both Assumptions~\ref{single fiber pinching} and~\ref{guarantee cylinder}.
 Then there exists a uniform constant $c$ so that
\[\min v_a(\cdot,t) \ge c\sqrt{T-t}, \qquad t\in [0,T),\]
for all fibers with $1 \le a \le A$.
\end{lemma}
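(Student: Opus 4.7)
The plan is to split the fibers into the uncrushed ones and the crushing first fiber. For $a \ge 2$, Lemma~\ref{lemma-initial} applied under Assumption~\ref{single fiber pinching} immediately gives $v_a(\cdot,t) \ge \delta > 0$ uniformly on $[0,T)$, so since $T < \infty$ one trivially has $v_a \ge (\delta/\sqrt{T})\,\sqrt{T-t}$. All the real work is in controlling $v_1$ from below.

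For $v_1$, my central idea is to evaluate the scalar-curvature identity \eqref{1-dim scalar} at a spatial minimum $s^*(t)$ of $v_1(\cdot,t)$ on $\mb S^1$. There the sign conditions $(v_1)_s(s^*)=0$ and $(v_1)_{ss}(s^*)\ge 0$ rearrange \eqref{1-dim scalar} into the algebraic identity
\[
v_1(v_1)_{ss}(s^*,t) \;=\; \frac{n_1-1}{2} \;+\; \frac{v_1^2}{2n_1}\bigl[\mathcal{E}(s^*,t) - R(s^*,t)\bigr],
\]
where $\mathcal{E}$ collects the terms in \eqref{1-dim scalar} coming from the fibers $a \ge 2$. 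Substituting into the spatial-minimum envelope identity $\tfrac{d}{dt}(v_1^2)_{\min}(t) = 2v_1(v_1)_{ss}(s^*,t) - 2\mu_1$ (valid at smooth minima and handled by a standard support-function argument more generally), and invoking $\mu_1 \ge n_1-1$ from Assumption~\ref{guarantee cylinder}, one obtains
\[
\frac{d}{dt}(v_1^2)_{\min}(t) \;\le\; -(n_1-1) \;+\; \frac{v_1^2}{n_1}\bigl[\mathcal{E}(s^*,t) - R(s^*,t)\bigr].
\]
If the bracketed error can be shown to be appropriately small as $t\nearrow T$, then $\tfrac{d}{dt}(v_1^2)_{\min} \le -(n_1-1)/2$ near $T$, and integration backward from $T$ (where $(v_1^2)_{\min}(T) = 0$) yields $(v_1^2)_{\min}(t) \ge (n_1-1)(T-t)/2$ on a left neighborhood of $T$; the complementary compact subinterval of $[0,T)$ is trivial by positivity of $v_1$ and finiteness of $T$.

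Controlling the error uses two inputs. First, Hamilton's maximum principle for scalar curvature applied to $\partial_t R \ge \Delta R + \tfrac{2}{N}R^2$, starting from $R(\cdot,0)\ge r_0$ in Assumption~\ref{guarantee cylinder}, gives a uniform lower bound $R(\cdot,t)\ge r_*$ on $[0,T)$; combined with $(v_1)_{ss}(s^*)\ge 0$, this upgrades to the upper bound $v_1^2 R(s^*) \le n_1(n_1-1) + v_1^2\mathcal{E}(s^*)$. Second, I would control $|(v_a)_{ss}|$ for $a\ge 2$ by a continuity bootstrap. Set $T_\ast := \sup\{t\in[0,T) : (v_1^2)_{\min}(s) \ge c_{\rm boot}(T-s)\ \forall\, s\le t\}$ for a suitable $c_{\rm boot}>0$ compatible with the initial data, and suppose $T_\ast < T$. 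Then on $[0,T_\ast]$ one has $\min_a v_a \ge c'\sqrt{T-s}$ for all fibers, so rerunning the integration in Theorem~\ref{Hessian bound} on this restricted interval gives $F_{\max}(t) \le C/(T-t)$ on $[0,T_\ast]$, and hence $|(v_a)_{ss}| \le C/\sqrt{T-t}$ there. Combined with the upper bound $v_1^2 \le 2\mu_1(T-t)$ from Lemma~\ref{C0 estimates}, this yields $|v_1^2\mathcal{E}(s^*)| = O(\sqrt{T-t})$; continuity of all relevant quantities extends the rate bound $\tfrac{d}{dt}(v_1^2)_{\min} \le -(n_1-1)/2$ slightly past $T_\ast$, and the resulting lower bound on $(v_1^2)_{\min}$ contradicts the definition of $T_\ast$.

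The hardest step will be the Hessian control: the cross-fiber coupling in Lemma~\ref{alt chi evolution} carries an unavoidable factor $v_1^{-2}$ that a priori threatens to blow up near $T$, and Corollary~\ref{Hessian Type-I} is not directly applicable because it presupposes the Type-I condition on all of $[0,T)$. The continuity bootstrap circumvents this by confining the Hessian estimate to the restricted interval $[0,T_\ast]$, where the stand-in Type-I bound $(v_1^2)_{\min} \ge c_{\rm boot}(T-s)$ furnishes the needed decay. The smallness of the initial pinching parameter $c_0$ in Assumption~\ref{guarantee cylinder} is used at the end to ensure that the bounded perturbation from $r_*$ (and any other bounded residual contributions to $\mathcal{E}$) does not exceed the leading $(n_1-1)$ rate.
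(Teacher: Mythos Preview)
Your continuity bootstrap does not close, and this is a genuine gap rather than a detail. The inequality you derive, $\tfrac{d}{dt}(v_1^2)_{\min}\le -(n_1-1)/2$, is an \emph{upper} bound on the derivative. Such a bound yields a \emph{lower} bound on $(v_1^2)_{\min}$ only by integrating \emph{backward from $T$}, which requires the inequality on all of $[t,T)$. Your bootstrap, however, only produces it on $[0,T_\ast]$. To extend past $T_\ast$ you would need to show that $(v_1^2)_{\min}$ stays \emph{above} the barrier $c_{\mathrm{boot}}(T-t)$, i.e.\ that $\tfrac{d}{dt}(v_1^2)_{\min}(T_\ast)>-c_{\mathrm{boot}}$; but your inequality says $(v_1^2)_{\min}$ drops \emph{at least} as fast as $(n_1-1)/2$, which is entirely compatible with it crossing below the barrier at $T_\ast$. ``Continuity extends the rate bound slightly past $T_\ast$'' is true but useless: having $f'\le -(n_1-1)/2$ on $[0,T_\ast+\varepsilon]$ still gives no lower bound on $f(T_\ast+\varepsilon)$ without knowing $f$ at a later time. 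The circularity is real: the Hessian bound on $(v_a)_{ss}$ for $a\ge 2$ comes from integrating $\tfrac{d}{dt}F_{\max}\le C^\ast v_{\min}^{-4}$, whose right-hand side is dominated by $v_1^{-4}$, so you cannot decouple the Hessian control from the very lower bound on $v_1$ you are trying to prove.

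The paper avoids this entirely by tracking a different quantity. Instead of $(v_1^2)_{\min}$, it studies $Q:=2\sum_a n_a\log v_a$ (equivalently $P:=\prod_a v_a^{n_a}$). Writing out $Q_t$ from~\eqref{RF fibers} and substituting the scalar-curvature identity~\eqref{1-dim scalar} absorbs \emph{all} second derivatives $(v_a)_{ss}$ into $R\ge r_0$; what remains are only first-derivative cross terms. At a spatial minimum of $Q$ the condition $\sum_a n_a(v_a)_s/v_a=0$ forces $\sum_{a\ne b}n_an_b(v_a)_s(v_b)_s/(v_av_b)=-\sum_a n_a^2(v_a)_s^2/v_a^2$, so these cross terms acquire a good sign. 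One gets the clean inequality $\tfrac{d}{dt}Q_{\min}\le -r_0-\sum_a n_a\mu_a v_a^{-2}$ with no Hessian input whatsoever. Passing to $P=e^{Q/2}$ and using $\mu_a\ge n_a-1$ together with the smallness of $v_1$ from Assumption~\ref{guarantee cylinder} gives $\tfrac{d}{dt}P_{\min}\le -c_3 P_{\min}^{1-2/n_1}$, which integrates backward from $T$ to yield $P_{\min}^{2/n_1}\gtrsim T-t$, hence $v_1^2\gtrsim T-t$. The point is that working at the minimum of the \emph{product} $P$, rather than of $v_1$ alone, makes the cross-fiber coupling disappear algebraically and eliminates the need for any bootstrap.
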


\begin{proof}
Since $R_t=\Delta R+2|\Rc|^2\geq\Delta R$, we have $R\geq r_0$ for as long as the solution remains smooth.

We define $Q:=\log\big(v_1^{2n_1}\,v_s^{2n_2}\cdots v_A^{2n_A}\big)$. Recalling the evolution equation~\eqref{RF fibers} for $v_a$
and using~\eqref{Laplacian} in the form
$\Delta v_a = (v_a)_{ss}+\sum_b n_b\frac{(v_a)_s(v_b)_s}{v_b}$, we then expand $Q_t=\sum_a2n_a\frac{(v_a)_t}{v_a}$, yielding
\begin{equation}		\label{evolve Q}
Q_t=2\sum_a n_a\Big\{\frac{(v_a)_{ss}}{v_a}+\sum_b n_b\frac{(v_a)_s(v_b)_s}{v_av_b}-\frac{\mu_a+(v_a)_s^2}{v_a^2}\Big\}.
\end{equation}
By~\eqref{1-dim scalar}, \eqref{evolve Q}, and the fact that $R\geq r_0$ is preserved, we obtain the inequality
\begin{equation}		\label{eq-Q1}
Q_t\leq -r_0+\sum_a n_a\frac{n_a-1-2\mu_a}{v_a^2}+\sum_a n_a(n_a-1)\frac{(v_a)_s^2}{v_a^2}+\sum_{b\neq a}n_an_b\frac{(v_a)_s(v_b)_s}{v_av_b}.
\end{equation}

We observe that at any point in space where $Q_{\min}(t)$ is attained, one has
\[
\sum_a n_a\frac{(v_a)_s}{v_a}=0,
\]
which implies that at any such point,
\[
\sum_{b\neq a}n_an_b\frac{(v_a)_s(v_b)_s}{v_av_b}=\sum_a \left(n_a\frac{(v_a)_s}{v_a}\Big(\sum_{b\neq a}n_b\frac{(n_b)_s}{n_b}\Big)\right)
=-\sum_a n_a^2 \frac{(v_a)_s^2}{v_a^2}.
\]
Combining this with~\eqref{eq-Q1} yields
\[
\frac{\mr d}{\mr dt}Q_{\min}\leq-r_0+\sum_a n_a\frac{n_a-1-2\mu_a}{v_a^2}-\sum_a n_a\frac{(v_a)_s^2}{v_a^2},
\]
which by our assumption on $\mu_a$ implies that
\begin{equation}	\label{eq-Q2}
\frac{\mr d}{\mr dt}Q_{\min}\leq-r_0-\sum_a \frac{n_a\mu_a}{v_a^2}.
\end{equation}

We define $P:=v_1^{n_1}\,v_2^{n_2}\cdots v_A^{n_A}$, so that $Q=2\log(P)$. Then estimate~\eqref{eq-Q2} implies that at any spatial minimum of $P$, one has
\[
P_t	=\frac12 PQ_t	\leq\frac12 P\Big(-r_0-\sum_a \frac{n_a\mu_a}{v_a^2}\Big)
	\leq\frac12 v_1^{n_1-2}v_2^{n_2}\cdots v_a^{n_A}\big(-r_0 v_1^2-n_1\mu_1\big).
\]
A consequence of Lemma~\ref{C0 estimates} is $v_1(\cdot,t) \le \max v_1(\cdot, 0) =c_1$.
Thus for $c_1$ sufficiently small, depending on $r_0$ and $n_1\mu_1$, there exists $c_2>0$ such that $\frac12(r_0 v_1^2+n_1\mu_1\big)\geq c_2$.
Furthermore, by our assumed lower bounds for $v_2,\dots,v_A$ and the upper bounds given by Lemma~\ref{C0 estimates}, the product $P$ is comparable to $v_1^{n_1}$.
Thus there exists $c_3>0$ such that
\[
\frac{\mr d}{\mr dt}P_{\min}\leq-c_2\,v_1^{n_1-2}v_2^{n_2}\cdots v_a^{n_A}\leq -c_3\,P^{1-\frac{2}{n_1}}.
\]
Integrating this from $t$ to $T$ and using $P_{\min}(T)=0$ and the fact that $P^{\frac{1}{n_1}}$ and $v_1$ are comparable, we obtain $c_4,c_5>0$ such that
\[
P(t)^{\frac{2}{n_1}}\geq c_4(T-t)\qquad\Rightarrow\qquad v_1^2(\cdot,t)\geq c_5(T-t).
\]
This completes the proof.
\end{proof}

Thus we have the following estimate for $v_{\min}:=\min\{v_a:1\le a\le A\}$:

\begin{corollary}		\label{cor-vmin}
Assume a solution over $\mb S^1$ originates from initial data satisfying both Assumptions~\ref{single fiber pinching} and~\ref{guarantee cylinder}.
Then there exist uniform constants $0<c<C<\infty$ such that $c\,(T-t) \le v_{\min}^2(t) \le (T - t)$ for all $t\in [0,T)$.
\end{corollary}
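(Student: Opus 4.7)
The plan is to combine the pointwise bounds already established for individual fibers: the elementary upper bound from Lemma~\ref{C0 estimates} on the one hand, and the lower bound from Lemma~\ref{lemma-TypeI} on the other, using Lemma~\ref{lemma-initial} to identify which fiber drives $v_{\min}$.

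First I would observe that, by Lemma~\ref{lemma-initial}, Assumption~\ref{single fiber pinching} forces $(v_a)_{\min}(t) \ge \delta > 0$ for all $a \ge 2$ and all $t \in [0,T)$, while $(v_1)_{\min}(t) \to 0$ as $t \nearrow T$. In particular, for $t$ sufficiently close to $T$ the quantity $v_{\min}(t)$ is realized on the first fiber $\mc F_1$, and Lemma~\ref{C0 estimates} then supplies $v_{\min}^2(t) \le (v_1)_{\min}^2(t) \le 2\mu_1(T-t)$. This gives the upper bound (after absorbing $2\mu_1$ into $C$, and enlarging $C$ if necessary to cover the earlier times $t\in[0,T/2]$, say, where the bound is vacuous).

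For the lower bound, Lemma~\ref{lemma-TypeI} already furnishes, for each $1 \le a \le A$, a uniform constant $c_a > 0$ such that $\min_x v_a(x,t) \ge c_a\sqrt{T-t}$. Setting $c := \min_{1 \le a \le A} c_a^2 > 0$ and taking the minimum over the finite index set $a$ preserves the inequality, yielding $v_{\min}^2(t) \ge c(T-t)$ on $[0,T)$. Since both directions reduce immediately to estimates proved earlier in this section, no essential obstacle remains; the corollary is really a bookkeeping step packaging the Type-I control on $v_{\min}$ needed to invoke Corollary~\ref{Type-I} and Theorem~\ref{shrinker-limit} in the singularity analysis that follows.
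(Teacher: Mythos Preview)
Your proof is correct and follows essentially the same approach as the paper: the lower bound comes directly from Lemma~\ref{lemma-TypeI}, and the upper bound from Lemma~\ref{C0 estimates}. One minor simplification: your case-splitting on the upper bound is unnecessary, since $v_{\min}(t)\le (v_1)_{\min}(t)$ holds trivially for \emph{all} $t\in[0,T)$ (it is a minimum over a larger set), and Lemma~\ref{C0 estimates} already gives $(v_1)_{\min}^2(t)\le 2\mu_1(T-t)$ on the entire interval, so no separate treatment of early times is needed.
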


\begin{proof}
Consequences of Lemma~\ref{lemma-TypeI} are that $\liminf_{t\to T} \big(v_{\min}(t)\big) = 0$ and $v_{\min}(t) \ge c\,(T-t)$ for all $t\in [0,T)$, where $c$ is a uniform positive constant.
The upper bound follows immediately from Lemma~\ref{C0 estimates}.
\end{proof}

\begin{remark}		\label{Almost cylinder}
Lemma~\ref{lemma-TypeI} and Corollary~\ref{cor-vmin} now allow us to invoke Corollary~\ref{Type-I}, Theorem~\ref{shrinker-limit}, and
Corollary~\ref{warped-product-limit} from Section~\ref{Elementary estimates}, yielding a limit
\[
\big(\mc K^{N_{\mr{gs}}} \times \mb R^{N_{\mr{fl}}}, g_\infty\big),
\]
where $\mc K^{N_{\mr{gs}}}$ is a nonflat gradient shrinking soliton of dimension $N_{\mr{gs}}=1+n_1$ with a warped product structure.
\end{remark}
We next show that $\mc K^{N_{\mr{gs}}}$ is a cylinder by analyzing $(v_1)_{ss}$.
\medskip

We recall that $v_1$ controls the size of the fiber that crushes. To get better control on its second derivative, we now consider
\[
L := v_1 (v_1)_{ss}\, \log v_1.
\]
Motivation to consider this quantity comes from a paper of the second author~\cite{AK04}.
In the following theorem, we show that $L$ is bounded from below in a suitable spacetime neighborhood of a local singularity.
\medskip

We define that neighborhood as follows: for fixed $0 < \delta \ll 1$, Corollary~\ref{cor-vmin} implies that there exists $t_{\delta}\in [0,T)$
such that the radius of each neck (each local minimum of $v_1$) that becomes singular satisfies $v_1 \le \delta$ for all $t_{\delta} \le t < T$.
By~\eqref{RF fibers}, the Sturmian theorem~\cite{Sturmian}  applies to each $v_a$ and implies that critical points are nondegenerate, except
possibly where two critical points merge. Therefore, we may assume that $(v_1)_{ss}>0$ at each local minimum of $v_1$. It follows that
\[
\Omega := \left\{s\in\mb S^1\colon (v_1)_{ss} \, \log\left(\frac{v_1}{\delta}\right) < 0\right\}
\]
is the union of an open interval around each neck for all $t\in (t_\delta,T)$.

\begin{assumpA}		\label{small gradient}
$\mu_1 = n_1 -1$ and $(v_1)_s^2(\cdot,0) \le 1$.
\end{assumpA}

By Lemma~\ref{C1 estimate}, it follows from this assumption that $(v_1)_s^2(s,t) \le 1$ for as long as a smooth solution exists.

\begin{theorem}		\label{Neck}
Under Assumptions~\ref{single fiber pinching},~\ref{guarantee cylinder}, and~\ref{small gradient}, there exists a uniform positive
constant $C$ such that for as long as the flow exists,
\[
L \ge -C\quad\mbox{ in }\quad\Omega.
\]
\end{theorem}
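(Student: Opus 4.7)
The plan is to apply the parabolic maximum principle to $L$ on the spacetime region $\Omega \times (t_\delta, T)$, adapting the barrier argument of~\cite{AK04} to the multiply warped setting. First I would derive an evolution inequality for $L$. Starting from~\eqref{RF fibers} in the arclength gauge and the commutator~\eqref{Commutator} (applied twice to obtain an equation for $(v_1)_{ss}$, which brings in $\sum_a n_a (v_a)_{ss}/v_a$ at each interchange of $\partial_t$ with $\partial_s$), and distributing $\partial_t-\partial_{ss}$ across the product $v_1 (v_1)_{ss} \log v_1$ via the product rule, one should obtain a schematic inequality
\[
(\partial_t - \partial_{ss}) L \;\geq\; 2\frac{(v_1)_s}{v_1}\, \partial_s L \;+\; \mc R\big(v_1,(v_1)_s,(v_1)_{ss}\big)\, \log v_1 \;-\; \mc E,
\]
where $\mc R \geq 0$ in $\Omega$ (using $\mu_1 = n_1-1$ from Assumption~\ref{small gradient} together with $(v_1)_s^2 \leq 1$ from Lemma~\ref{C1 estimate}) and $\mc E$ collects all cross-fiber contributions.

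Second, I would handle the parabolic boundary of $\Omega \times (t_\delta, T)$. On the spatial boundary, either $(v_1)_{ss} = 0$, yielding $L = 0$, or $v_1 = \delta$, in which case Corollary~\ref{Hessian Type-I} combined with the fixed lower bound $v_1 = \delta$ gives $|(v_1)_{ss}| \leq C_\delta$ and hence $|L| \leq C_\delta |\log \delta|$. At the initial time $t = t_\delta$, Theorem~\ref{Hessian bound} supplies a uniform a priori bound on $L$. Consequently, if $L$ ever dropped below $-C$ for $C$ sufficiently large, it would have to do so at an interior spacetime minimum inside $\Omega \times (t_\delta, T)$. At such a point the transport term vanishes, $v_1$ is small, and $v_1 (v_1)_{ss}$ is large; thus the reaction $\mc R \log v_1$ is positive and at least of order $|L|/v_1^2$, while $\mc E$ is bounded using Lemma~\ref{lemma-initial} ($v_b \geq \delta$ for $b \geq 2$), Lemma~\ref{C1 estimate}, Corollary~\ref{Hessian Type-I}, and Corollary~\ref{cor-vmin} ($v_1 \gtrsim \sqrt{T-t}$) by a quantity of strictly smaller order as $v_1 \to 0$. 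This forces $(\partial_t - \partial_{ss}) L > 0$ at the putative minimum, a contradiction.

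The main obstacle is the cross-fiber error $\mc E$, which is absent in the single-warped setting of~\cite{AK04}. Its two sources are the non-Euclidean Laplacian $\Delta v_1 = (v_1)_{ss} + \sum_b n_b (v_1)_s (v_b)_s/v_b$ and the nontrivial commutator $[\partial_t,\partial_s] = -\sum_a n_a ((v_a)_{ss}/v_a)\,\partial_s$ from~\eqref{Commutator}. The strategy is to use the fiber lower bounds $v_b \geq \delta$ for $b \geq 2$ to convert each factor $1/v_b$ into a harmless constant, and Corollary~\ref{Hessian Type-I} to bound $(v_b)_{ss}$ by $C/\sqrt{T-t}$; the resulting bound on $\mc E$ must then be absorbed by the favorable reaction $\mc R |\log v_1|$, which diverges like $|L|/v_1^2 \gtrsim |L|/(T-t)$ once $L$ is very negative, exactly where control is required. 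Verifying this absorption is the delicate step; the specific form of $L$, with its logarithmic factor, is chosen precisely to make this comparison work under Assumption~\ref{small gradient}.
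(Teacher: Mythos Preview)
Your overall plan---compute the evolution of $L$, control the parabolic boundary of $\Omega\times(t_\delta,T)$, and apply the maximum principle---matches the paper. The gap is in the endgame: you attempt to absorb the cross-fiber error $\mc E$ into the reaction at a single putative interior minimum, and that comparison does not close.

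Concretely, the worst cross-fiber contribution is $-2v_1(\log v_1)\sum_{b>1}(v_1)_s(v_b)_s(v_b)_{ss}/v_b^2$, which by Lemma~\ref{lemma-initial}, Lemma~\ref{C1 estimate}, and Corollary~\ref{Hessian Type-I} is bounded below only by $-C(T-t)^{-1/2}$, a function of $t$ alone. The favorable reaction, on the other hand, is $-2\,\dfrac{(v_1)_{ss}}{v_1}\big(L+\tfrac{\mu_1}{2}\big)$; at a point where $L_{\min}=-M$ this is of size roughly $2M^2/\big(v_1^2|\log v_1|\big)$, which depends on the value of $v_1$ \emph{at the spatial minimum of $L$}. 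Nothing pins that value down: inside $\Omega$ one only knows $v_1<\delta$, and your claimed inequality $|L|/v_1^2\gtrsim |L|/(T-t)$ has the direction reversed (Corollary~\ref{cor-vmin} gives $v_1^2\geq c(T-t)$, not $\leq$). If the minimum of $L$ occurs where $v_1$ is of order $\delta$, the reaction is bounded for each fixed $M$ while $\mc E\to\infty$ as $t\to T$, so no pointwise contradiction results. Your boundary claim that $|(v_1)_{ss}|\leq C_\delta$ on $\{v_1=\delta\}$ has the same defect: Corollary~\ref{Hessian Type-I} only gives $|(v_1)_{ss}|\leq C(T-t)^{-1/2}$.

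The paper does not try to beat $\mc E$ with the reaction pointwise. After the term-by-term estimates it arrives at
\[
\frac{\mr d}{\mr dt}L_{\min}(t)\;\geq\;-\frac{C}{\sqrt{T-t}}\;-\;2\,\frac{(v_1)_{ss}}{v_1}\Big(L_{\min}+\frac{\mu_1}{2}\Big),
\]
observes that whenever $L_{\min}<-\mu_1/2$ the second term is nonnegative and may be discarded, and then \emph{integrates} the remaining inequality $\tfrac{\mr d}{\mr dt}L_{\min}\geq -C(T-t)^{-1/2}$ over $[t_\delta,T)$. The integrability of $(T-t)^{-1/2}$ is what delivers the uniform lower bound; this is the step your plan is missing.
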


\begin{proof}
Corollary~\ref{Hessian Type-I} and Corollary~\ref{cor-vmin} imply $L \ge C\, \log(T-t_{\delta})/\sqrt{T - t_{\delta}}$, at $t = t_{\delta}$.
The definition of $\Omega$ guarantees $L = 0$ at the boundary of each component of $\Omega$ for all times $t_{\delta} < t < T$.
To complete the proof, we need to show that $L$ is bounded from below at all interior points for all $t\in [t_{\delta},T)$.
We show this by applying the maximum principle to the evolution of $L$, using the facts that $(v_1)_{ss} > 0$ and $v_1 < \delta$ inside $\Omega$.

A straightforward but tedious computation yields
\[
\frac{\partial}{\partial t} L = \Delta L -\frac{2\, L_s(v_1)_s}{v_1}\Big(2 + (\log v_1)^{-1}\Big) + \mc N,
\]
where
\begin{align*}
\mc N &:= -\frac{2\mu_1 (\log v_1) (v_1)_s^2}{v_1^2} - \frac{2(\log v_1)(v_1)_s^4}{v_1^2} + 2v_1(\log v_1) \sum_b \frac{n_b (v_1)_s (v_b)_s^3}{v_b^3} \\
&\qquad- \frac{\mu_1 \, (v_1)_{ss}}{v_1} + \frac{4(v_1)_s^2(v_1)_{ss}}{v_1} + \frac{2(v_1)_s^2 (v_1)_{ss}}{v_1\, \log v_1}
	+ (8 - 4n_1)\, \frac{(\log v_1) (v_1)_s^2 (v_1)_{ss}}{v_1}\\
&\qquad-2\, v_1(\log v_1)(v_1)_{ss}\sum_{b>1} \frac{(v_b)_s^2}{v_b^2} - 2(\log v_1)(v_1)_{ss}^2 - 2v_1(\log v_1)\sum_{b>1} \frac{(v_1)_s(v_b)_s(v_b)_{ss}}{v_b^2}.
\end{align*}

We now derive several inequalities. Using that $\log v_1 < 0$ and $(v_1)_{ss} > 0$ in $\Omega$, we estimate that
\begin{multline*}
-\frac{2\mu_1\, \log v_1 (v_1)_s^2}{v_1^2} - \frac{2\log v_1 (v_1)_s^4}{v_1^2} + \frac{2\,n_1\, \log v_1\, (v_1)_s^4}{v_1^2} \\
= -\frac{2\, \log v_1}{v_1^2} (v_1)_s^2\Big(\mu_1 - (n_1 - 1)\, (v_1)_s^2\Big) \ge 0.
\end{multline*}
Furthermore, we may estimate that
\[2\, v_1(\log v_1)\sum_{b>1}  \frac{n_b (v_1)_s (v_b)_s^3}{v_b^3} \ge -C\]
for a uniform constant $C>0$, since $v_b \ge \eta > 0$ for all $b \in \{2,\dots A\}$.
We also have
\[\frac{4(v_1)_s^2(v_1)_{ss}}{v_1} + \frac{2(v_1)_s^2 v_{ss}}{v_1\,\log v_1} + (8 - 4n_1)\, \frac{\log v_1 (v_1)_s^2 (v_1)_{ss}}{v_1} \ge 0\]
if $\delta > 0$ is small enough. Moreover, since $n_1 \ge 2$, we have
\[-2v_1(\log v_1)\sum_{b>1}  \frac{(v_1)_s(v_b)_s(v_b)_{ss}}{v_b^2} \ge -\frac{C}{\sqrt{T-t}},\]
where we have used Lemma~\ref{lemma-initial}, Corollary~\ref{Hessian Type-I}, and Lemma~\ref{lemma-TypeI}.

Combining the inequalities derived above yields
\[\frac{d}{dt}L_{\min} \ge -C (T-t)^{-1/2} - 2\, \frac{(v_1)_{ss}}{v_1}\, \Big(L_{\min} + \frac{\mu_1}{2}\Big).\]
This implies at any interior point that either $L_{\min}(t) \ge -\frac{\mu_1}{2}$ or else that
\[\frac{d}{dt}\, L_{\min} \ge -C (T-t)^{-1/2},\]
which also implies a lower bound on $L_{\min}(t)$, since the right-hand side is integrable.
\smallskip

Because $L=0$ at each point of $\partial\Omega$ and $L_{\min}(t_{\delta}) \ge C\, \log(T-t_{\delta})/\sqrt{T-t_{\delta}}$, this complete the proof.
\end{proof}

With Theorem~\ref{Neck} in hand, we are now ready to complete the following:

\begin{proof}[Proof of Main Theorem~\ref{Main-1}]
By Remark~\ref{Almost cylinder}, we have a limit $\big(\mc K^{N_{\mr{gs}}} \times \mb R^{N_{\mr{fl}}}, g_\infty\big)$,
where $\mc K^{N_{\mr{gs}}}$ is a nonflat gradient shrinking soliton of dimension $N_{\mr{gs}}=1+n_1$. 
By the proof of Theorem~\ref{shrinker-limit}, the sectional curvatures of the limit $\mc K^{N_{\mr{gs}}}$ are convex linear combinations of the limits of
\[
\kappa_0:=-\frac{(v_1)_{ss}}{v_1}\qquad\mbox{ and }\qquad\kappa_1:=\frac{1-(v_1)_s^2}{v_1^2}.
\]

We recall the density $\phi$ of $s$ defined in~\eqref{density}. 
The commutator~\eqref{Commutator} shows that $\phi$ of $s$ is increasing in $\Omega$, hence that the diameter
of $\Omega$ is increasing. It follows that the nonflat gradient shrinking soliton $\mc K^{N_{\mr{gs}}}$ is noncompact, hence that it has the topology of
$\mb R\times\mb S^{n_1}$. Estimate~\eqref{F-estimate} implies that on that soliton, $(\kappa_0)_\infty=0$ and $(\kappa_1)_\infty$ is constant and nonzero in space.
Therefore, the limit must be the shrinking cylinder soliton; \emph{i.e.,} the Gaussian metric on $\mb R\times\mb S^{n_1}$.

More precisely, Theorem~\ref{Neck} implies the bound
\begin{equation}		\label{F-estimate}
|\kappa_0|\leq\frac{C}{v_1^2\,|\log v_1|}.
\end{equation}
Because $v_1\searrow0$ as $t\nearrow T$, this implies upon parabolic rescaling that $|\tilde\kappa_0|\to 0$ in the rescaled region $\tilde\Omega$.
With this estimate in hand, the asymptotics are proved exactly as in Section~9 of~\cite{AK04}. We omit further details.
This completes the proof.
\end{proof}

\section{Multiply warped products over closed surfaces}		\label{Generalized cylinders-2}

In this section, we consider multiply warped products over a base manifold $\mc B^2$ that is a general closed surface, and we 
prove Theorem \ref{Main-2} and Corollary \ref{cor-n1-2}, using estimates we proved in Section \ref{Elementary estimates}.
Recall that, in our choice of gauge, the Ricci flow system~\eqref{RF gauged system} over $\mc B^2$ becomes
\begin{align*}
\partial_t\check g	&= -\check R\check g+2\sum_a n_a \cv w_a \ten \cv w_a,\\
\partial_t w_a		&=\check\Delta w_a-\mu_ae^{-2 w_a}. 
\end{align*}

\begin{lemma}		\label{lemma-2Dbase-TypeI}
Consider a Ricci flow solution over $\mc B^2$ that evolves from initial data satisfying
Assumptions~\ref{single fiber pinching}, \ref{eta tame}, and~\ref{guarantee cylinder}.
Then there exist a singularity time $T<\infty$ and a uniform constant $c$ so that
\[\min v_a(\cdot,t) \ge c\sqrt{T-t}\]
for all fibers $1 \le a \le A$ and all times $0\leq t< T$.
\end{lemma}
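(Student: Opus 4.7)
The plan is to adapt the proof of Lemma~\ref{lemma-TypeI} to a two-dimensional base, replacing arclength derivatives by base gradient and Laplacian and invoking Assumption~\ref{eta tame} (via Theorem~\ref{Hessian bound}) to ensure the flow remains smooth up to the first time some $v_a$ vanishes.

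First, under Assumption~\ref{single fiber pinching}, Lemma~\ref{lemma-initial} supplies a time $T<\infty$ with $\liminf_{t\to T}\min v_1(\cdot,t)=0$ and $v_a\geq\delta>0$ on $[0,T)$ for all $a\in\{2,\dots,A\}$. Combined with Theorem~\ref{Hessian bound}, this establishes that $T$ is the actual maximal smooth existence time. I would then set $Q:=2\sum_a n_a\log v_a$ and use the fiber equation~\eqref{RF fibers} together with the Laplacian formula~\eqref{Laplacian} to derive the dimension-free identity
\[
(\partial_t-\Delta)Q=-\sum_a\frac{2n_a\mu_a}{v_a^2},
\]
in which the cross-terms $\lp\check\cv v_a,\check\cv v_b\rp$ produced by $\Delta v_a$ cancel exactly against those produced by $\Delta\log v_a$. (Equivalently, one could follow the 1D route via the scalar curvature identity~\eqref{1-dim scalar} augmented by an extra $\check R$ term, using the spatial-minimum constraint $\sum_a n_a\check\cv v_a/v_a=0$ to kill cross terms, together with $\mu_a\geq n_a-1$ and $R\geq r_0$ from Assumption~\ref{guarantee cylinder}.) Since $Q$ is constant on fibers, any spatial minimum projects to a base minimum at which $\Delta Q\geq 0$, so the maximum principle yields
\[
\tfrac{\mr d}{\mr dt}Q_{\min}(t)\leq -\frac{2n_1\mu_1}{v_1(x_0,t)^2},
\]
where $x_0(t)$ is the minimizing point.

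Setting $P:=v_1^{n_1}\cdots v_A^{n_A}=e^{Q/2}$, the two-sided control $\delta\leq v_a\leq(v_a)_{\max}(0)$ for $a\geq 2$ (from Lemma~\ref{lemma-initial} and Lemma~\ref{C0 estimates}) gives $P\asymp v_1^{n_1}$, hence $v_1(x_0,t)^2\leq C\,P_{\min}(t)^{2/n_1}$. Multiplying the $Q_{\min}$ inequality by $P_{\min}/2$ yields the ODE comparison
\[
\tfrac{\mr d}{\mr dt}P_{\min}\leq -c\,P_{\min}^{1-2/n_1},
\]
whose integration from $t$ to $T$ with $P_{\min}(T)=0$ gives $P_{\min}^{2/n_1}\geq c'(T-t)$, hence $v_1^2\geq c''(T-t)$. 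For $a\geq 2$, the bound $v_a\geq\delta\geq c'''\sqrt{T-t}$ follows immediately from $T<\infty$.

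The main delicate point, relative to the one-dimensional case, is ensuring that the flow does not develop a spurious singularity before $T$ driven by blowup of the base curvature $\check R$; this is precisely the role of Assumption~\ref{eta tame} via Lemma~\ref{lemma-checkR-upper} and Theorem~\ref{Hessian bound}. The rest is a bookkeeping translation of the 1D proof, with the key identity being the dimension-free evolution $(\partial_t-\Delta)Q=-\sum_a 2n_a\mu_a/v_a^2$.
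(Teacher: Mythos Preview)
Your argument is correct, and in fact cleaner than the paper's. The key identity you use,
\[
(\partial_t-\Delta)\log v_a = -\,\mu_a v_a^{-2}
\qquad\Longrightarrow\qquad
(\partial_t-\Delta)Q = -\sum_a \frac{2n_a\mu_a}{v_a^2},
\]
holds verbatim with the full Laplacian $\Delta$ of $g$ (the cross-terms coming from~\eqref{Laplacian} do cancel exactly, as you say), and the maximum principle applied at a spatial minimum of $Q$---where $\nabla Q=0$ forces $\Delta Q=\check\Delta Q\ge 0$---immediately yields the differential inequality for $P_{\min}$. The rest is the standard ODE comparison, and your use of Theorem~\ref{Hessian bound} to ensure the flow survives until $v_1\to 0$ is the correct place where Assumption~\ref{eta tame} enters.

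The paper takes a different route: it works with $\check\Delta$ rather than $\Delta$, so the evolution of $\mc Q$ picks up the cross-term $\sum_{a,b}n_an_b\lp\cv w_a,\cv w_b\rp$, which is then rewritten via the scalar curvature identity as $\mc Q_t=\check R - R -\sum_a n_a(|\cv w_a|^2+\mu_a v_a^{-2})+\tfrac12\lp\cv w_a,\cv\mc Q\rp$. At a spatial minimum this gives $\tfrac{d}{dt}\mc Q_{\min}\le\check R - R - n_1\mu_1 v_1^{-2}$, and one then has to invoke the lower bound $R\ge r_0$ from Assumption~\ref{guarantee cylinder} and the upper bound $\check R\le\max\{C_0,\tfrac{2\mu_1}{3v_1^2}\}$ from Lemma~\ref{lemma-checkR-upper} (hence Assumption~\ref{eta tame} a second time) to absorb those terms, plus the smallness $v_1\le c_1$ to make the resulting constant negative. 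Your approach sidesteps all of this: by using the full $\Delta$ you never see $\check R$ or $R$ in the evolution of $Q$, so Assumption~\ref{guarantee cylinder} is not actually needed for the lower bound $\min v_1\ge c\sqrt{T-t}$ itself, and Assumption~\ref{eta tame} is used only once, to guarantee long-time existence via Theorem~\ref{Hessian bound}. The paper's route has the (minor) virtue of paralleling the one-dimensional argument in Lemma~\ref{lemma-TypeI} more visibly, but yours is both shorter and proves a slightly stronger statement.
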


\begin{proof}

We define $\mc Q:=2\sum_a n_a w_a$ and note that it follows from~\eqref{RF fibers} that
\[
	(\partial_t-\check\Delta)\mc Q = 2\sum_{a}n_a\Big(-\mu_a v_a^{-2}+ \sum_b n_b\lp\cv w_a,\cv w_b \rp \Big).
\]
We recall from Appendix~\ref{Curvature formulas} that the Ricci curvatures are
\[
	R_{ij} = \tfrac{1}{2}\check R \check g_{ij} - \sum_a n_a v_a^{-1}(\check \cv^2 v_a)_{ij},
\]
and for each fiber $\mc F_a$ with $a\in\{1,\dots,A\}$,
\[
	R_{\alpha\alpha} = \Big(- n_a v_a^{-1}\check\Delta v_a 
	+ \mu_a v_a^{-2} - (n_a-1) |\cv w_a|^2
	- \sum_{b\neq a} n_b \lp \cv w_a, \cv w_b \rp\Big)(g_a)_{\alpha \alpha}.
\]
Thus we see that
\[
	R = \check R - 2\sum_{a} n_a v_a^{-1}\check \Delta v_a
	+ \sum_a n_a\mu_a v_a^{-2}
	+ \sum_a n_a |\cv w_a|^2
	- \sum_{a, b} n_a n_b \lp \cv w_a, \cv w_b \rp.
\]

Since $\check\Delta w_a = v_a^{-1}\check\Delta v_a - |\cv w_a|^2$, we have
\begin{align*}
	\mc Q_t &= 2\sum_a n_a v_a^{-1}\check\Delta v_a - n_a|\cv w_a|^2
	- n_a\mu_a v_a^{-2} + \sum_b n_an_b\lp \cv w_a, \cv w_b \rp\\ 
	&= \check R - R - \sum_a n_a \Big(|\cv w_a|^2 + \mu_a v_a^{-2}- \tfrac{1}{2}\lp \cv w_a, \cv\mc Q\rp \Big).
\end{align*}
At any point in space where $\mc Q_{\min}(t)$ is attained, we have $\cv \mc Q=0$ and thus
\[
	\frac{\mr d}{\mr dt}\mc Q_{\min}
	\leq \check R - R - n_1\mu_1 v_1^{-2}.
\]

Because $R_t=\Delta R+2|\Rc|^2\geq\Delta R$, we have $R\geq r_0\in\mb R$ for as long as the solution remains smooth.
It follows from Lemma~\ref{lemma-checkR-upper} that $\check R\le \max\big\{C_0, \frac{2\mu_1}{3v_1^2}\big\}$).

Let $\mc P=e^{\mc Q/2}=v_1^{n_1}\cdots v_A^{n_A}.$
Then we have
\[
\frac{\mr d}{\mr dt}\mc P_{\min}
	\leq  \tfrac{1}{2} v_1^{n_1-2}\cdots v_A^{n_A}
	\left(- n_1\mu_1  + \tfrac{2}{3}\mu_1 - r_0v_1^2  \right).
\]
It follows from Lemma~\ref{C0 estimates} $v_1(\cdot,t) \le \max v_1(\cdot, 0) =c_1$.
Thus for $c_1$ sufficiently small, depending on $r_0$ and $n_1\mu_1$, there exists $c_2>0$ such that $\frac12(r_0 v_1^2+n_1\mu_1-\frac{2}{3}\mu_1\big)\geq c_2$.
Furthermore, by the upper bounds given by Lemma~\ref{C0 estimates} and the lower bounds from Lemma~\ref{lemma-initial}, 
the product $\mc P$ is comparable to $v_1^{n_1}$. Thus there exists $c_3>0$ such that
\[
\frac{\mr d}{\mr dt}\mc P_{\min}\leq-c_2\,v_1^{n_1-2}v_2^{n_2}\cdots v_a^{n_A}\leq -c_3\,\mc P^{1-\frac{2}{n_1}}.
\]
Integrating this from $t$ to $T$ and using $\mc P_{\min}(T)=0$ and the fact that $\mc P^{\frac{1}{n_1}}$ and $v_1$ are comparable, there exist $c_4,c_5>0$ such that
\[
\mc P(t)^{\frac{2}{n_1}}\geq c_4(T-t)\qquad\Rightarrow\qquad v_1^2(\cdot,t)\geq c_5(T-t).
\]
This completes the proof.
\end{proof}

With this result in hand, one obtains an exact analog of Corollary~\ref{cor-vmin} by replacing Lemma~\ref{lemma-TypeI} with
Lemma~\ref{lemma-2Dbase-TypeI}, yielding the following conclusion:

\begin{corollary}		\label{cor-2D-vmin}
Assume a solution over $\mc B^2$ originates from initial data that satisfy Assumptions~\ref{single fiber pinching}, \ref{eta tame}, and~\ref{guarantee cylinder}.
Then there exist uniform constants $0<c<C<\infty$ such that $c\,(T-t) \le v_{\min}^2(t) \le (T - t)$ for all $t\in [0,T)$.
\end{corollary}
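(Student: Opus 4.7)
The plan is essentially to transcribe the argument of Corollary~\ref{cor-vmin} with Lemma~\ref{lemma-2Dbase-TypeI} playing the role that Lemma~\ref{lemma-TypeI} played in the one-dimensional base setting; once that lemma is established, the rest is mechanical. In broad outline, I would first observe that all the hypotheses of Lemma~\ref{lemma-2Dbase-TypeI} are met, and then extract the lower bound directly from its conclusion, while the upper bound comes from Lemma~\ref{C0 estimates} applied to the first fiber to crush.

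Concretely, the lower bound is immediate: Lemma~\ref{lemma-2Dbase-TypeI} yields a uniform $c>0$ with $\min_x v_a(x,t)\ge c\sqrt{T-t}$ for every $a\in\{1,\dots,A\}$ and every $t\in[0,T)$. Taking the minimum over $a$ gives $v_{\min}(t)\ge c\sqrt{T-t}$, hence $v_{\min}^2(t)\ge c^2(T-t)$ after relabeling the constant. For the upper bound, Assumption~\ref{single fiber pinching} together with Lemma~\ref{lemma-initial} guarantees that $\mc F_1$ is the first fiber to be crushed, so the time $T_1$ in Lemma~\ref{C0 estimates} coincides with the singular time $T$; that lemma then gives $(v_1)_{\min}^2(t)\le 2\mu_1(T-t)$, and since $v_{\min}(t)\le (v_1)_{\min}(t)$, we obtain $v_{\min}^2(t)\le C(T-t)$ with $C=2\mu_1$.

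There is no real obstacle here beyond verifying that the hypotheses in force indeed allow us to invoke Lemma~\ref{lemma-2Dbase-TypeI} and that the fiber selected by Assumption~\ref{single fiber pinching} is the one controlling $v_{\min}$ near $T$. The only subtlety worth double-checking, and the step most likely to require a small remark, is that the constant in front of $(T-t)$ on the right-hand side of Corollary~\ref{cor-vmin} (and hence of Corollary~\ref{cor-2D-vmin}) should be read as a uniform $C<\infty$ depending only on the initial data and on $\mu_1$, so the stated bounds $c(T-t)\le v_{\min}^2(t)\le C(T-t)$ hold after an obvious renormalization of constants.
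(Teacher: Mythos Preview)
Your proposal is correct and takes essentially the same approach as the paper, which simply states that Corollary~\ref{cor-2D-vmin} follows by replacing Lemma~\ref{lemma-TypeI} with Lemma~\ref{lemma-2Dbase-TypeI} in the proof of Corollary~\ref{cor-vmin}. Your remark about the upper-bound constant is well taken, since Lemma~\ref{C0 estimates} yields $(v_1)_{\min}^2\le 2\mu_1(T-t)$ rather than $(T-t)$.
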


We are now ready to prove Theorem \ref{Main-2}.

\begin{proof}[Proof of Theorem \ref{Main-2}]
Recalling the two-sided bounds for $\check R$ given by Lemmas~\ref{lemma-checkR-upper} and~\ref{lemma-checkR-lower}, using
Lemma~\ref{lemma-2Dbase-TypeI} and Corollary~\ref{cor-2D-vmin}  now allows us to invoke Corollary~\ref{Type-I}, Theorem~\ref{shrinker-limit}, and
Corollary~\ref{warped-product-limit} from Section~\ref{Elementary estimates}, yielding a direct product limit
\[
\big(\mc K^{N_{\mr{gs}}} \times \mb R^{N_{\mr{fl}}}, g_\infty\big),
\]
where $g_{\infty} = g_{\mc K_{N_{\mr{gs}}}} + g_{fl}$, and $(\mc K^{N_{\mr{gs}}}, g_{K_{\mr{gs}}}) $ is a nonflat gradient shrinking soliton of dimension $N_{\mr{gs}}=2+n_1$ with a warped product structure over a complete noncompact two dimensional base $\tilde{\mc B}$ on $K_{\mr{gs}} = \tilde{\mc{B}}\times S^{n_1}$. Here $N_{fl} = n_2 + \dots n_A$.
\end{proof}

We now prove Corollary \ref{cor-n1-2}.

\begin{proof}[Proof of Corollary \ref{cor-n1-2}]
Let us assume that $n_1 = 2$. Without loss of generality, the Assumptions can be arranged so that the fiber that crushes is $\mathbb{S}^2$. By Theorem \ref{Main-2}, we know that  our singularity is Type-I, and after Type-I rescaling, our singularity model is  $\big(\mc K^{4} \times \mb R^{N_{\mr{fl}}}, g_\infty\big)$, with a direct product metric $g_{\mc{K}^4} + g_{\rm{eucl}}$, where $N_{fl} = n_2 + \dots n_A$. Here $g_{\rm{eucl}}$ is the Euclidean metric on the factor $\mb R^{N_{\mr{fl}}}$, $\mc{K}^4 = \tilde{\mc B} \times \mb{S}^2$, and $g_{\mc{K}^4} $ is a warped product metric over a two-dimensional base $\tilde{\mc{B}}$. By Theorem \ref{thm-classification}, we know that the shrinker $(\mc{K}^4, g_{\mc K^4})$ is isometric to a generalized cylinder $\mb R^2\times\mb S^2$ with a standard cylindrical metric, as claimed. 
 \end{proof}

\appendix

\section{Geometric basics}		\label{Geometric basics}

\subsection{Metrics}

Here we review the geometry of a mutiply warped product manifold
\[
\Big(\mc M^N:=\mc B^n\times\mc F_1^{n_1}\times\cdots\times\mc F_A^{n_A},\;g:=g_{\mc B} + \sum_{a=1}^A v_a^2\,g_{\mc F_a}\Big),
\]
where $N:=n+\sum n_a$.

If we work in local coordinates, we use lowercase Roman indices $i,j,k\in\{1,\dots,n\}$ on the base and Greek indices
$\alpha,\beta,\gamma\in\{1,\dots,n_a\}$ in each fiber $a\in\{1,\dots,A\}$. We use uppercase Roman indices to range over all possible values.

\subsection{Connections}
Using the convention explained above, one finds that the Levi--Civita connection of~\eqref{MWP metric} has a block structure determined by
\begin{subequations}		\label{Connection}
\begin{align}
\Gamma_{ij}^k&=\check\Gamma_{ij}^k,\\
(\Gamma_a)_{\alpha\beta}^k&=-\frac{\partial_k v_a}{v_a}(g_a)_{\alpha\beta},\qquad
(\Gamma_a)_{i\beta}^\gamma=\frac{\partial_iv_a}{v_a}\,\delta_\beta^\gamma,\\
(\Gamma_a)_{\alpha\beta}^\gamma&=(\hat\Gamma_a)_{\alpha\beta}^\gamma.
\end{align}
\end{subequations}

\subsection{A few elementary formulas}

Let $\vp:\mc B\rightarrow\mb R$ be any smooth function. By recalling some facts from~\cite{CIKS22}, we note the following basic identities:
\begin{align}
|\cv\vp|^2  &= |\check\cv\vp|_{\check g}^2\,,\\
\nabla^2 \vp &= (\check \nabla^2) \vp
	+ \sum_a	\frac{\lp\cv v_a,\cv\vp\rp}{v_a}g_a,						\label{Hessian}\\
\Delta \vp	&= \check\Delta\vp+\sum_a n_a \frac{\lp\cv v_a,\cv\vp\rp}{v_a},		\label{Laplacian}\\
|\cv^2\vp|^2	&= |\check\cv^2\vp|^2_{\check g} + \sum_a n_a \frac{\lp\cv v_a\cv\vp\rp^2}{v_a^2}.	\label{Tensor norm}
\end{align}

\subsection{Curvatures}		\label{Curvature formulas}
Let $\mc A=\{1,\dots,A\}$. Here, we unpack Lemma~33 of~\cite{CIKS22}, where capital Roman indices range over all possible values and we employ
the conventions that $R_{IJKL}=R_{IJK}^M\,g_{LM}$ and hence that
\[
(g_\alpha \KN g_\beta)_{\sigma\tau\nu\omega} = (g_\alpha)_{\sigma\omega}(g_\beta)_{\tau\nu} + (g_\alpha)_{\tau\nu}(g_\beta)_{\sigma\omega}
	- (g_\alpha)_{\sigma\nu}(g_\beta)_{\tau\omega} - (g_\alpha)_{\tau\omega}(g_\beta)_{\sigma\nu}.
\]

In this way, we find that the curvature operator has four flavors of components encoding its sectional curvatures.
To be precise, $R_{ABCD}=\Rm(e_A \wedge e_B,\,e_D \wedge e_C)$ depends on the planes $e_A \wedge e_B$ and $\,e_C\wedge e_D$ as follows:
\begin{subequations}
\begin{align}
	&\mbox{(a) Base paired with base:}	\notag\\
R_{ijk\ell}		&= \check R_{ijk\ell},	\label{base-base} \\ \notag\\
	&\mbox{ (b) Fiber paired with itself:}		\notag\\
R_{\alpha\beta\gamma\delta}	&\in\Big\{a\in\mc A\colon v_a^2(\hat R_a)_{\alpha\beta\gamma\delta}
	-v_a^{-2}|\cv v_a|^2\big((g_a)_{\alpha\delta}(g_a)_{\beta\gamma}-(g_a)_{\alpha\gamma}(g_a)_{\beta\delta}\big)\Big\}, \label{fiber-self}\\ \notag\\
	&\mbox{ (c) Fiber paired with a distinct fiber:}	\notag\\
R_{\alpha\beta\gamma\delta}	&\in\Big\{a\neq b\in\mc A\colon
	-v_a^{-1}v_b^{-1}\lp\cv v_a,\cv v_b\rp\,(g_a)_{\alpha\delta}(g_b)_{\beta\gamma}\Big\}, \label{fiber-other}\\ \notag\\
	&\mbox{(d) Base paired with fiber:}		\notag\\
R_{i\beta\gamma\ell}	&\in\Big\{a\in\mc A\colon  -v_a^{-1}(\check\cv^2v_a)_{i\ell}\,(g_a)_{\beta\gamma}\Big\}. \label{base-fiber}
\end{align}
\end{subequations}

These formulas become more tractable if each fiber is a space form.

\subsubsection*{Caution} We warn the reader that, while $R_{ijk\ell}=\check R_{ijk\ell}$, the situation with Ricci curvature is not so simple. Indeed, one sees easily that
\[
R_{ij}=g^{AB}R_{iABj}=\check g^{k\ell}R_{ik\ell j}+\sum_a (g_a)^{\alpha\beta}R_{i\alpha\beta j}=\check R_{ij}-\sum_a n_a v_a^{-1}(\check\cv^2 v_a)_{ij}.
\]

\section{Hessian evolution}		\label{Hessian evolution}
Here, we compute the evolution of the covariant Hessian of a general warping function $v_a$.
To begin our derivation, we recall the evolution equation~\eqref{RF fibers} for $v_a$.  Denoting the Lichnerowicz Laplacian by $\Delta_\ell$,
we then have the commutator
\begin{equation}		\label{Forcing term}
(\partial_t-\Delta_\ell)\cv^2v_a=\cv^2(\partial_t-\Delta)v_a=-\cv^2 Z_a,
\end{equation}
where
\begin{equation}		\label{Define Z}
Z_a:= \frac{\mu_a+|\cv v_a|^2}{v_a}.
\end{equation}

We next convert the Lichnerowicz Laplacian into the rough Laplacian using
\[
(\Delta_\ell\cv^2 v_a)_{IJ} = (\Delta\cv^2 v_a)_{IJ} +2R_{IPQJ}(\cv^2 v_a)^{PQ}-2(\Rc*\cv^2 v_a)_{IJ}.
\]
We do not need to expand the Ricci terms because they cancel terms that appear in the evolution of $g^{-1}$
where we compute $(|\cv^2 v_a|)_t$.

We schematically decompose
\[
R_{IPQJ}(\cv^2 v_a)^{PQ} = A_{ij}+B_{\sigma\tau}+C_{\sigma\tau}+D_{\sigma\tau}
\]
\begin{align*}
A_{ij}	 		&:= R_{iPQj}(\cv^2v_a)^{PQ},\\
B_{\sigma\tau}	&:= R_{\sigma k\ell\tau}(\cv^2 v_a)^{k\ell},\\
C_{\sigma\tau}	&:=\sum_b v_b^2(\hat R_b)_{\sigma\nu\omega\tau}(\cv^2 v_a)^{\nu\omega},\\
D_{\sigma\tau}
	&:= \Big(-\frac12\sum_b\sum_c v_b^{-1}v_c^{-1} \lp\cv v_b,\cv v_c\rp (g_b \KN g_c)_{\sigma\nu\omega\tau}\Big)
		\Big(\sum_d v_d^{-1}\lp\cv v_d, \cv v_a\rp(g_d)^{\nu\omega}\Big).
\end{align*}
In the derivation, it is useful to recall the curvature formulas from Appendix~\ref{Curvature formulas}.

By using formula~\eqref{Hessian} and the curvatures where at least one plane is tangent to the base, we compute
\begin{align}
A_{ij}		
		&= R_{ik\ell j}(\cv^2v_a)^{k\ell}+\sum_b (R_b)_{i\tau\nu j}(\cv^2 v_a)^{\tau\nu},	\notag\\
		&= \check R_{ik\ell j}(\check\cv^2v_a)^{k\ell}
		-\Big(\sum_b v_b^{-1}(\check\cv^2 v_b)_{ij}(g_b)_{\tau\nu}\Big)
		\Big(\sum_c v_c^{-1}\lp\cv v_c,\cv v_a\rp(g_c)^{\tau\nu}\Big)	\notag\\
		&=\check R_{ik\ell j}(\check\cv^2v_a)^{k\ell}
		-\sum_b n_b v_b^{-2}\lp\cv v_b,\cv v_a\rp\,(\check\cv^2 v_b)_{ij}.	\label{A}
\end{align}

Similarly, by using the same formulas, we get
\begin{align}
B_{\sigma\tau}	
	&= -\sum_b v_b^{-1}(\check\cv^2 v_b)_{k\ell}(\check\cv^2 v_a)^{k\ell}\,(g_b)_{\sigma\tau}	\notag\\
	&=-\sum_b v_b^{-1}\lp\check \cv^2 v_b,\check\cv^2 v_a\rp_{\check g}\,(g_b)_{\sigma\tau}.		\label{B}
\end{align}

Next, by using the fact that each fiber is Einstein with $\hat{\Rc}_b=\mu_b\,\hat g_b$ and again using formula~\eqref{Hessian} , we obtain
\begin{align}
C_{\sigma\tau}	
	&=\sum_b (\hat R_b)_{\sigma\tau}v_b^{-1}\lp\cv v_b,\cv v_a\rp	\notag\\
	&=\sum_b \mu_b v_b^{-3}\lp\cv v_b,\cv v_a\rp(g_b)_{\sigma\tau}.	\label{C}
\end{align}

Finally, we again have recourse to Lemma~33 of~\cite{CIKS22}, using the observation
\begin{equation}	\label{KN mess}
\frac12(g_d)^{\nu\omega}\sum_b\sum_c(g_b \KN g_c)_{\sigma\nu\omega\tau}
	=(n_d-1)(g_d)_{\sigma\tau} + n_d\sum_{c\neq d}(g_c)_{\sigma\tau}
\end{equation}
to calculate that
\begin{subequations}			\label{D}
\begin{align}
D_{\sigma\tau}
	&=-\sum_b(n_b-1)v_b^{-3}|\cv v_b|^2\lp\cv v_b,\cv v_a\rp (g_b)_{\sigma\tau}\\
	&\quad -\sum_b n_b v_b^{-2} \sum_{c\neq b}v_c^{-1}\lp\cv v_b,\cv v_c\rp \lp\cv v_b,\cv v_a\rp (g_c)_{\sigma\tau}.	
\end{align}
\end{subequations}

After accounting for the evolution of the $g^{-1}$ factors in $|\cv^2v_a|^2$, our work above shows that
\begin{align*}
(|\cv^2 v_a|^2)_t	&=2\Big\lp\cv^2 v_a,\; \Delta\cv^2 v_a+2(A+B+C+D)-\cv^2 Z_a\Big\rp\\
		&=\Delta(|\cv^2 v_a|^2)-2|\cv^3 v_a|^2 - 2\lp\cv^2v_a,\cv^2Z_a\rp + 4\big\lp\cv^2 v_a,\;A+B+C+D\big\rp,
\end{align*}
where $Z_a$ is defined in~\eqref{Define Z}.

We expand the final quantity above using~\eqref{A}--\eqref{D} and collect terms to see that
\begin{subequations}			\label{general chi evolution}
\begin{align}
(|\cv^2 v_a|^2)_t	 &= \Delta(|\cv^2 v_a|^2) - 2|\cv^3 u|^2 -2\lp\cv^2v_a,\cv^2Z_a\rp\\
	&\quad +4\check{\Rm}(\check\cv^2v_a,\check\cv^2v_a) -4\sum_b n_b v_b^{-2}\lp\cv v_b,\cv v_a\rp\lp\check\cv^2 v_b,\check\cv^2 v_a\rp_{\check g}\\
	&\quad +4\sum_b n_b\big\{\mu_b-(n_b-1)|\cv v_b|^2\big\} v_b^{-4}\lp\cv v_b,\cv v_a\rp^2\\
	&\quad -4\sum_b \sum_{c \neq b}
		n_b  n_c\,v_b^{-2} v_c^{-2}\lp\cv v_b,\cv v_c\rp\lp\cv v_b,\cv v_a\rp\lp\cv v_c,\cv v_a\rp.
\end{align}
\end{subequations}
This result is summarized as Lemma~\ref{alt chi evolution} above.

\end{document}